\documentclass[11pt]{article}

\usepackage{amsmath, amssymb, amsthm}
\usepackage{dsfont}
\usepackage{graphicx}
\usepackage{tikz}
\usepackage{float}
\usepackage{enumerate}
\usepackage{hyperref}

\usetikzlibrary{calc}
\usetikzlibrary{shapes, positioning}
\usetikzlibrary{backgrounds}

\usetikzlibrary{positioning,calc,fit,backgrounds}

\allowdisplaybreaks
\expandafter\let\expandafter\oldproof\csname\string\proof\endcsname
\let\oldendproof\endproof
\renewenvironment{proof}[1][\proofname]{%
	\oldproof[\bf #1]%
}{\oldendproof}

\allowdisplaybreaks

\theoremstyle{plain}
\newtheorem{lemma}{Lemma}[section]
\newtheorem{theorem}[lemma]{Theorem}
\newtheorem{claim}[lemma]{Claim}
\newtheorem{proposition}[lemma]{Proposition}

\newtheorem{corollary}[lemma]{Corollary}

\newtheorem{definition}[lemma]{Definition}

\newtheorem*{claim*}{Claim}
\newtheorem{fact}[lemma]{Fact}

\theoremstyle{definition}
\newtheorem{example}[lemma]{Example}
\newtheorem{construction}[lemma]{Construction}

\newcommand{\poly}{\text{poly}}
\RequirePackage[normalem]{ulem} 
\RequirePackage{color}\definecolor{RED}{rgb}{1,0,0}\definecolor{BLUE}{rgb}{0,0,1} 

\newcommand{\ex}{\text{ex}}

\newcommand{\girth}{\text{girth}}
\newcommand{\Rem}{\mathrm{Rem}}

\title{NP-Hardness of the $H$-Free Edge-Deletion Problem}
\author{Lior Gishboliner\thanks{Department of Mathematics, University of Toronto, Canada.
\emph{Email}: \href{mailto:lior.gishboliner@utoronto.ca}{\tt lior.gishboliner@utoronto.ca}. Research supported by an NSERC Discovery Grant.
}
\and Ethan Honest\thanks{Department of Mathematics, University of Toronto, Canada.
\emph{Email}: \href{mailto:lior.gishboliner@utoronto.ca}{\tt ethan.honest@mail.utoronto.ca}.}
}

\date{}

\begin{document}

\maketitle

\begin{abstract}
    \noindent
    For a graph $H$, the {\em $H$-freeness edge-deletion problem} is the algorithmic problem of finding, for an input graph $G$, the minimum number of edges of $G$ whose deletion turns $G$ into an $H$-free graph. We show that for every graph $H$ containing a cycle, this problem is NP-hard. This proves a conjecture of Gishboliner, Levanzov and Shapira, and completes the characterization of the complexity of the $H$-freeness edge-deletion problem, answering a question of Alon, Shapira \nolinebreak and \nolinebreak Sudakov. 
\end{abstract}

\section{Introduction}\label{sec:intro}

Edge-deletion problems are a class of algorithmic problems where the goal is to find the minimum number of edges whose deletion turns the input graph $G$ into a graph satisfying a given graph property $\mathcal{P}$. The systematic study of such problems was initiated by Yannakakis \cite{Yannakakis}, 
who proved the NP-hardness of this problem for several natural graph properties $\mathcal{P}$, and raised the question of proving hardness for large families of graph properties. 

For a (non-empty) graph-family $\mathcal{H}$ each containing at least one edge, and for a graph $G$, we denote by $\Rem_{\mathcal{H}}(G)$ the distance of $G$ to $\mathcal{H}$-freeness, i.e., the minimum number of edges whose deletion turns $G$ into an $\mathcal{H}$-free graph.\footnote{A graph is {\em $\mathcal{H}$-free} if it contains no (not necessarily induced) copies of any $H \in \mathcal{H}$.}
If $\mathcal{H} = \{H\}$, then we write $\Rem_H(G)$. 
A breakthrough result of Alon, Shapira and Sudakov \cite{AlonShapiraSudakov} addressed Yannakakis's question by proving that for every (possibly infinite) family of graphs $\mathcal{H}$ containing no bipartite graph, it is NP-hard to compute $\Rem_{\mathcal{H}}(G)$, and even to approximate this quantity to within an additive error of $n^{2-\varepsilon}$ for $n$-vertex input graphs $G$, for any fixed $\varepsilon > 0$. In the case that $\mathcal{H}$ is finite, this result was reproved and extended to $k$-uniform hypergraphs in \cite{HypergraphEdgeModification}. 

Alon, Shapira and Sudakov \cite{AlonShapiraSudakov} raised the problem of characterizing the graphs $H$ for which the problem of computing $\Rem_H(\cdot)$ is solvable in polynomial time. As mentioned above, their results show that this problem is NP-hard if $H$ is non-bipartite. However, their technique does not apply to bipartite $H$.\footnote{Furthermore, the aforementioned inapproximability result of \cite{AlonShapiraSudakov} does not hold for bipartite $H$, because for every bipartite $H$ there exists $\varepsilon > 0$ such that every $n$-vertex $H$-free graph has at most 
$n^{2-\varepsilon}$ edges (by the K\H{o}v\'ari-S\'os-Tur\'an theorem \cite{KST}). Hence, one can trivially approximate $\Rem_H(G)$ up to additive error of $n^{2-\varepsilon}$ by outputting $e(G)$.} Earlier, 
Yannakakis \cite{Yannakakis} proved that computing $\Rem_H(\cdot)$ is NP-hard if $H$ is a cycle (see also \cite{AlonStav}), and
Watanabe, Ae and Nakamura \cite{WatanabeAeNakamura} proved hardness in the case that $H$ is 3-connected.
Levanzov, Shapira and the first author \cite{gishboliner2024trimming} considered the case where $H$ is a forest, and showed that computing $\Rem_H(\cdot)$ is polynomial-time solvable if $H$ is a star forest (i.e., a forest where every component is a star) and NP-hard otherwise. They conjectured that star forests are the only graphs $H$ for which the problem is polynomial. In other words, what remains is to show that computing $\Rem_H(\cdot)$ is NP-hard whenever $H$ contains a cycle. In this paper we prove this conjecture. This also answers the aforementioned question of Alon, Shapira and Sudakov \cite{AlonShapiraSudakov}.

\begin{theorem}\label{thm:main}
For every graph $H$ containing a cycle, the problem of computing $\Rem_H(\cdot)$ is NP-hard.
\end{theorem}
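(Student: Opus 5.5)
We reduce from a known NP-hard problem by a gadget construction. Since non-bipartite $H$ is covered by Alon, Shapira and Sudakov \cite{AlonShapiraSudakov}, the case of interest is $H$ bipartite containing a cycle. Our starting point is the $k$-cycle case of Yannakakis \cite{Yannakakis}: computing $\Rem_{C_k}(\cdot)$ is NP-hard. More precisely, we aim to reduce from computing $\Rem_{C_g}(G)$ where $g = \girth(H)$ and $G$ ranges over graphs whose girth is at least $g$ (hardness should survive this restriction by inspecting the standard reduction). Alternatively, one could reduce from a covering problem such as vertex cover in cubic graphs, or triangle/cycle packing, where every edge of $G$ lies in a bounded number of $g$-cycles.

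\textbf{Step 1: choose a minimal substructure of $H$.} We fix a shortest cycle $C$ in $H$ of length $g$. Among components, 2-connected blocks and so on, we pick an extremal object: a 2-connected block $B$ of $H$ containing a $g$-cycle. We choose it to maximize, for instance, the number of edges among blocks of girth $g$, together with an appropriate tie-breaking rule. The aim is that any copy of $H$ in the constructed graph must place $B$ onto a specific gadget.

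\textbf{Step 2: the construction.} Given the input $G$, we attach to every $g$-cycle of $G$ (or rather to each edge, uniformly) a copy of $H$ minus $C$, glued along the cycle. Concretely, for each $g$-cycle $Z$ of $G$ we add fresh vertices forming $H$ with $Z$ playing the role of $C$. Each gadget edge is then replaced by a large multiplicity structure: it is blown up to $M$ parallel paths, or the gadget vertices are replaced by independent sets of size $M$. This makes deleting gadget edges never cheaper than deleting an edge of $G$. The claim is that $\Rem_H(G') = \Rem_{C_g}(G)$. The upper bound requires showing that once $G$ becomes $C_g$-free, $G'$ is $H$-free. The lower bound comes from the blow-up: any solution using gadget edges can be converted into one using original edges.

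\textbf{Step 3, the main obstacle: no unintended copies of $H$.} We must show that every copy of $H$ in $G'$ uses a $g$-cycle of $G$ that survives. Copies of $H$ could combine pieces of several gadgets with edges of $G$. To prevent this we use the high girth of $G$ and a careful counting or extremality argument. Any cycle of length $g$ in $G'$ is either a cycle of $G$ or lies inside a single gadget. A copy of the extremal block $B$ must then map into one gadget together with its base cycle, and the maximality choice of Step 1 excludes folding $B$ across two gadgets through the cut-vertices. Handling the non-2-connected parts of $H$, such as trees hanging off the blocks or multiple components, is the delicate point. For these we would count copies via a weighting or an ordering of blocks, using homomorphism-type arguments. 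If the exact equality fails, we would settle for showing that $\Rem_H(G')$ determines $\Rem_{C_g}(G)$ through a monotone, computable formula.
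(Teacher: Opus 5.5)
There is a genuine gap. Step 3 is the whole difficulty of the theorem, and the structural claims you make there are false for your construction.

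Take $H=K_{2,3}$. Then $g=4$, $C=u_1v_1u_2v_2$, and the gadget on a $4$-cycle $Z$ is a single new vertex joined to the two vertices of $Z$ playing $u_1,u_2$. Let $G=K_{2,3}$ with parts $\{x,y\}$ and $\{c_1,c_2,c_3\}$; this graph has girth $4$. Suppose the gadgets $g_{12},g_{13}$ of the $4$-cycles $xc_1yc_2$ and $xc_1yc_3$ are attached at $\{x,y\}$; you give no rule for identifying $C$ with $Z$ that prevents this. Then:
\begin{enumerate}
\item $xg_{12}yc_3$ is a $4$-cycle of $G'$ that lies neither in $G$ nor inside one gadget.
\item $\{x,y,c_1,g_{12},g_{13}\}$ spans a copy of the block $B=H$ spread over two gadgets, contrary to your extremal-block claim.
\item $x$ and $y$ have five common neighbours in $G'$. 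Each edge deletion removes at most one of them, and $K_{2,3}$-freeness allows at most two, so $\Rem_H(G')\ge 3$. But $\Rem_{C_4}(G)=2$, e.g.\ via $E=\{xc_2,xc_3\}$. So the claimed identity fails.
\end{enumerate}

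This is not just a bad local choice. If the $g$-cycles of the source instances were edge-disjoint, $\Rem_{C_g}$ would simply count them. So hard instances must have heavily overlapping $g$-cycles, and overlapping gadgets are exactly what assemble into unintended copies of $H$. Nothing in Steps 1--3 controls this. The pendant trees and multiple components are only gestured at, and your last sentence concedes the target equality may fail. The hardness of $\Rem_{C_g}$ on graphs of girth at least $g$ is also only asserted.

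The blow-up in Step 2 is both unnecessary and harmful. Each gadget edge lies in a unique gadget, so a deleted gadget edge can be charged to an edge of its base cycle, as in Lemma \ref{lemma:easy-inequality}. By contrast, blowing up $v_3$ into $M$ vertices gives $x,y$ about $M$ common neighbours, i.e.\ copies of $K_{2,3}$ using no edge of $G$ at all.

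The missing idea is to reduce from vertex cover on graphs of girth $>2v(H)$ (Lemma \ref{thm:high-girth-VC-is-np-hard}) rather than from $\Rem_{C_g}$. The paper's construction has a hub $s'$ adjacent to $w_1,\dots,w_n$. For each edge $ij$ of $G$ it adds a private copy of $H$ in which $s',w_i,w_j$ play $s,a,b$ for a fixed path $asb$ in $H$. A vertex cover $C$ then corresponds to deleting the hub edges $s'w_i$, $i\in C$.

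Large girth forces every copy of $H$ to live over a forest $P\subseteq G$ (Lemma \ref{thm:g-locally-a-forest}). Then $\{s',w_i\}$ separates that copy whenever $i$ is a cut vertex of $P$. This quickly settles $3$-connected $H$ and cycles. General $2$-connected $H$ is handled by induction along a Tutte decomposition, with $s,a,b$ placed in a leaf, and by counting $2$-cuts when the induction cannot be applied.

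The remaining cases use further reductions:
\begin{enumerate}
\item Connected $H$ with $\delta(H)\ge 2$ uses the block structure. When the cut vertex $z$ of the leaf block $M$ must be $a$ or $b$, a modified construction hangs a single copy of $H-(M\setminus\{z\})$ at each $w_i$.
\item Pendant trees are removed by passing to the $2$-core, gluing $e(G)+1$ copies of each tree at every vertex.
\item Disconnected $H$ is handled via $\Rem_{kC}(kG)=\Rem_{kC}((k-1)G)+\Rem_C(G)$, plus padding with $e(G)+1$ copies of each remaining component.
\end{enumerate}
Your outline contains none of these ingredients, so as it stands it is a heuristic rather than a proof.
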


\begin{corollary}\label{cor:main}
    For a graph $H$ containing at least one edge, computing $\Rem_H(\cdot)$ is polynomial-time solvable if $H$ is a star forest, and is NP-hard otherwise.
\end{corollary}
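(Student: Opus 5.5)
The corollary follows by splitting into cases according to the structure of $H$, and in each case invoking either Theorem~\ref{thm:main} or the results of \cite{gishboliner2024trimming} on forests. No new construction is needed; the work is in making sure the cases are exhaustive and that the cited results apply as stated.

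Fix a graph $H$ with at least one edge. Either $H$ contains a cycle, or $H$ is acyclic, that is, a forest. These two cases cover all $H$.

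\emph{Case 1: $H$ contains a cycle.} Then $H$ is not a forest, and in particular not a star forest. Theorem~\ref{thm:main} applies directly and shows that computing $\Rem_H(\cdot)$ is NP-hard. This is the required conclusion, because $H$ falls under ``otherwise'' in the corollary.

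\emph{Case 2: $H$ is a forest.} Here I would invoke the dichotomy of Levanzov, Shapira and the first author \cite{gishboliner2024trimming} quoted in the introduction. If $H$ is a star forest, computing $\Rem_H(\cdot)$ is polynomial-time solvable. If $H$ is a forest that is not a star forest, the problem is NP-hard. The hypothesis that $H$ has an edge ensures the problem is non-degenerate, matching the setting of \cite{gishboliner2024trimming}. If $H$ had no edges, $\Rem_H$ would be undefined or trivial.

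Combining the two cases, the polynomial-time case occurs exactly when $H$ is a star forest, and NP-hardness holds otherwise. The one point that needs care is that the two cited results use the same notion of ``star forest'' and the same problem, namely $\Rem_H$ with not-necessarily-induced copies. Given that, the argument is purely bookkeeping. All substantive difficulty lies in Theorem~\ref{thm:main}, which the corollary takes as given.
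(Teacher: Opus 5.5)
Your proposal is correct and matches the paper's intended argument. The paper gives no separate proof: the corollary follows immediately by combining Theorem~\ref{thm:main}, which covers graphs containing a cycle, with the forest dichotomy of \cite{gishboliner2024trimming}, exactly as in your exhaustive case split.
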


It is worth noting that the analogous problem for {\em induced} $H$-freeness was solved previously by Aravind, Sandeep and Sivadasan \cite{Induced} using different methods. Here the task is to make the input graph induced $H$-free using the smallest number of edge deletions, additions, or both; these are known as the induced edge-deletion, edge-addition, and edge-modification problems, respectively. 

We prove Theorem \ref{thm:main} via a reduction from the vertex cover problem. This is natural, since the vertex cover problem asks for the smallest number of vertex-deletions which leave the graph edgeless, while $\Rem_H(\cdot)$ asks for the smallest number of edge-deletions which leave the graph $H$-free. Thus, roughly speaking, the reduction works by taking an input graph $G$ for the vertex cover problem, and producing a graph $A = A(G,H)$ which has special edges corresponding to vertices of $G$, and such that each edge $e = xy \in E(G)$ gives rise to a copy of $H$ in $A$ involving the two special edges corresponding to $x,y$. We note that this idea is not new; e.g., it was used by Alon and Stav \cite{AlonStav} to show that computing $\Rem_H(\cdot)$ is NP-hard if $H$ is a cycle. However, analyzing this construction becomes significantly more difficult when dealing with a general graph $H$. The difficulty lies in controlling the copies of $H$ in the graph $A = A(G,H)$. This is essential in order to show that by taking a vertex-cover $C$ of $G$ and deleting all special edges corresponding to the vertices in $C$, one turns $A$ into an $H$-free graph. As we shall see, this statement is the main challenge in the proof of Theorem \ref{thm:main}.

\paragraph{Paper organization:} Section \ref{sec:prelim} contains some preliminaries. Section \ref{sec:construction} introduces the main construction used in our reduction and proves some of its key properties. We then prove Theorem \ref{thm:main} in several steps: first for 2-connected graphs $H$ in Section \ref{sec:2-connected}, then for connected graphs $H$ with minimum degree $\delta(H) \geq 2$ in Section \ref{sec:connected, delta >=2}, and finally in full generality in Section \ref{sec:main}.

\paragraph{Notation:} We use standard graph-theoretic notation. In particular, $\delta(G)$ denotes the minimum degree of $G$ and $\tau(G)$ the vertex-cover number of $G$. Also, $G[X]$ denotes the subgraph of $G$ induced on a set $X \subseteq V(G)$.

\paragraph{Declaration of Use of AI:} The proofs in this paper were found with no significant use of AI. ChatGPT was used to handle some minor issues and polish the writing.

\section{Preliminaries}\label{sec:prelim}

We need the following known fact, stating that the vertex-cover problem remains hard for input graphs of large girth. This follows e.g.~from \cite{Murphy}.
\begin{lemma}\label{thm:high-girth-VC-is-np-hard}
    For every fixed $g \geq 3$, the vertex-cover problem is NP-hard for input graphs with girth at least $g$.
\end{lemma}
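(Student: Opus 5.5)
The plan is a polynomial-time, approximation-preserving reduction from vertex cover on general graphs to vertex cover on graphs of girth at least $g$, by edge subdivision. Fix $g \geq 3$ and set $k = \lceil g/3 \rceil$. Given an input graph $G$ with $m$ edges, let $G'$ be obtained by replacing every edge $uv$ of $G$ with a path of length $2k+1$, that is, by inserting $2k$ new internal vertices. Every cycle of $G'$ comes from a cycle of $G$ of length at least $3$, so
\[
\girth(G') \geq 3(2k+1) > g .
\]
The construction is clearly polynomial in the size of $G$.

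The key identity to verify is
\[
\tau(G') = \tau(G) + k m .
\]

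For the upper bound, take a minimum vertex cover $C$ of $G$. For each edge $uv$ with $u \in C$, orient the subdivided path from $u$ to $v$. On a path $u = x_0, x_1, \dots, x_{2k+1} = v$ with $x_0$ already in the cover, the remaining edges are covered by adding the $k$ vertices $x_2, x_4, \dots, x_{2k}$. This gives a cover of $G'$ of size $\tau(G) + km$.

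For the lower bound, let $C'$ be any vertex cover of $G'$. Each subdivided path has $2k$ internal vertices spanning a path with $2k-1$ edges, so $C'$ contains at least $k$ of these internal vertices. Suppose further that neither endpoint $u$ nor $v$ lies in $C'$. Then the internal vertices must cover a path of $2k+1$ edges, which forces at least $k+1$ of them. Now form a set $C$ of vertices of $G$ as follows: put $C' \cap V(G)$ into $C$, and for every edge $uv$ of $G$ with neither endpoint in $C'$, add $u$ to $C$. The set $C$ is a vertex cover of $G$. Each added vertex is paid for by the extra $(k+1)$-st internal vertex on its path, and the paths have disjoint interiors, so $|C| \leq |C'| - km$. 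Hence $\tau(G) \leq \tau(G') - km$.

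Combining the two bounds gives the identity. Consequently, any polynomial-time algorithm for vertex cover on graphs of girth at least $g$ would compute $\tau(G)$ for arbitrary $G$. Since vertex cover on general graphs is NP-hard, the statement follows.

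The only step needing care is the charging argument in the lower bound. The point to check is that the extra internal vertices used to pay for different added vertices are distinct, and this holds because the interiors of the subdivided paths are pairwise disjoint.
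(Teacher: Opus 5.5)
Your proof is correct. The girth bound $3(2k+1)>g$ holds, and both directions of $\tau(G')=\tau(G)+km$ are sound. The matching $x_0x_1,x_2x_3,\dots,x_{2k}x_{2k+1}$ is what forces $k+1$ internal vertices when neither endpoint lies in $C'$. One word is inaccurate: the reduction is not ``approximation-preserving'' in the multiplicative sense, since the additive shift $km$ can dominate $\tau(G)$. Exact-value preservation is all you need here, so this does not affect the argument.

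The paper gives no proof of this lemma and simply cites Murphy. Your even-subdivision reduction is, as far as I know, the standard argument for this fact: subdividing an edge twice raises both $\alpha$ and $\tau$ by exactly one. So you have supplied a self-contained version of the standard route rather than a genuinely different one.
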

     

\noindent
We also recall the notion of a separator.


\begin{definition}
    Let $G$ be a graph. A set $S \subseteq V(G)$ {\em separates} two subsets 
    $A,B \subseteq V(G)$ if there is no $A,B$-path in $G-S$.\footnote{In particular, $A \setminus S$ and $B \setminus S$ are disjoint.}
    We say that $S$ is a {\em separator} of $G$ if $G - S$ is disconnected.\footnote{Equivalently, $S$ is a separator if it separates some two non-empty sets $A,B \subseteq V(G) \setminus S$.} 
\end{definition}

\enlargethispage{\baselineskip}

\noindent
Observe that a separator of $G$ can be empty.
Next, we recall the definition of $k$-connectivity.



\begin{definition}
    A graph $G$ is called {\em $k$-connected} if $|V(G)| \geq k+1$ and $S \subseteq V(G)$ is not a separator of $G$ whenever $|S| < k$.
\end{definition}
Two paths are called internally vertex-disjoint if they share no internal vertices.
We will frequently use Menger's theorem: A graph $G$ with $|V(G)| \geq 2$ is $k$-connected if and only if for every pair of distinct vertices $u,v \in V(G)$, there are at least $k$ distinct internally vertex-disjoint $u,v$-paths. An equivalent version is as follows: If 
$|V(G)| \geq k+1$, then $G$ is $k$-connected if and only if for every pair of distinct non-adjacent vertices $u,v \in V(G)$, there are at least $k$ internally vertex-disjoint $u,v$-paths.
We will also use the so-called fan lemma: In a $k$-connected graph, for every vertex $v$ and set $S$ of size $k$ with $v \notin S$, there are $k$ paths from $v$ to $S$ which only intersect at $v$.


We also need the following simple lemma on $k$-connectivity. For graphs $G_1,G_2$, the union $G_1 \cup G_2$ is defined as the graph with vertex-set $V(G_1) \cup V(G_2)$ and edge-set $E(G_1) \cup E(G_2)$.
\begin{lemma}\label{claim:generalized-glue-on-edges}
    Let $k \geq 1$, and let $G_1, G_2$ be $k$-connected graphs with $|V(G_1) \cap V(G_2)| \geq k$. Then, $G := G_1 \cup G_2$ is $k$-connected. 
\end{lemma}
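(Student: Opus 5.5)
We verify the definition directly: $G$ has at least $k+1$ vertices, and no set $S$ with $|S|<k$ separates $G$.

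\emph{Vertex count.} Since $G_1$ is $k$-connected, $|V(G)| \geq |V(G_1)| \geq k+1$.

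\emph{No small separator.} Fix $S \subseteq V(G)$ with $|S| < k$. We show that $G - S$ is connected.

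First, each $G_i - S$ is connected. Indeed, $S \cap V(G_i)$ has fewer than $k$ vertices, so it is not a separator of the $k$-connected graph $G_i$. Also $G_i - S$ is nonempty, because $|V(G_i)| \geq k+1 > |S|$.

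Second, $G_1 - S$ and $G_2 - S$ share a vertex. The intersection $V(G_1) \cap V(G_2)$ has at least $k$ vertices and $|S| \leq k-1$, so some vertex $w \in V(G_1) \cap V(G_2)$ lies outside $S$. This $w$ belongs to both $G_1 - S$ and $G_2 - S$.

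Third, these two pieces cover everything. Every vertex of $G - S$ lies in $V(G_1) \setminus S$ or in $V(G_2) \setminus S$. Both $G_1 - S$ and $G_2 - S$ are subgraphs of $G - S$, since $E(G) = E(G_1) \cup E(G_2)$.

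Now take any two vertices $u, v$ of $G - S$. Each is joined to $w$ by a path inside whichever connected piece $G_i - S$ contains it. Concatenating these two paths gives a $u,v$-walk in $G - S$. Hence $G - S$ is connected, $S$ is not a separator, and $G$ is $k$-connected.

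The only delicate point is the edge case where $S \cap V(G_i)$ removes almost all of $G_i$. This is covered by $|V(G_i)| \geq k+1$, which guarantees that $G_i - S$ is nonempty and so the definition of $k$-connectivity applies to it. Otherwise the argument is routine, and I do not expect any real obstacle.
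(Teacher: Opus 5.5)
Your proposal is correct and follows the paper's proof essentially step for step: the vertex count comes from $G_1$, each $G_i - S$ is connected, there is a common vertex $w \in (V(G_1)\cap V(G_2))\setminus S$, and every vertex of $G - S$ is joined to $w$. Your explicit check that $G_i - S$ is nonempty (via $|V(G_i)| \geq k+1$) is a detail the paper leaves implicit.
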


\begin{proof}
    First, observe that $|V(G)| \geq |V(G_1)| \geq k + 1$. Now fix any $S \subseteq V(G)$ with $|S| < k$. By the assumption that $G_1$ and $G_2$ are $k$-connected, we have that $G_1 \setminus S$ and $G_2 \setminus S$ are connected.
    Also, $(V(G_1) \cap V(G_2)) \setminus S \neq \emptyset$ as $|V(G_1) \cap V(G_2)| \geq k > |S|$. Let $v \in (V(G_1) \cap V(G_2)) \setminus S$. We claim that in $G \setminus S$, every vertex is connected by a path to $v$. Indeed, since $G_1 \setminus S$ is connected and $v \in G_1 \setminus S$, each vertex $w_1 \in G_1 \setminus S$ is connected to $v$ by a path in $G_1 \setminus S$. Similarly, each vertex $w_2 \in G_2 \setminus S$ is connected to $v$ by a path in $G_2 \setminus S$. 
    So all vertices of $G \setminus S$ are in the same connected component as $v$, as required.
\end{proof}

\section{The main construction}\label{sec:construction}

As mentioned in Section \ref{sec:intro}, our strategy for proving the NP-hardness of computing $\Rem_H( \cdot )$ (for every graph $H$ containing a cycle) is via a reduction from the vertex cover problem. We now introduce the primary construction used in the reduction. (We note that for some graphs $H$, Construction \ref{construction:general-construction} is not suitable and we will instead use a variant thereof; see Construction \ref{construction:second construction}. The reader may ignore this point for the time being.)

\begin{construction}[$A(G,H,s,a,b)$]\label{construction:general-construction}
    Let $G,H$ be graphs, where $V(G) = [n]$, and let $s,a,b \in V(H)$ be distinct vertices with $sa,sb \in E(H)$. Define a graph $A = A(G, H, s, a, b)$ as follows: 
    \begin{itemize}
    \item Start with vertices $\{s',w_1,\dots,w_n\}$ and edges $s'w_i$ for $i \in [n]$.
    \item For each edge $ij \in E(G)$ with 
            $i < j$, 
            add a copy $H_{ij}$ of $H$ in which $s'$ plays the role of $s$, $w_i$ plays the role of $a$, $w_j$ plays the role of $b$, and all other vertices are new (i.e., are disjoint from $\{s',w_1,\dots,w_n\}$ and from all other such copies). Denote by 
            $f_{ij}: H \rightarrow H_{ij}$ the corresponding isomorphism, so that 
            $f_{ij}(s) = s'$, 
            $f_{ij}(a) = w_i$,
            $f_{ij}(b) = w_j$. 
            We will sometimes refer to the copies $H_{ij}$ as the {\em basic copies} of $H$ in $A$, and refer to $V(H_{ij}) \setminus \{s',w_i,w_j\}$ as the {\em internal \nolinebreak vertices} \nolinebreak of \nolinebreak $H_{ij}$. 
            
    \end{itemize}
\end{construction}

See Figure \ref{fig:A construction} for an illustration of Construction \ref{construction:general-construction}.
Before proceeding, we note several basic properties of the construction. 
\begin{fact}\label{fact:outside vertices}
    Let $A = A(G,H,s,a,b)$.
    For every $ij \in E(G)$, the vertices in $V(H_{ij}) \setminus \{s',w_i,w_j\}$ have no neighbors in
    $V(A) \setminus V(H_{ij})$.
\end{fact}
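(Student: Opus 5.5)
The plan is to read this off directly from how Construction \ref{construction:general-construction} builds the edge set of $A$, by listing every edge of $A$ according to the step that created it. Every edge of $A$ is of one of two kinds. The first kind is a star edge $s'w_k$ for some $k \in [n]$. The second kind is an edge of some basic copy $H_{kl}$, namely $f_{kl}(u)f_{kl}(v)$ for an edge $uv \in E(H)$. No other edges are ever added, so any edge incident to an internal vertex $x \in V(H_{ij}) \setminus \{s',w_i,w_j\}$ is of one of these two kinds, and I will treat them in turn.

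The star edges are ruled out immediately. Both endpoints of a star edge lie in $\{s',w_1,\dots,w_n\}$. By construction, internal vertices are new vertices, disjoint from this set, so $x$ cannot be an endpoint of a star edge.

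Now let $xy$ be an edge of some basic copy $H_{kl}$, so that $x \in V(H_{kl})$.
\begin{itemize}
\item If $kl = ij$, then $y \in V(H_{ij})$, which is what we want.
\item If $kl \neq ij$, then $x$ would lie in two different basic copies. Since $x \notin \{s',w_1,\dots,w_n\}$, this would mean $x$ is a new vertex shared by two different copies. This contradicts the requirement in the construction that the new vertices of each copy are disjoint from those of all other copies.
\end{itemize}
Hence every neighbour of $x$ lies in $V(H_{ij})$, which proves Fact \ref{fact:outside vertices}.

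No step here is a genuine obstacle. The only point needing a little care is to state the vertex set precisely as the disjoint union of $\{s',w_1,\dots,w_n\}$ and the internal vertex sets of the copies $H_{kl}$. Once that is written down, each edge case above is a one-line check.
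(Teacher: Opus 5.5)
Your proof is correct and follows the paper's approach: the paper dismisses this fact as immediate from the construction, and your case split (star edges versus edges of basic copies, with internal vertices being new and pairwise disjoint across copies) simply writes out that check explicitly.
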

\begin{proof}
    This is immediate by the definition of the construction.
\end{proof}
\begin{fact}\label{lemma:edges-belong-uniquely}
    Let $A = A(G,H,s,a,b)$. 
    Then for every edge
    $uv \in E(A) \setminus \{s'w_i : i \in [n]\}$, there exists a unique edge
    $ij \in E(G)$ such that $uv \in E(H_{ij})$. 
\end{fact}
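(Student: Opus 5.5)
Both halves of this statement (existence and uniqueness) can be read off directly from the definition of $A = A(G,H,s,a,b)$ in Construction \ref{construction:general-construction}. By construction, $E(A)$ is the union of the star edges $\{s'w_i : i \in [n]\}$ and the edge sets $E(H_{ij})$ over $ij \in E(G)$. So for existence, any edge $uv \in E(A)$ that is not of the form $s'w_i$ must lie in $E(H_{ij})$ for some $ij \in E(G)$.

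For uniqueness, suppose $uv \in E(H_{ij}) \cap E(H_{kl})$ with $ij \neq kl$. The plan is to use the fact that distinct basic copies share only vertices from $\{s',w_1,\dots,w_n\}$, since all other vertices are new for each copy. Hence both endpoints $u,v$ lie in $V(H_{ij}) \cap V(H_{kl}) \subseteq \{s', w_i, w_j\}$. The vertices $w_1,\dots,w_n$ form an independent set in $A$: the construction adds no edges among them except possibly inside some $H_{ij}$. The only way two of them could be adjacent is if $w_iw_j$ is an edge of $H_{ij}$, i.e.\ $ab \in E(H)$.

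This is the one case needing care. If $ab \in E(H)$, the edge $w_iw_j$ lies in $H_{ij}$. But $w_iw_j$ has endpoint set $\{w_i,w_j\}$, which can only be contained in $V(H_{kl}) \cap \{w_1,\dots,w_n\} = \{w_k,w_l\}$ if $\{k,l\} = \{i,j\}$, i.e.\ $kl = ij$. So $w_iw_j$ belongs only to $H_{ij}$.

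Otherwise one endpoint is $s'$ and the other is some $w_m$, so $uv = s'w_m$ is one of the excluded star edges. (Note that $s'w_i$ and $s'w_j$ are indeed edges of $H_{ij}$ since $sa, sb \in E(H)$, which is exactly why they are excluded from the statement.)

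Therefore every edge outside $\{s'w_i\}$ lies in exactly one basic copy. No step here is a real obstacle; the only point needing care is the possible edge $w_iw_j$ when $ab \in E(H)$, which is handled by the vertex-set comparison above.
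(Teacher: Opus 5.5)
Your proof is correct and follows essentially the same route as the paper: existence comes from $E(A)=\{s'w_i : i \in [n]\}\cup\bigcup_{ij \in E(G)}E(H_{ij})$. Uniqueness comes from observing that two distinct basic copies share only $s'$ and $\{w_i,w_j\}\cap\{w_k,w_\ell\}$, so a shared edge not of the form $s'w_m$ would have to be $w_iw_j$ with $\{w_i,w_j\}=\{w_k,w_\ell\}$, forcing $ij=k\ell$. Your remark about $w_1,\dots,w_n$ being independent apart from a possible edge $w_iw_j$ (when $ab\in E(H)$) is harmless but unnecessary, since the vertex-intersection argument already covers that case.
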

\begin{proof}
    First, by construction we have
    $E(A) = \{s'w_i : i \in [n]\} \cup \bigcup_{ij \in E(G)}E(H_{ij})$.
    Therefore, there exists $ij \in E(G)$ with 
    $uv \in E(H_{ij})$.
    To prove uniqueness, suppose that 
    $uv \in E(H_{ij}) \cap E(H_{k\ell})$ for two edges $ij,k\ell \in E(G)$. Observe that
    $$V(H_{ij}) \cap V(H_{k\ell}) = \{s'\} \cup (\{w_i, w_j\} \cap \{w_k, w_{\ell}\}).$$
    As $uv \notin \{s'w_i : i \in [n]\}$, it must be that $\{u,v\} \subseteq \{w_i, w_j\} \cap \{w_k, w_{\ell}\}$.
    Thus, $\{w_i, w_j\} = \{w_k, w_{\ell}\}$ and so $ij = k\ell$, as required. 
\end{proof}

\begin{figure}
    \centering
    \begin{tikzpicture}[
  vertex/.style={circle,draw,thick,minimum size=8mm,inner sep=0pt,fill=white},
  root/.style={vertex,fill=white},
  core/.style={vertex,fill=white},
  private/.style={circle,draw,thick,minimum size=7mm,inner sep=0pt,fill=white,font=\scriptsize},
  edge/.style={draw,thick},
  copybox/.style={draw,black!55,dashed,rounded corners=4pt,inner sep=5pt},
  title/.style={font=\Large\bfseries},
  subtitle/.style={font=\small},
  note/.style={font=\small,align=left},
  insettitle/.style={font=\small\bfseries}
]

\node[root] (sp) at (1.1,3.85) {$s'$};
\node[core] (w1) at (-3.65,1.45) {$w_1$};
\node[core] (w2) at (-0.65,1.45) {$w_2$};
\node[core] (w3) at ( 2.35,1.45) {$w_3$};
\node[core] (w4) at ( 5.35,1.45) {$w_4$};

\draw[edge] (sp)--(w1);
\draw[edge] (sp)--(w2);
\draw[edge] (sp)--(w3);
\draw[edge] (sp)--(w4);

\node[private] (x12) at (-3.15,-0.75) {};
\node[private] (y12) at (-1.15,-0.75) {};
\node[private] (x23) at (-0.15,-0.75) {};
\node[private] (y23) at ( 1.85,-0.75) {};
\node[private] (x34) at ( 2.85,-0.75) {};
\node[private] (y34) at ( 4.85,-0.75) {};

\draw[edge] (w1)--(x12)--(y12)--(w2);
\draw[edge] (w2)--(x23)--(y23)--(w3);
\draw[edge] (w3)--(x34)--(y34)--(w4);
    \end{tikzpicture}
    \caption{The construction $A = A(G,H,s,a,b)$ with $G = \{12,23,34\}$ and $H = C_5$.}
    \label{fig:A construction}
\end{figure}

Next, when $P$ is a subgraph of $G$, we consider the natural identification of $A(P,H,s,a,b)$ with a subgraph of $A(G,H,s,a,b)$.

\begin{fact}\label{fact:subgraph}
    Let $A = A(G,H,s,a,b)$, and let $P$ be a subgraph of $G$. Then $A_P := A(P,H,s,a,b)$
    can be identified with the subgraph of $A$ having vertex-set
    \begin{equation}\label{eq:subgaph of A}
    V(A_P) := \{s'\} \cup \{w_i : i \in V(P)\} \cup \bigcup_{ij \in E(P)}V(H_{ij})
    \end{equation}
    and edge-set
    \begin{equation}\label{eq:subgraph of A, 2}
     E(A_P) := 
    \{s'w_i : i \in V(P)\} \cup \bigcup_{ij \in E(P)}E(H_{ij}).
    \end{equation}
\end{fact}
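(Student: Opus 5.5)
We will prove Fact \ref{fact:subgraph} by exhibiting an explicit embedding of $A_P = A(P,H,s,a,b)$ into $A = A(G,H,s,a,b)$ and checking that its image is exactly the subgraph with vertex-set \eqref{eq:subgaph of A} and edge-set \eqref{eq:subgraph of A, 2}. The first point to settle is labeling. Construction \ref{construction:general-construction} uses $V(G) = [n]$, while $V(P) \subseteq [n]$. We therefore build $A_P$ on the vertex labels of $P$ itself. Its central vertex is $s'$, its vertices $w_i$ are indexed by $i \in V(P)$, and for each $ij \in E(P)$ with $i<j$ it has a copy $H^P_{ij}$ of $H$ with isomorphism $f^P_{ij}$. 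Since the internal vertices of the copies are arbitrary new vertices, we are free to choose them; we take the internal vertices of $H^P_{ij}$ to be exactly the internal vertices of $H_{ij}$ in $A$, and set $f^P_{ij} := f_{ij}$.

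With this choice, the map sending $s' \mapsto s'$, $w_i \mapsto w_i$ for $i \in V(P)$, and each vertex of $H^P_{ij}$ to itself is the identity on labels. The content of the statement is then that this identity is an injective graph homomorphism whose image has the stated vertex and edge sets. This takes three checks.

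\emph{Well-defined and injective.} Every $ij \in E(P)$ is also an edge of $G$, so $H_{ij}$ exists in $A$. Its endpoints $i,j$ lie in $V(P)$, so $w_i, w_j$ are vertices of $A_P$. Distinct copies in $A$ have disjoint internal vertex sets, exactly as in $A_P$, so no two vertices of $A_P$ are identified.

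\emph{Vertex set.} By definition $V(A_P) = \{s'\} \cup \{w_i : i \in V(P)\} \cup \bigcup_{ij\in E(P)} V(H_{ij})$, which is \eqref{eq:subgaph of A}.

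\emph{Edge set.} Again by definition, $E(A_P) = \{s'w_i : i \in V(P)\} \cup \bigcup_{ij \in E(P)} E(H_{ij})$, which is \eqref{eq:subgraph of A, 2}. Each of these is an edge of $A$, because $s'w_i \in E(A)$ for all $i \in [n]$ and $E(H_{ij}) \subseteq E(A)$. Moreover, each such edge has both endpoints in the set \eqref{eq:subgaph of A}: the endpoints of $s'w_i$ are $s'$ and $w_i$ with $i \in V(P)$, and the endpoints of an edge of $H_{ij}$ lie in $V(H_{ij})$. So the pair \eqref{eq:subgaph of A}, \eqref{eq:subgraph of A, 2} is indeed a subgraph of $A$ isomorphic to $A_P$.

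There is no real obstacle here. The only point needing care is the relabeling issue: $V(P)$ need not be an initial segment $[m]$, so one should either note that the construction depends only on the labeled vertex set with its order, or pass through an order-preserving bijection $V(P) \to [|V(P)|]$. Using an order-preserving bijection ensures that the convention $i<j$, with $w_i$ playing $a$ and $w_j$ playing $b$, agrees between $A_P$ and $A$.

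It is also worth remarking that the identified subgraph need not be induced. For example, an edge $w_iw_j$ of some $H_{k\ell}$ with $k\ell \notin E(P)$ but $i,j \in V(P)$ is an edge of $A$ between vertices of $A_P$, yet it is not in $E(A_P)$. The statement only claims a subgraph, so this causes no problem, but later uses of the Fact must respect it.
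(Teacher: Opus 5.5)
Your proof is correct, and it is exactly the direct verification the paper leaves implicit: the Fact is stated without proof as immediate from Construction~\ref{construction:general-construction}. Taking the internal vertices of $A_P$ to be those of $A$, and handling the $i<j$ orientation through an order-preserving relabeling of $V(P)$, is the right way to make ``identified'' precise.

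One small correction to your closing remark on non-inducedness. An edge $w_iw_j$ can lie in $H_{k\ell}$ only if $\{k,\ell\}=\{i,j\}$. So the edges of $A$ between vertices of $A_P$ that are missing from $E(A_P)$ are exactly the edges $w_iw_j$ with $ij\in E(G)\setminus E(P)$, $i,j\in V(P)$, and $ab\in E(H)$.
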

\noindent
In light of Fact \ref{fact:subgraph}, and with a slight abuse of notation, we will sometimes write 
$A_P = A(P,H,s,a,b)$ for the subgraph of 
$A = A(G,H,s,a,b)$ with vertex-set and edge-set defined in \eqref{eq:subgaph of A} and \eqref{eq:subgraph of A, 2}, respectively. (By Fact \ref{fact:subgraph}, $A_P$ is indeed isomorphic to $A(P,H,s,a,b)$.)

\vspace{0.3cm}


We aim to show that if $s,a,b \in V(H)$ are chosen appropriately and $G$ has large enough girth (larger than $2v(H)$, say), then 
$\Rem_H(A) = \tau(G)$, where $A = A(G,H,s,a,b)$ and $\tau(G)$ is the vertex-cover number of $G$.
We now prove the easier direction of the equality 
$\Rem_H(A) = \tau(G)$. 
\begin{lemma}\label{lemma:easy-inequality}
    Let $G$ be a graph on $[n]$, let $H$ be a graph, and let $s,a,b \in V(H)$ be distinct vertices with $sa,sb \in E(H)$. Let $A = A(G,H,s,a,b)$ as defined in Construction \ref{construction:general-construction}. Then 
    $\Rem_H(A) \geq \tau(G)$.
\end{lemma}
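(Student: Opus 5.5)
We prove the inequality by turning an arbitrary set of deleted edges into a vertex cover of $G$ of no larger size. Let $F \subseteq E(A)$ be a set of edges such that $A - F$ is $H$-free, with $|F| = \Rem_H(A)$. Each basic copy $H_{ij}$, for $ij \in E(G)$, is a copy of $H$ in $A$. Hence $F$ must contain at least one edge of $E(H_{ij})$ for every $ij \in E(G)$. This is the only property of $F$ we will use. No control over other copies of $H$ is needed for this direction.

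Next we define a map $\phi : F \to V(G)$ such that $\phi(F)$ is a vertex cover of $G$. Since $|\phi(F)| \le |F|$, this gives $\tau(G) \le |F| = \Rem_H(A)$. The map is defined on each edge $e \in F$ as follows.
\begin{itemize}
\item If $e = s'w_i$ for some $i \in [n]$, set $\phi(e) = i$.
\item Otherwise, by Fact \ref{lemma:edges-belong-uniquely} there is a unique edge $ij \in E(G)$ with $e \in E(H_{ij})$. Set $\phi(e) = i$; the choice of $i$ rather than $j$ is arbitrary.
\end{itemize}

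It remains to check that $\phi(F)$ covers every edge $ij \in E(G)$. By the first paragraph, $F$ contains some edge $e \in E(H_{ij})$.
\begin{itemize}
\item If $e$ is not of the form $s'w_k$, then by uniqueness in Fact \ref{lemma:edges-belong-uniquely}, $e$ was assigned via $H_{ij}$ itself. So $\phi(e) \in \{i,j\}$.
\item If $e = s'w_k$, then $e \in E(H_{ij})$ forces $w_k \in V(H_{ij})$. The vertices $w_\ell$ lying in $H_{ij}$ are only $w_i$ and $w_j$: all other vertices of $H_{ij}$ are new internal vertices. So $k \in \{i,j\}$, and $\phi(e) = k$ covers $ij$.
\end{itemize}

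There is no real obstacle here. The only points needing care are the two bullets of the covering check: the uniqueness from Fact \ref{lemma:edges-belong-uniquely}, which makes $\phi$ well defined, and the handling of edges $s'w_k$, which are shared by several basic copies.
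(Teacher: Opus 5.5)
Your proof is correct and is essentially the paper's argument. The paper likewise sends each deleted edge to $s'w_i$ or $s'w_j$ for the unique basic copy $H_{ij}$ containing it (via Fact~\ref{lemma:edges-belong-uniquely}), and concludes that the resulting index set is a vertex cover of size at most $|S|$, since every $H_{ij}$ must be hit. Your explicit handling of the case where the hitting edge is itself some $s'w_k$ spells out a step the paper leaves implicit.
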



\begin{proof}
Let $S \subseteq E(A)$ such that $A-S$ is $H$-free. Our goal is to show that $|S| \geq \tau(G)$. 
To this end, we first define another edge-set $S' \subseteq E(A)$, as follows. 
For each edge $e \in S$, if $e = s'w_i$ for some $i \in [n] = V(G)$, then add $e$ to $S'$. Otherwise, by Fact \ref{lemma:edges-belong-uniquely}, there is a unique $ij \in E(G)$ such that $e \in E(H_{ij})$. Add either $s'w_i$ or $s'w_j$ to $S'$ arbitrarily. This completes the definition of $S'$. Note that $|S'| \leq |S|$. 
Also, if for some $ij \in E(G)$ it holds that 
$s'w_i, s'w_j \not \in S'$, then 
$E(H_{ij}) \cap S = \emptyset$, which is impossible because $A-S$ is $H$-free. 
Thus, we showed that $C := \{i \in V(G) : s'w_i \in S'\}$ is a vertex-cover of $G$. It follows that $|S| \geq |S'| \geq \tau(G)$, as required. 
\end{proof}

In light of Lemma \ref{lemma:easy-inequality}, we are left with proving the reverse inequality 
$\Rem_H(A) \leq \tau(G)$. The strategy is simple: Take a vertex-cover $C$ of $G$, and delete from $A$ the set of edges $\{ s'w_i : i \in C \}$. The difficulty lies in showing that the resulting graph is $H$-free.\footnote{In most cases, this statement relies on a proper choice of the vertices $s,a,b$.} 
As a demonstration of our approach, we will first prove this statement in the cases that $H$ is 3-connected or a cycle (see Theorem \ref{theorem:main-theorem-H-equals-L} below). These cases are substantially easier than the general case. They also form the induction basis in an inductive proof for the case that $H$ is 2-connected.

In subsequent sections (i.e., Sections \ref{sec:2-connected}-\ref{sec:main}), we will first handle the case of 
2-connected $H$ (see Section \ref{sec:2-connected}) and then connected $H$ with $\delta(H) \geq 2$ (see Section \ref{sec:connected, delta >=2}). In the case that $H$ is connected and $\delta(H) = 1$, one can show that the problem of computing $\Rem_H(G)$ is at least as hard as the problem of computing 
$\Rem_{H_*}(G)$, where $H_*$ is the {\em 2-core} of $H$ (and so $\delta(H_*) \geq 2$). Finally, we will extend the hardness result to disconnected graphs $H$ (containing a cycle). These last two reductions appear in Section \ref{sec:main} and do not require Construction  \ref{construction:general-construction}.

In light of Lemma \ref{thm:high-girth-VC-is-np-hard}, we may assume that $\girth(G) > 2v(H)$, say. 
This is useful due to the following lemma.

\begin{lemma}\label{thm:g-locally-a-forest}
    Let $A = A(G, H, s, a, b)$ and let $H'$ be a copy of $H$ in $A$. 
    Then there exists a subgraph $F \subseteq G$ with $v(F) \leq 2v(H)$ such that $H'$ is a subgraph of $A_{F} = A(F,H,s,a,b)$.
\end{lemma}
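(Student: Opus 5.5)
We define $F$ directly from $H'$ and then check that every vertex and edge of $H'$ lies in $A_F$. For each vertex $u \in V(H')$ we assign a set of indices in $V(G)$:
\begin{itemize}
\item if $u = w_i$, assign $\{i\}$;
\item if $u$ is an internal vertex of some basic copy $H_{ij}$, assign $\{i,j\}$;
\item if $u = s'$, assign nothing.
\end{itemize}
The copy $H_{ij}$ is unique, since internal vertices of distinct basic copies are disjoint. Let $F$ be the subgraph of $G$ with the following vertex and edge sets:
\begin{itemize}
\item its vertices are all indices assigned to vertices of $H'$;
\item its edges are all $ij \in E(G)$ such that $H'$ contains an internal vertex of $H_{ij}$, or $H'$ contains an edge of $E(H_{ij}) \setminus \{s'w_k : k \in [n]\}$.
\end{itemize}
By Fact~\ref{lemma:edges-belong-uniquely}, every edge of $H'$ is either of the form $s'w_k$ or belongs to a unique $E(H_{ij})$, which gives us a well-defined edge of $F$ in the second case. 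We need $ij \in E(F)$ to have both endpoints in $V(F)$. This holds because an edge $uv$ of $H'$ inside $H_{ij}$ that is not of the form $s'w_k$ either has an internal endpoint, which contributes $\{i,j\}$, or is an edge $w_iw_j$, which contributes $i$ and $j$ directly.

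Next we verify $H' \subseteq A_F$ using Fact~\ref{fact:subgraph}.
\begin{itemize}
\item Vertices: $s' \in V(A_F)$; each $w_i$ in $H'$ has $i \in V(F)$; each internal vertex of $H_{ij}$ in $H'$ has $ij \in E(F)$, so it lies in $V(H_{ij}) \subseteq V(A_F)$.
\item Edges: an edge $s'w_k$ of $H'$ has $k \in V(F)$, so $s'w_k \in E(A_F)$; any other edge lies in some $E(H_{ij})$ with $ij \in E(F)$.
\end{itemize}

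Finally, we bound $v(F)$. Each vertex of $H'$ contributes at most two indices, so $v(F) \le 2v(H') = 2v(H)$. The bound is honest: an internal vertex genuinely contributes two indices, even though the corresponding $w_i, w_j$ may not appear in $H'$.

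The only delicate point is making sure no edge of $H'$ escapes $A_F$. In particular, an edge $w_iw_j$ (when $ab \in E(H)$) or an edge $s'w_i$ must have its indices accounted for, and the assignment rules above were chosen exactly to guarantee this. Note that the girth assumption is not needed for this lemma. It will presumably be used later to conclude that $F$, having at most $2v(H)$ vertices, is a forest.
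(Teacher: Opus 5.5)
Your proposal is correct and follows essentially the same argument as the paper. You build the same subgraph $F$: the index $i$ for each $w_i\in V(H')$, both endpoints $i,j$ together with the edge $ij$ whenever $H'$ meets the internal vertices of $H_{ij}$, and the edge $ij$ whenever $w_iw_j\in E(H')$, which your condition on edges of $E(H_{ij})\setminus\{s'w_k : k\in[n]\}$ captures exactly. You then check $H'\subseteq A_F$ via Fact~\ref{fact:subgraph} and bound $v(F)$ by charging at most two indices to each vertex of $H'$; the paper calls these verifications ``easy to see,'' and you simply spell them out.
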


\begin{proof}
    We define $F$ as follows: For each $i \in V(G)$ such that $w_i \in V(H')$, put the vertex $i$ into $V(F)$. 
    For each $ij \in E(G)$ such that 
    $w_iw_j \in E(H')$, put the edge $ij$ into $E(F)$ (the vertices $i,j$ are already in $V(F)$ by the previous sentence).
    Finally, for each $ij \in E(G)$ such that 
    $V(H') \cap \nolinebreak (V(H_{ij}) \setminus \nolinebreak \{s',w_i,w_j\}) \neq \nolinebreak \emptyset$, put both vertices $i,j$ into $V(F)$ and the edge $ij$ into $E(F)$. It is easy to see that 
    $v(F) \leq 2v(H') = 2v(H)$, 
    $V(H') \subseteq V(A_F)$, and 
    $E(H') \subseteq E(A_F)$.
\end{proof}


If $\girth(G) > 2v(H)$, then the subgraph $F \subseteq G$ given by Lemma \ref{thm:g-locally-a-forest} is a forest. Consequently, we will be able to assume, for most of what follows, that the graph $G$ is itself a forest.
For example, in the cases where $H$ is 3-connected or a cycle, we prove the following:

\begin{theorem}\label{theorem:main-theorem-H-equals-L}
    Suppose that $H$ is $3$-connected or a cycle, and let $s,a,b \in V(H)$ be distinct vertices with $sa,sb \in E(H)$. Let $G$ be a forest with $V(G) = [n]$, let $A = A(G,H,s,a,b)$, and let $C$ be a vertex cover of $G$. Then, 
    $A' := A - \{s'w_i : i \in C\}$ is $H$-free.
\end{theorem}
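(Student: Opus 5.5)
We will prove the statement by studying the block structure of $A'$. The point is that a copy $H'$ of $H$ must sit inside a single block (2-connected piece) when $H$ is 2-connected. So it suffices to show that every 2-connected subgraph of $A'$ is either too small or fails to be isomorphic to $H$.

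\textbf{Step 1: cut vertices of $A$.} Since $G$ is a forest, the only way to leave a basic copy $H_{ij}$ is through $s'$, $w_i$, $w_j$. Also, any cycle in $A$ that uses internal vertices of several basic copies must pass through $s'$ or through $w$'s. We first observe, using Fact \ref{fact:outside vertices}, that for each $ij\in E(G)$ the set $\{s',w_i,w_j\}$ separates the internal vertices of $H_{ij}$ from the rest of $A$.

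\textbf{Step 2: blocks of $A'$.} Deleting the edges $s'w_i$ for $i\in C$ changes the picture. For each edge $ij$ of $G$, at least one of $w_i,w_j$, say $w_i$, lost its edge to $s'$. We then claim that in $A'-\{s'\}$ the vertex set $\bigcup_{ij}V(H_{ij})\setminus\{s'\}$ forms components that are trees of copies glued along single vertices $w_k$. This is because $G$ is a forest, so the ``copy graph'' (copies adjacent when they share a $w_k$) is acyclic. Hence every cycle of $A'$ that meets two different basic copies must pass through $s'$ at least twice, or through a gluing vertex $w_k$ twice, which is impossible for a cycle. Consequently every 2-connected subgraph $B$ of $A'$ meets the internal vertices of at most one basic copy $H_{ij}$. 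More precisely, $B$ is contained in $H_{ij}$ together possibly with the edge $s'w_j$ and additional paths through $s'$. Those extra paths through $s'$ can only return via some $w_k$ with $s'w_k$ present, but returning from $w_k$ into $H_{ij}$ requires a path avoiding $s'$ through the tree structure, which goes through a cut vertex. So in fact $B\subseteq H_{ij}-s'w_i$ (the edge $s'w_i$ being absent in $A'$).

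\textbf{Step 3: conclude.} If $H$ is 3-connected or a cycle, it is 2-connected, so any copy $H'$ of $H$ in $A'$ is 2-connected. By Step 2, $H'$ then lies in $H_{ij}$ minus at least one edge. But that graph has fewer edges than $H$, a contradiction.

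This argument seemingly never uses 3-connectivity, which is suspicious. The subtle point is in Step 2: a path leaving $H_{ij}$ at $w_j$ (still adjacent to $s'$) can travel into a neighbouring copy $H_{jk}$ and come back to $s'$ via $w_k$ or directly via $s'$ inside $H_{jk}$ (since $s'\in H_{jk}$). Thus $s'$ and $w_j$ are \emph{not} separated, and a block can span $H_{ij}$ and $H_{jk}$ glued along $\{s',w_j\}$. This is the main obstacle. The correct Step 2 is therefore that blocks of $A'$ are unions of copies glued along 2-sets $\{s',w_j\}$, forming a tree-like 2-sum structure. Here $H'$ being 3-connected forces it into a single copy, because $\{s',w_j\}$ is a 2-separator of the union, and $H'$ cannot have a 2-separator splitting it nontrivially. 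So $H'$ lies in one $H_{ij}$ together with the edge $s'w_j$, but missing $s'w_i$, giving too few edges. For a cycle $H=C_\ell$, a cycle through several copies uses a path from $s'$ to $w_j$ in each of two copies. Each such path avoids the edge $s'w$ that was removed, and in $H_{ij}-s'w_i$ the shortest $s'$–$w_j$ path other than the edge $s'w_j$ has length $\ell-1$. Any cycle through two copies therefore has length at least $2(\ell-1)>\ell$, so no $C_\ell$ spans several copies, and within one copy the missing edge again kills it. Handling this counting carefully, including the case where the cycle uses the edge $s'w_j$ itself, is the step I would verify most carefully.
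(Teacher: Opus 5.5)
\textbf{Overall.} Your strategy is viable, and after you discard the first Step~2 (which is false, as you noticed) the $3$-connected case is the paper's argument. Two steps still need repair: the ``nontrivial split'' in the $3$-connected case is asserted rather than proved, and the key sentence of the cycle case is false as written.

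\textbf{The $3$-connected case.} Pick $j$ of degree $\ge 2$ in the relevant tree. Then $\{s',w_j\}$ is a $2$-cut of $A$, $3$-connectivity confines $H'$ to a single basic copy, and that copy has lost an edge. What you have not justified is the word ``nontrivially''. You must show that if $V(H')\not\subseteq V(H_{ij})$ for every $ij$, then some such cut has vertices of $H'$ in at least two different pieces. This needs an argument: a priori $H'$ might use stray vertices $w_m$, only edges inside $\{s',w_1,\dots,w_n\}$, or copies lying in different components of $G$, where the relevant cut is $\{s'\}$ alone.

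The paper does this bookkeeping with a minimal support $P$ of $H'$:
\begin{itemize}
\item Since $\delta(H')\ge 2$, $P$ has no isolated vertices and $e(P)\ge 2$ (Fact~\ref{fact:minimality of P}, Lemma~\ref{lem:|P|>=2}).
\item Lemma~\ref{claim:separators of H'} guarantees that $H'$ meets every piece $D_k$ of the cut $\{s'\}\cup\{w_i : i\in R\}$ for a separator $R$ of $P$ with $|R|\le 1$.
\end{itemize}
Your description of the blocks of $A'$ as a tree-like $2$-sum is unproved and not needed once this is in place.

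\textbf{The cycle case.} Here you take a genuinely different route from the paper: you count lengths rather than vertices. The route is good, but the sentence carrying it is wrong as written. $H_{ij}-s'w_i$ is a path, so it contains no $s'$--$w_j$ path other than the edge $s'w_j$. The path $s',w_i,\dots,w_j$ of length $\ell-1$ lies in $H_{ij}-s'w_j$ and needs $i\notin C$.

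The clean version goes as follows.
\begin{itemize}
\item $A-s'$ is the forest obtained from $G$ by replacing each edge $ij$ with a $w_i$--$w_j$ path of length $\ell-2$.
\item Hence every cycle of $A'$ passes through $s'$ exactly once, via two edges $s'w_i, s'w_k$ of $A'$ with $i\ne k$ and $i,k\notin C$.
\item Since $C$ is a vertex cover, $ik\notin E(G)$. The rest of the cycle is the unique $w_i$--$w_k$ path in that forest, of length $(\ell-2)\,d_G(i,k)\ge 2(\ell-2)$.
\item So every cycle of $A'$ has length at least $2\ell-2>\ell$.
\end{itemize}
With this there are no separate cases, including the one involving the edge $s'w_j$ that worried you.

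\textbf{Comparison with the paper.} The paper's cycle argument runs as follows. It uses the separator lemma to get $s'\in V(H')$. It shows $H'$ meets the interior of some $H_{ij}$, with a separate check for triangles. It propagates through the degree-$2$ internal vertices to get $V(H_{ij})\setminus\{s'\}\subseteq V(H')$, and then uses $V(H')\not\subseteq V(H_{ij})$ to get more than $\ell$ vertices. Your fixed version is shorter, treats triangles uniformly, and proves more: $A'$ has girth at least $2\ell-2$. The paper's version has the advantage of sharing the minimal-support and separator framework it reuses for the $2$-connected case.
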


The proof of Theorem \ref{theorem:main-theorem-H-equals-L} relies on several key properties of the construction $A = A(G,H,s,a,b)$, which we now prove. First we consider certain natural separators of $A$.

\begin{lemma}\label{lem: s',w_t separator}
    Let $A = A(G,H,s,a,b)$ as in Construction \ref{construction:general-construction}. Let 
    $R \subseteq V(G)$ be a separator of $G$, and put $R' := \{s'\} \cup \{w_i : i \in R\}$.
    Then $A - R'$ is disconnected. Moreover, let $C_1,\dots,C_m$ be the connected components of 
    $G - R$, and put
    $$
    D_k := \{w_i : i \in C_k\} \cup 
    \bigcup_{ij \in E(C_k) \cup E(C_k,R)} \left( V(H_{ij}) \setminus \{s',w_i,w_j\} \right)
    $$
    for $k \in [m]$.
    Then there are no paths in $A-R'$ between any two of the sets $D_1,\dots,D_m$.
\end{lemma}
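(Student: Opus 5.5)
The plan is to show that every edge of $A - R'$ joins two vertices in the same set $D_k$. From this the "moreover" part follows at once. The disconnectedness of $A-R'$ also follows, since $R$ is a separator of $G$, so $m\ge 2$ and each $D_k$ contains the nonempty set $\{w_i : i\in C_k\}$.

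As a preliminary step I will check that $D_1,\dots,D_m$ partition $V(A)\setminus R'$. Every vertex of $A$ other than $s'$ is either some $w_i$ or an internal vertex of a unique basic copy $H_{ij}$. A vertex $w_i\notin R'$ has $i\in V(G)\setminus R$, so $i$ lies in exactly one component $C_k$. An internal vertex of $H_{ij}$ is assigned according to the edge $ij$. Since $ij\in E(G)$, the edge cannot go between two different components of $G-R$, so either both endpoints lie in the same $C_k$, or one endpoint lies in some $C_k$ and the other in $R$, or both lie in $R$. Internal vertices of a copy $H_{ij}$ with $i,j\in R$ belong to no $D_k$. This does not matter: they form an extra part $D_0$, and the same edge argument shows they have no neighbours in any $D_k$. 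The claim about paths concerns only the sets $D_1,\dots,D_m$, so this is harmless.

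Next I will check the edges of $A-R'$. Edges $s'w_i$ are gone, because $s'\in R'$. By Fact \ref{lemma:edges-belong-uniquely}, every other edge lies in a unique $H_{ij}$, and by Fact \ref{fact:outside vertices} its endpoints are internal vertices of $H_{ij}$ or $w_i,w_j$. If $ij\in E(C_k)\cup E(C_k,R)$, then all surviving endpoints lie in $D_k$: internal vertices by definition, and $w_i$ or $w_j$ either in $D_k$ or deleted because the index is in $R$. If $i,j\in R$, the endpoints are all in $D_0$ or deleted. Hence no edge of $A-R'$ joins $D_k$ to $D_\ell$ for $k\ne\ell$ with $k,\ell\ge 1$, nor joins some $D_k$ with $k\ge1$ to $D_0$. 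Any path starting in $D_k$ therefore stays in $D_k$.

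The only mild obstacle is the bookkeeping of the case $i,j\in R$ and the observation that an edge of $G$ cannot join distinct components of $G-R$. Everything else is immediate from the construction.
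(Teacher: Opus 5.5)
Your proposal is correct and follows essentially the same route as the paper: show that the sets $D_1,\dots,D_m$ are pairwise disjoint, and that every edge of $A-R'$ meeting $D_k$ stays inside $D_k$. Your explicit check that $m \geq 2$ and that each $D_k$ is nonempty also justifies the claim that $A-R'$ is disconnected, which the paper leaves implicit.
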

\begin{proof}
    We need to show that for all $1 \leq k < \ell \leq m$, there is no path between $D_k$ and $D_{\ell}$ in $A - R'$. To this end, it suffices to prove the following two claims:
    \begin{enumerate}
        \item $D_k \cap D_{\ell} = \emptyset$ for all $1 \leq k < \ell \leq m$.
        \item For all $1 \leq k \leq m$, all edges leaving $D_k$ go to $R'$.
    \end{enumerate}

    We first prove Item 1. 
    For convenience, let us show (without loss of generality) that $D_1 \cap D_2 = \emptyset$.
    Suppose for contradiction that 
    $v \in D_1 \cap D_2$. If $v = w_i$ for some $i \in V(G)$ then 
    $i \in C_1 \cap C_2$, contradicting that 
    $C_1, C_2$ are disjoint. Thus, $v \in (V(H_{ij}) \setminus \{s', w_i, w_j\}) \cap (V(H_{k\ell}) \setminus \{s', w_k, w_{\ell}\})$ for 
    $ij \in E(C_1) \cup E(C_1,R)$ and 
    $k\ell \in E(C_2) \cup E(C_2,R)$. But then $ij \neq k\ell$ (as $C_1 \cap C_2 = \emptyset$), and therefore
    $$
    (V(H_{ij}) \setminus \{s', w_i, w_j\}) \cap (V(H_{k\ell}) \setminus \{s', w_k, w_{\ell}\}) = \emptyset
    $$
    by Construction \ref{construction:general-construction}, so no such $v$ can exist.
    
    For Item 2, let $uv \in E(A)$ with $u \in D_k$. We need to show that 
    $v \in D_k \cup R'$. If $v = s'$ then this holds, so suppose that $v \neq s'$. Also, 
    $u \neq s'$ because $u \in D_k$. 
    Hence, $uv \notin \{s'w_i : i \in [n]\}$. Then, by Fact \ref{lemma:edges-belong-uniquely}, there exists a unique edge $ij \in E(G)$ such that $uv \in E(H_{ij})$. 
    We now consider two cases. Suppose first that 
    $u \in V(H_{ij}) \setminus \{s',w_i,w_j\}$. Since $u \in D_k$, we then must have 
    $ij \in E(C_k) \cup E(C_k,R)$. As 
    $v \in V(H_{ij})$, it follows that $v \in D_k$ or $v \in R'$, as required. Suppose now that $u \in \{w_i,w_j\}$, say $u = w_i$. Then $i \in C_k$ (because $u \in D_k$). Since $C_k$ is a connected component of $G-R$, we have 
    $j \in C_k \cup R$. Hence, 
    $V(H_{ij}) \setminus \{s',w_i,w_j\} \subseteq D_k$ and $w_j \in D_k \cup R'$. In any case, $v \in D_k \cup R'$. 
\end{proof}

\vspace{0.3cm}

\noindent
Next, we need the following important definition.
\begin{definition}[minimal support]\label{def:minimal edge-set}
    Let $A = A(G,H,s,a,b)$ and let 
    $K$ be a subgraph of $A$. A 
    {\em minimal support} for $K$ is a minimal (with respect to inclusion) subgraph $P$ of $G$ such that
    $K$ is a subgraph of $A_P = A(P,H,s,a,b) \subseteq A$.
\end{definition}
Here, the minimality of $P$ is with respect to the subgraph relation; namely, we assume that removing any vertex or edge from $P$ violates the property that $K \subseteq A_P$. Note that every subgraph $K$ of $A$ admits a minimal support. 
\noindent
The following are easy properties of a minimal support. 
\begin{fact}\label{fact:minimality of P}
    Let $P$ be a minimal support for 
    a subgraph $K \subseteq A$. Then the following hold:
    \begin{enumerate}
        \item For every $ij \in E(P)$, there exists 
        $v \in V(K)$ with 
        $v \in 
        V(H_{ij}) \setminus \{s',w_i,w_j\}$, or there exists 
        $e \in E(K)$ with 
        $e \in E(H_{ij}) \setminus \{s'w_i,s'w_j\}$.
        \item For every $i \in V(P)$, if $i$ is isolated in $P$ then $w_i \in V(K)$ and $d_K(w_i) \leq 1$. Hence, if $\delta(K) \geq 2$ then $P$ has no isolated vertices.
    \end{enumerate}
\end{fact}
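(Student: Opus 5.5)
Both items will be proved by contradiction with the minimality of $P$: in each case we exhibit a proper subgraph $P' \subsetneq P$ (obtained by deleting one edge or one isolated vertex) such that $K \subseteq A_{P'}$. Throughout we use the explicit description of $V(A_P)$ and $E(A_P)$ from Fact \ref{fact:subgraph}.

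\emph{Item 1.} Suppose $ij \in E(P)$ violates the item. Then no vertex of $K$ lies in $V(H_{ij}) \setminus \{s',w_i,w_j\}$, and every edge of $K$ lying in $E(H_{ij})$ is one of $s'w_i, s'w_j$. Set $P' := P - ij$, keeping the vertices $i,j$. We check that $K \subseteq A_{P'}$.

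For vertices: every $v \in V(K)$ lies in $V(A_P)$. If $v \notin V(H_{ij}) \setminus \{s',w_i,w_j\}$, then $v$ is $s'$, or $w_k$ with $k \in V(P) = V(P')$, or an internal vertex of some $H_{k\ell}$ with $k\ell \in E(P')$. In each case $v \in V(A_{P'})$.

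For edges: every $e \in E(K)$ is either some $s'w_k$ with $k \in V(P')$, or lies in $E(H_{k\ell})$ for some $k\ell \in E(P)$. If $k\ell \neq ij$, then $e \in E(A_{P'})$. If $k\ell = ij$, then by assumption $e \in \{s'w_i, s'w_j\}$, and these edges belong to $E(A_{P'})$ because $i,j \in V(P')$. Here one should note that Fact \ref{lemma:edges-belong-uniquely} is not even needed, since the argument only requires that $e$ lies in \emph{some} allowed set. This contradicts minimality.

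\emph{Item 2.} Let $i$ be isolated in $P$, and set $P' := P - i$.
\begin{itemize}
\item Since $i$ has no incident edges in $P$, the only elements of $A_P$ that depend on $i$ are the vertex $w_i$ and the edge $s'w_i$; no $H_{ij}$ with $ij \in E(P)$ contains $w_i$.
\item If $w_i \notin V(K)$, then $K \subseteq A_{P'}$. Indeed, $s'w_i \notin E(K)$ in this case, and $w_i$ cannot lie in any other $H_{k\ell}$ with $k\ell \in E(P')$, because those copies contain only $s'$, $w_k$, $w_\ell$ and new vertices.
\item Hence $w_i \in V(K)$. Every edge of $K$ at $w_i$ lies in $E(A_P)$, and the only such edge is $s'w_i$. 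Therefore $d_K(w_i) \le 1$.
\end{itemize}
Finally, if $\delta(K) \ge 2$, then $d_K(w_i) \le 1$ is impossible for any $w_i \in V(K)$, so $P$ has no isolated vertex.

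The only point requiring care is this bookkeeping: $w_i$ belongs to no basic copy $H_{k\ell}$ unless $i \in \{k,\ell\}$, which follows directly from Construction \ref{construction:general-construction}. I expect no genuine obstacle.
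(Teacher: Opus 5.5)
Your proof is correct and follows the same approach as the paper: delete the edge $ij$ (keeping $i,j$) or the isolated vertex $i$. Then $A_P$ differs from $A_{P'}$ only by the vertices of $V(H_{ij})\setminus\{s',w_i,w_j\}$ and the edges of $E(H_{ij})\setminus\{s'w_i,s'w_j\}$, respectively only by $w_i$ and $s'w_i$, so minimality of $P$ forces both items; you simply spell out the vertex and edge bookkeeping that the paper leaves implicit.
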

\begin{proof}
    For Item 1, take $P' := P - \{ij\}$ (i.e., we delete the edge $ij$ but keep its vertices). By the minimality of $P$, the graph $K$ is not a subgraph of $A_{P'}$. But the only vertices present in $A_P$ and not in $A_{P'}$ are the vertices in $V(H_{ij}) \setminus \{s',w_i,w_j\}$, and the only edges present in $A_P$ and not in $A_{P'}$ are the edges in
    $E(H_{ij}) \setminus \{s'w_i,s'w_j\}$. Item 1 follows.

    For Item 2, suppose that $i \in V(P)$ is isolated in $P$, and take $P' := P - \{i\}$. By the minimality of $P$, the graph $K$ is not a subgraph of $A_{P'}$. But $A_P$ is obtained from $A_{P'}$ by adding the vertex $w_i$ and the edge $s'w_i$. Hence, $w_i \in V(K)$ and $d_K(w_i) \leq d_{A_P}(w_i) = 1$. Item 2 follows. 
\end{proof}

The next simple lemma states that for $A'$ as defined in Theorem \ref{theorem:main-theorem-H-equals-L}, any (hypothetical) copy of $H$ in $A'$ needs to be supported on at least two of the basic copies $H_{ij}$, provided that $\delta(H) \geq 2$. 
\begin{lemma}\label{lem:|P|>=2}
Suppose that $\delta(H) \geq 2$. 
Let $A = A(G,H,s,a,b)$, let $C$ be a vertex-cover of $G$, and let $A' := A - \{s'w_i : i \in C\}$. Let $H'$ be an $H$-copy in $A'$, and let 
$P \subseteq G$ be a minimal support for 
$H'$. Then $e(P) \geq 2$.
Moreover, there is no $ij \in E(G)$ with 
$V(H') \subseteq V(H_{ij})$. 
\end{lemma}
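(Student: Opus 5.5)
The plan is to prove the ``moreover'' part first, and then derive $e(P)\geq 2$ from it together with Fact \ref{fact:minimality of P}.

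\textbf{Step 1: no $H$-copy of $A'$ lies inside a single $V(H_{ij})$.} Suppose $V(H')\subseteq V(H_{ij})$ for some $ij\in E(G)$. Consider the graph $A'[V(H_{ij})]$. Its edges are exactly $E(H_{ij})$ minus those of $s'w_i,s'w_j$ that were deleted. To see this, note first that every edge of $A$ with both ends in $V(H_{ij})$ lies in $E(H_{ij})$. By Fact \ref{lemma:edges-belong-uniquely}, any edge other than $s'w_k$ belongs to a unique basic copy. Two basic copies meet only in $\{s'\}\cup(\{w_i,w_j\}\cap\{w_k,w_\ell\})$, so an edge of $H_{k\ell}$ with both ends in $V(H_{ij})$ would have both ends in $\{w_i,w_j\}$, forcing $k\ell=ij$. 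The edges $s'w_i,s'w_j$ are themselves in $H_{ij}$, since $sa,sb\in E(H)$.

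Because $C$ is a vertex cover, $i\in C$ or $j\in C$, so at least one of $s'w_i,s'w_j$ is deleted. Hence $A'[V(H_{ij})]$ has $v(H)$ vertices and at most $e(H)-1$ edges. An $H$-copy $H'$ with $V(H')\subseteq V(H_{ij})$ must use all $v(H)$ vertices and $e(H)$ edges of this graph, which is impossible.

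\textbf{Step 2: $e(P)\geq 2$.} Since $\delta(H')=\delta(H)\geq 2$, Fact \ref{fact:minimality of P}(2) shows that $P$ has no isolated vertices. If $e(P)=0$, then $P$ is empty, and $A_P$ consists of $s'$ alone, which cannot contain $H'$. If $e(P)=1$, say $E(P)=\{ij\}$, then $P$ is the single edge $ij$ with $V(P)=\{i,j\}$. Then $A_P$ has vertex set $\{s',w_i,w_j\}\cup V(H_{ij})=V(H_{ij})$. So $V(H')\subseteq V(H_{ij})$, contradicting Step 1. Therefore $e(P)\geq 2$.

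\textbf{Main obstacle.} The only delicate point is the edge-counting in Step 1: checking that $A$ induces on $V(H_{ij})$ no edges beyond those of $H_{ij}$. This is handled by the intersection structure of the basic copies, as described above. Everything else is immediate.
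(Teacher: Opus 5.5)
Your proof is correct and uses the same ingredients as the paper: Item 2 of Fact \ref{fact:minimality of P} reduces to the case where $P$ is a single edge $ij$, and then $A[V(H_{ij})]=H_{ij}$ loses at least one of $s'w_i,s'w_j$ in $A'$. The differences are only in presentation: you prove the ``moreover'' part first and deduce $e(P)\geq 2$ from it, while the paper goes in the reverse order, and you justify $A[V(H_{ij})]=H_{ij}$ explicitly and phrase the contradiction as an edge count, where the paper simply asserts that fact and concludes $H'=H_{ij}$.
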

\begin{proof}
We have $\delta(H') = \delta(H) \geq 2$ (by assumption). 
Hence, by Item 2 of Fact \ref{fact:minimality of P}, $P$ has no isolated vertices.
Also, $P$ is not the empty graph, because otherwise $A_P = \{s'\}$ and so $H' \not\subseteq A_P$.
Now suppose by contradiction that $e(P) \leq 1$. 
Since $P$ has no isolated vertices and is not empty, it follows that $P$ is an edge, say $P = \{ij\}$ for some $ij \in E(G)$. 
Then the construction $A_P$ simply equals the basic copy $H_{ij}$, so $H' \subseteq H_{ij}$. 
But
$H'$ and $H_{ij}$ are both copies of $H$, and so $H' = H_{ij}$. 
However, one of the edges $s'w_i,s'w_j \in E(H_{ij})$ was deleted when turning $A$ into $A'$ (because the vertex-cover $C$ contains $i$ or $j$). This contradicts the fact that $H'$ is a subgraph of $A'$.

For the second part, suppose by contradiction that $V(H') \subseteq V(H_{ij})$ for some $ij \in E(G)$. The subgraph of $A$ induced on $V(H_{ij})$ is exactly $H_{ij}$, so again we get that $H' = H_{ij}$, which was already ruled out above. 
\end{proof}

\noindent
The following lemma relates minimal supports of a subgraph $K$ with separators of $K$.

\begin{lemma}\label{claim:separators of H'}
        Let $A = A(G,H,s,a,b)$, 
        let $K$ be a subgraph of $A$, and let 
        $P \subseteq G$ be a minimal support for $K$. 
        Let $R \subseteq V(P)$ be a separator of the graph $P$ and put 
        $R' := \{s'\} \cup \{w_i : i \in R\}$. Then the graph $K - R'$ is disconnected.
        Furthermore, let $C_1,\dots,C_m$ be the connected components of $P - R$, and let 
        \begin{equation}\label{eq:separators of H'}
        D_k := \{ w_i : i \in C_k\} \cup 
        \bigcup_{ij \in E_P(C_k) \cup E_P(C_k,R)}
        \left( V(H_{ij}) \setminus \{s',w_i,w_j\} \right)
        \end{equation}
        for $k=1,\dots,m$. Then $V(K)$ intersects each of the sets $D_1,\dots,D_m$, and there is no path in $K-R'$ between any two of these sets. 
\end{lemma}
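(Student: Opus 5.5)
The plan is to apply Lemma \ref{lem: s',w_t separator} to the graph $A_P = A(P,H,s,a,b)$, viewed as the construction built from the graph $P$ in place of $G$. This identification is legitimate by Fact \ref{fact:subgraph}. Since $R$ is a separator of $P$, that lemma says there is no path in $A_P - R'$ between any two of the sets $D_1,\dots,D_m$. These are exactly the sets in \eqref{eq:separators of H'}, because the edge-sets $E_P(C_k)\cup E_P(C_k,R)$ are computed inside $P$. As $K \subseteq A_P$, we have $K - R' \subseteq A_P - R'$. Hence any path in $K-R'$ between two of the $D_k$'s would also be such a path in $A_P - R'$, which is impossible. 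This settles the ``no path'' claim.

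Next I show that $V(K)$ meets every $D_k$, using the minimality of $P$ through Fact \ref{fact:minimality of P}. Fix $k$ and let $C_k$ be a component of $P-R$; it is nonempty. Consider two cases.

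\emph{Case 1: $C_k$ contains an edge $ij$, or there is an edge $ij \in E_P(C_k,R)$.} By Item 1 of Fact \ref{fact:minimality of P}, one of two things holds. Either $K$ has a vertex in $V(H_{ij})\setminus\{s',w_i,w_j\}\subseteq D_k$, and we are done. Or $K$ has an edge $e \in E(H_{ij})\setminus\{s'w_i,s'w_j\}$. Such an edge $e$ must have an endpoint other than $s'$, and in fact an endpoint that is either internal to $H_{ij}$ or equal to one of $w_i,w_j$. If it is internal, it lies in $D_k$. Otherwise $e$ is incident to $w_i$ or $w_j$; taking $i\in C_k$, I need care only when the endpoint is $w_j$ with $j\in R$. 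In that subcase the other endpoint of $e$ is either internal to $H_{ij}$, hence in $D_k$, or equals $w_i \in D_k$ (it cannot be $s'$, since $e\neq s'w_j$). In all subcases $V(K)\cap D_k\neq\emptyset$.

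\emph{Case 2: $C_k$ is a single vertex $i$ with no edges of $P$ at all.} Then $i$ is isolated in $P$, and Item 2 of Fact \ref{fact:minimality of P} gives $w_i\in V(K)$, with $w_i\in D_k$.

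Finally, since $m\ge 2$ (as $R$ is a separator of $P$) and the sets $D_k$ are pairwise disjoint (Item 1 in the proof of Lemma \ref{lem: s',w_t separator}), $K-R'$ has vertices in two different sets $D_k, D_\ell$ with no path between them. Therefore $K-R'$ is disconnected.

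The main obstacle is the nonemptiness step, specifically the bookkeeping in Case 1 when the edge of $K$ guaranteed by minimality lies between $w_i$ and $w_j$, or touches $R$. The argument above handles this by noting that every edge of $H_{ij}$ other than $s'w_i,s'w_j$ has an endpoint in $D_k$ whenever $ij\in E_P(C_k)\cup E_P(C_k,R)$.
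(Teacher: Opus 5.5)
Your proof is correct and follows essentially the same route as the paper. The no-path statement comes from applying Lemma \ref{lem: s',w_t separator} to $A_P \supseteq K$. Nonemptiness of each $V(K)\cap D_k$ comes from the two items of Fact \ref{fact:minimality of P}: either $C_k$ is an isolated vertex of $P$, or there is an edge $ij\in E_P(C_k)\cup E_P(C_k,R)$ whose witness in $K$ gives an internal vertex of $H_{ij}$ or some $w_t$ with $t\in C_k$. The only difference is that you spell out the final deduction that $K-R'$ is disconnected (via $m\ge 2$), which the paper leaves implicit.
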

    
    \begin{proof}
        By the choice of $P$ (as a minimal support for $K$), we have 
        $K \subseteq A_P = A(P,H,s,a,b)$. 
        By Lemma \ref{lem: s',w_t separator}, applied to $A_P$, we get that $A_P - R'$ is disconnected, and furthermore that there are no paths between any two of the sets $D_1,\dots,D_m$ in $A_P - R'$.
        Thus, it suffices to show that $K$ has vertices in each of the sets $D_1,\dots,D_m$. Without loss of generality, we will prove this for $k=1$.
        Suppose first that there exists $i \in C_1$ which is isolated in $P$. Then by Item 2 of Fact \ref{fact:minimality of P}, $w_i \in V(K)$. Also, by definition, $w_i \in D_1$, so we are done. 
        From now on, suppose that no vertex in $C_1$ is isolated in $P$.
        Then there must exist an edge in 
        $E_P(C_1) \cup E_P(C_1,R)$.
        Let $ij$ be such an edge with 
        $i \in C_1$.
        By Item 1 of Fact \ref{fact:minimality of P}, $K$ either contains a vertex 
        $v \in V(H_{ij}) \setminus \{s',w_i,w_j\}$ or an edge
        $e \in E(H_{ij}) \setminus \{s'w_i,s'w_j\}$. 
        In the former case, $v \in D_1$ and we are done. So we may assume that 
        $K$ has no vertices in 
        $V(H_{ij}) \setminus \{s',w_i,w_j\}$. Then the only option for the edge $e$ (in the latter case) is $e = w_iw_j$. But then $w_i \in V(K)$ (because $e \in E(K)$) and also $w_i \in D_1$ (because $i \in C_1$). Thus, $V(K) \cap D_1 \neq \emptyset$, as required. 
    \end{proof}

\noindent
We can now prove Theorem \ref{theorem:main-theorem-H-equals-L}.

\begin{proof}[Proof of Theorem \ref{theorem:main-theorem-H-equals-L}]
Suppose by contradiction that 
$A' = A - \{s'w_i : i \in C\}$ contains a copy $H'$ of $H$. Let $P \subseteq G$ be a minimal support for $H'$ (as in Definition \ref{def:minimal edge-set}). 
Clearly, $\delta(H') = \delta(H) \geq 2$.
Hence, by Lemma \ref{lem:|P|>=2}, $e(P) \geq 2$.
Since $P$ is a forest (as a subgraph of $G$), it has a separator $R$ of size at most 1 (this is true of any forest with at least 2 edges). 
Put $R' := \{s'\} \cup \{w_i : i \in R\}$. 
By Lemma \ref{claim:separators of H'}, applied with 
$K := H'$, we get that $H' - R'$ is disconnected. 
If $H$ is 3-connected then this immediately gives a contradiction, because $H'$ is 3-connected (as a copy of $H$) and $|R'| \leq 2$. 
From now on, suppose that $H$ is a cycle.
Note that $R' \subseteq V(H')$, because otherwise $H'$ is not 2-connected (which contradicts $H$ being a cycle). In particular, this means that $s' \in V(H')$.  

Next, we claim that 
$V(H') \not 
\subseteq 
\{s',w_1,\dots,w_n\}$. 
Indeed, suppose otherwise. 
If $H$ is a triangle, then either 
$V(H') = \{w_i, w_j, w_k\}$ for some $ij, jk, ki \in E(G)$ (in which case $G$ contains a triangle, contradicting that $G$ is a forest) or 
$V(H') = \{s', w_i, w_j\}$ for some $ij \in E(G)$ (which contradicts that either $s'w_i \not \in E(A')$ or $s'w_j \not \in E(A')$). And if $H$ is a cycle of length at least $4$, then
$w_iw_j \not \in E(A)$ for all $ij \in E(G)$, and thus $A[\{s',w_1,\dots,w_n\}]$ is a star and hence contains no cycle. So we conclude that there exists $ij \in E(G)$ such that 
\begin{equation}\label{eq:H = L proof}
V(H') \cap (V(H_{ij}) \setminus \{s',w_i,w_j\}) \neq \emptyset.
\end{equation}

By Fact \ref{fact:outside vertices}, $s',w_i,w_j$ are the only vertices in $V(H_{ij})$ which may have neighbors outside of $V(H_{ij})$. Also,
every vertex $v \in V(H_{ij}) \setminus \{s',w_i,w_j\}$ has $d_A(v) = 2$ (because $H$ is a cycle). Hence, if $v \in V(H_{ij}) \setminus \{s',w_i,w_j\}$ belongs to $V(H')$, then both neighbors of $v$ in $H_{ij}$ also belong to $V(H')$ (since $H'$ is 2-regular). By applying this repeatedly and using \eqref{eq:H = L proof}, we get that $V(H_{ij}) \setminus \{s'\} \subseteq V(H')$. We also already saw that $s' \in V(H')$. Finally, since $V(H') \not \subseteq V(H_{ij})$ (by the second part of Lemma \ref{lem:|P|>=2}), we see that $|V(H')| > |V(H_{ij})|$, which is impossible as $H',H_{ij}$ are both copies of $H$. This completes the proof. 
\end{proof}

\noindent
We end this section by proving a few other basic properties of Construction \ref{construction:general-construction}.
For a path $P = (v_1,\dots,v_m)$ and indices $1 \leq i \leq j \leq m$, we denote by $P[v_i,v_j]$ the segment of $P$ between $v_i$ and $v_j$. 

\begin{lemma}\label{claim:exists-subpath-between-vertices-in-same-copy}
        Let $A = A(G,H,s,a,b)$ and suppose that $G$ is a forest. Let $ij \in E(G)$, let
        $x, y \in V(H_{ij})$, and let $Q$ be an $x,y$-path in $A$. Then there exists an $x,y$-path $Q'$ such that 
        $V(Q') \subseteq V(H_{ij}) \cap V(Q)$.
\end{lemma}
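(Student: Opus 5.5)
Since $H_{ij}$ is a copy of $H$ and $H$ need not be connected, we cannot hope for a path inside $H_{ij}$ in general. Instead we will shortcut $Q$: every excursion of $Q$ outside $V(H_{ij})$ will be shown to start and end at the same vertex, which is impossible for a path, or to be replaceable by a single edge of $H_{ij}$.

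Write $Q=(x=v_1,\dots,v_m=y)$. If $V(Q)\subseteq V(H_{ij})$, take $Q'=Q$. Otherwise, consider a maximal segment $Q[v_p,v_q]$ whose interior vertices $v_{p+1},\dots,v_{q-1}$ all lie outside $V(H_{ij})$, while $v_p,v_q\in V(H_{ij})$. Such segments exist because the endpoints $x,y$ lie in $V(H_{ij})$. By Fact~\ref{fact:outside vertices}, both $v_p$ and $v_q$ lie in $\{s',w_i,w_j\}$, and they are distinct because $Q$ is a path. The key claim is that $\{v_p,v_q\}\neq\{w_i,w_j\}$, that is, one of them is $s'$. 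If we can show this, then the other endpoint is $w_i$ or $w_j$, and $s'w_i,s'w_j\in E(H_{ij})$ (since $sa,sb\in E(H)$). We then replace the segment by this single edge; it lies inside $V(H_{ij})\cap V(Q)$. Doing this for every excursion produces a walk, hence a path, from $x$ to $y$ whose vertices all lie in $V(H_{ij})\cap V(Q)$, which gives $Q'$.

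To prove the key claim, suppose the interior path $v_{p+1},\dots,v_{q-1}$ connects $w_i$ to $w_j$ and avoids $V(H_{ij})$, in particular avoiding $s'$. We apply Lemma~\ref{lem: s',w_t separator} in a modified form. Deleting the edge $ij$ from the forest $G$ separates $i$ from $j$: let $T_i$ and $T_j$ be the two components of $G-ij$ containing $i$ and $j$. Every vertex of $A-s'$ outside $V(H_{ij})\setminus\{w_i,w_j\}$ belongs either to $\{w_k\}$ or to the interior of some $H_{k\ell}$ with $k\ell\neq ij$. We assign it to the side containing $k$ (or $k\ell$). Using Fact~\ref{lemma:edges-belong-uniquely}, the only edges of $A-s'$ joining the two sides lie in $H_{ij}$. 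Hence in $A-s'-(V(H_{ij})\setminus\{w_i,w_j\})$ there is no path from $w_i$ to $w_j$, other than possibly the single edge $w_iw_j$ if it is present in $H_{ij}$. But that edge belongs to $H_{ij}$, and the excursion has at least one interior vertex outside $V(H_{ij})$, so it is not this edge. This is a contradiction.

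The main obstacle is making this side assignment rigorous, in the same way as the proof of Item~2 in Lemma~\ref{lem: s',w_t separator}. The cleanest route is to define $D_i$ and $D_j$ from $T_i$ and $T_j$ (excluding the edge $ij$) and check that every edge leaving $D_i$ goes to $s'$ or to the interior of $H_{ij}$.
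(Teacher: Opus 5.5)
Your proof is correct and is essentially the paper's argument: each excursion of $Q$ outside $V(H_{ij})$ starts and ends in $\{s',w_i,w_j\}$, a $w_i$--$w_j$ excursion is impossible because $G$ is a forest, and every other excursion is shortcut by $s'w_i$ or $s'w_j$. The paper instead takes a minimum-length counterexample and rules out the $w_i$--$w_j$ excursion by applying Lemma~\ref{lem: s',w_t separator} with $R=\{j\}$ to the re-entry edge lying in some $H_{i'j}$; you separate along the components of $G-ij$, which needs an edge-cut analogue of that lemma that you sketch correctly (in your last paragraph the edge $w_iw_j$, if present, also leaves $D_i$, but your main argument already accounts for it).
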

    \begin{proof}
        Let $Q' := (x = v_1, \dots, v_m = y)$ be a minimum-length $x,y$-path with $V(Q') \subseteq V(Q)$. If $V(Q') \subseteq V(H_{ij})$ then we are done, so
        suppose by contradiction that $V(Q') \not \subseteq V(H_{ij})$. Hence, there is some minimal index $k + 1$ such that $v_{k + 1} \not \in V(H_{ij})$ (i.e., $v_k \in V(H_{ij})$), and there is some minimal index $\ell > k + 1$ such that $v_{\ell} \in V(H_{ij})$. 
        The choice of $k,\ell$ guarantees that $v_{k+1},v_{k+2},\dots,v_{\ell-1} \notin V(H_{ij})$. 
        
        By Fact \ref{fact:outside vertices}, $s'$, $w_i$ and $w_j$ are the only vertices in $H_{ij}$ which may have neighbors outside of $H_{ij}$.
        Hence,
        $v_k,v_\ell \in \{s',w_i,w_j\}$.
        We claim that $s' \in \{v_k, v_{\ell}\}$. Indeed, suppose by contradiction that $s' \neq v_k, v_{\ell}$. Then, without loss of generality, we may assume that $v_k = w_i$ and $v_{\ell} = w_j$.
        By Fact \ref{lemma:edges-belong-uniquely}, there exists
        $e \in E(G)$ such that 
        $v_{\ell-1}v_{\ell} \in E(H_e)$. 
        Then $e \neq ij$ because $v_{\ell-1} \notin V(H_{ij})$ (by the minimality of $\ell$). Also, since $w_j = v_{\ell} \in V(H_e)$, we have that $j \in e$. Thus, we can write $e = i'j$ for some $i' \neq i$. Since $ij,i'j \in E(G)$ and $G$ is a forest, the vertices $i,i'$ are in different connected components of 
        $G-\{j\}$. Hence, by Lemma 
        \ref{lem: s',w_t separator} with $R = \{j\}$, we get that 
        $A - \{s',w_j\}$ has no paths between
        $X := V(H_{ij}) \setminus \{s',w_j\}$ and 
        $Y := V(H_{i'j}) \setminus \{s',w_j\}$. But $v_k = w_i \in X$ and $v_{\ell-1} \in Y$, and $Q'[v_k,v_{\ell-1}]$ is a 
        $v_k,v_{\ell-1}$-path not containing $s',w_j$ (because $v_{k+1},\dots,v_{\ell-1} \notin V(H_{ij})$ and $v_k = w_i \neq s',w_j$). This is a contradiction, proving our claim that 
        $s' \in \{v_k,v_\ell\}$. 
        
       Without loss of generality, suppose that $v_k = s'$ (the case that $v_\ell = s'$ is handled the same way). Then $v_\ell \in \{w_i,w_j\}$. In any case, $v_kv_\ell \in E(A)$ (since $s'w_i,s'w_j \in E(A)$). But then we can shorten $Q'$ by using the edge $v_kv_{\ell}$ in place of the path $Q'[v_k,v_{\ell}]$. 
       (This is indeed a shortening because $\ell-k \geq 2$.)
       This contradicts the minimality of $Q'$ and completes the proof. 
    \end{proof}

\begin{lemma}\label{lemma:intersections with H_{ij} are k-connected}
    Let $A = A(G,H,s,a,b)$ and suppose that $G$ is a forest. Let $k \geq 1$, let $X \subseteq V(A)$, and suppose that $A[X]$ is $k$-connected. Then for every $ij \in E(G)$, $A[X \cap V(H_{ij})]$ is $k$-connected or 
    $|X \cap V(H_{ij})| \leq k$.
\end{lemma}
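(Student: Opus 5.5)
Let $Y := X \cap V(H_{ij})$ and assume $|Y| \geq k+1$. We show that $A[Y]$ is $k$-connected using the Menger characterization: for any two distinct vertices $x,y \in Y$ we will find $k$ internally vertex-disjoint $x,y$-paths in $A[Y]$. Since $A[X]$ is $k$-connected, Menger's theorem gives $k$ internally vertex-disjoint $x,y$-paths $Q_1,\dots,Q_k$ in $A[X]$. Apply Lemma \ref{claim:exists-subpath-between-vertices-in-same-copy} to each $Q_t$ (here we use that $G$ is a forest and $x,y \in V(H_{ij})$). This gives an $x,y$-path $Q_t'$ with $V(Q_t') \subseteq V(H_{ij}) \cap V(Q_t) \subseteq Y$. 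Because $V(Q_t') \subseteq V(Q_t)$, the internal vertices of $Q_t'$ lie in $V(Q_t)\setminus\{x,y\}$, which is the interior of $Q_t$. Hence the paths $Q_1',\dots,Q_k'$ remain internally vertex-disjoint. Each $Q'_t$ is a path in $A$ whose vertices lie in $Y$, so it is a path in the induced graph $A[Y]$.

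The one point needing care is that two of the $Q'_t$ might coincide. This can only happen if both equal the single edge $xy$, since a path with an internal vertex cannot appear twice among internally disjoint paths. As the $Q_t$ are distinct paths, at most one of them is the edge $xy$. If $Q_t'=Q_s'=xy$ with $s\ne t$, then at least one of $Q_s, Q_t$, say $Q_s$, has an internal vertex. That vertex lies in no other $Q_r$, so nothing is lost, but we still need $k$ distinct paths.

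To avoid this issue altogether, we use the second form of Menger's theorem stated in the preliminaries: since $|Y| \geq k+1$, it suffices to treat non-adjacent pairs $x,y$ in $A[Y]$. For such a pair, $xy \notin E(A)$, because $A[Y]$ is an induced subgraph. Therefore every $Q_t'$ has at least one internal vertex. The $Q_t'$ are then pairwise distinct and internally disjoint, giving $k$ of them. This shows that $A[Y]$ is $k$-connected.

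The main step is the appeal to Lemma \ref{claim:exists-subpath-between-vertices-in-same-copy}, together with the observation that $V(Q_t') \subseteq V(Q_t)$ preserves internal disjointness; the rest is routine bookkeeping.
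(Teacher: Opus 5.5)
Your proof is correct and follows the paper's argument essentially step for step: restrict to non-adjacent pairs in $A[X \cap V(H_{ij})]$ via the second form of Menger's theorem, take $k$ internally disjoint paths in $A[X]$, shrink each with Lemma~\ref{claim:exists-subpath-between-vertices-in-same-copy} (which preserves internal disjointness), and use non-adjacency to ensure the shrunken paths are distinct. Your second paragraph's detour through the all-pairs version can be dropped, since the non-adjacent formulation already settles distinctness.
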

\begin{proof}
    Suppose that $|X \cap V(H_{ij})| \geq k+1$. Our goal is to show that for every pair of non-adjacent distinct vertices $x,y \in X \cap V(H_{ij})$, there exist $k$ internally vertex-disjoint $x,y$-paths inside $A[X \cap \nolinebreak V(H_{ij})]$. 
    Let $x,y \in X \cap V(H_{ij})$ be non-adjacent. 
    Since $A[X]$ is $k$-connected, there are $k$ internally vertex-disjoint $x,y$-paths $Q_1,\dots,Q_k$ in $A[X]$. By Lemma \ref{claim:exists-subpath-between-vertices-in-same-copy}, for each $i \in [k]$ there exists an $x,y$-path $Q_i'$ with $V(Q'_i) \subseteq V(H_{ij})$ and $V(Q'_i) \subseteq V(Q_i)$.
    Then $Q_1',\dots,Q_k'$ are internally vertex-disjoint and contained in $X \cap V(H_{ij})$. 
    Note also that $Q'_1,\dots,Q'_k$ are pairwise-distinct because $x,y$ are non-adjacent. 
    This completes the proof. 
\end{proof}

\begin{lemma}\label{lemma:H'-contains-intermediate-vertices-generalized}
        Let $A = A(G,H,s,a,b)$ and suppose that $G$ is a forest. 
        Let $k \geq 1$ and suppose that $H$ is $k$-connected.
        Let $S$ be a separator of $H$ with $|S| = k$, and let $p,q \in V(H) \setminus S$ be two vertices separated by $S$. 
        Let $H'$ be a copy of $H$ in $A$.
        Let $ij \in E(G)$ and suppose that
        $f_{ij}(p), f_{ij}(q) \in V(H')$. Then 
        $f_{ij}(S) \subseteq V(H')$, and 
        $f_{ij}(S)$ is a separator of $H'$. 
\end{lemma}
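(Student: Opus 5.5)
We use Menger's theorem inside $H'$ and then transfer the paths into $H_{ij}$ using Lemma \ref{claim:exists-subpath-between-vertices-in-same-copy}. Write $x := f_{ij}(p)$ and $y := f_{ij}(q)$. Since $S$ separates $p,q$ in $H$, they are non-adjacent in $H$. Hence $x,y$ are non-adjacent in $H_{ij}$, and therefore non-adjacent in $A$, because $A[V(H_{ij})] = H_{ij}$. As $H'$ is a copy of the $k$-connected graph $H$, Menger's theorem gives $k$ internally vertex-disjoint $x,y$-paths $Q_1,\dots,Q_k$ in $H'$. These paths are distinct and each has at least one internal vertex.

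\textbf{Step 1: $f_{ij}(S)\subseteq V(H')$.} By Lemma \ref{claim:exists-subpath-between-vertices-in-same-copy}, for each $t\in[k]$ there is an $x,y$-path $Q_t'$ with $V(Q_t')\subseteq V(H_{ij})\cap V(Q_t)$. The paths $Q'_1,\dots,Q'_k$ are internally vertex-disjoint, and each has an internal vertex, since $x,y$ are non-adjacent in $A$. Pulling back by $f_{ij}^{-1}$, they become $k$ internally vertex-disjoint $p,q$-paths in $H$. Each of these paths must meet the separator $S$ at an internal vertex, and since they are internally disjoint and $|S|=k$, each contains exactly one vertex of $S$ and together they cover all of $S$. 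So $f_{ij}(S)\subseteq \bigcup_t V(Q_t')\subseteq V(H')$.

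\textbf{Step 2: $f_{ij}(S)$ separates $H'$.} Suppose not. Then $H' - f_{ij}(S)$ is connected, so it contains an $x,y$-path $Q$ avoiding $f_{ij}(S)$. Applying Lemma \ref{claim:exists-subpath-between-vertices-in-same-copy} to $Q$ gives an $x,y$-path $Q'$ with $V(Q')\subseteq V(H_{ij})\cap V(Q)$. This path avoids $f_{ij}(S)$ and lies in $A[V(H_{ij})] = H_{ij}$. Its preimage $f_{ij}^{-1}(Q')$ is then a $p,q$-path in $H-S$, contradicting that $S$ separates $p$ and $q$. Hence $x,y$ lie in different components of $H'-f_{ij}(S)$. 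In particular, $f_{ij}(S)$ is a separator of $H'$ with $x,y$ on different sides.

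\textbf{Main obstacle.} The step needing care is the transfer from paths in $H'$ (or in $A$) to paths inside $H_{ij}$. This is exactly what Lemma \ref{claim:exists-subpath-between-vertices-in-same-copy} provides, and it is where the forest assumption on $G$ is used. The rest is routine: check that the $Q_t'$ remain internally disjoint (this holds because $V(Q_t')\subseteq V(Q_t)$), and check that each $Q'_t$ has an internal vertex (this uses non-adjacency of $x,y$ in $A$, which follows from Fact \ref{lemma:edges-belong-uniquely}).
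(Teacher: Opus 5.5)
Your proof is correct and follows essentially the paper's route: your Step 2 is exactly the paper's argument. It takes an $f_{ij}(p),f_{ij}(q)$-path in $H'$, moves it into $H_{ij}$ via Lemma~\ref{claim:exists-subpath-between-vertices-in-same-copy}, and pulls it back by $f_{ij}^{-1}$ to contradict that $S$ separates $p$ and $q$. The only difference is in Step 1: the paper does not invoke Menger's theorem, and instead gets $f_{ij}(S)\subseteq V(H')$ directly from the Step 2 argument, because $f_{ij}(S)\cap V(H')$ then separates $f_{ij}(p),f_{ij}(q)$ in the $k$-connected graph $H'$ and so must have size at least $k=|f_{ij}(S)|$.
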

\begin{proof}
    It suffices to show that every 
    $f_{ij}(p), f_{ij}(q)$-path in $H'$ contains a vertex from $f_{ij}(S)$. Indeed, since $H'$ is $k$-connected (as a copy of $H$), it would follow that 
    $f_{ij}(S) \subseteq V(H')$ and that 
    $f_{ij}(S)$ is a separator of $H'$. So let $Q$ be an $f_{ij}(p), f_{ij}(q)$-path in $H'$. By Lemma \ref{claim:exists-subpath-between-vertices-in-same-copy}, there is an 
    $f_{ij}(p), f_{ij}(q)$-path $Q'$ with 
    $V(Q') \subseteq V(H_{ij}) \cap V(Q)$. Now, $f_{ij}^{-1}(Q')$ is a $p,q$-path in $H$ (because $f_{ij} : H \rightarrow H_{ij}$ is an isomorphism). Since $S$ separates $p,q$, we have 
    $f_{ij}^{-1}(V(Q')) \cap S \neq \emptyset$, and hence $V(Q') \cap f_{ij}(S) \neq \emptyset$. This shows that $Q$ contains a vertex of $f_{ij}(S)$, as required.  
\end{proof}

\noindent

\section{2-connected $H$}\label{sec:2-connected}
The main result of this section is the extension of Theorem \ref{theorem:main-theorem-H-equals-L} to 2-connected graphs $H$, as follows:
\begin{theorem}\label{theorem:2-connected H}
    For every 2-connected graph $H$, there exist distinct vertices $s,a,b \in V(H)$ with $sa,sb \in E(H)$ such that the following holds. Let $G$ be a forest with $V(G) = [n]$ and let 
    $A = A(G, H, s, a, b)$ as in Construction \ref{construction:general-construction}. Let $C$ be a vertex-cover of $G$. Then 
    $A' := A - \{s'w_i : \nolinebreak i \in \nolinebreak C\}$ is $H$-free. 
    Furthermore, given any $z \in V(H)$, the three vertices $s,a,b$ can be chosen such that $s \neq z$.
\end{theorem}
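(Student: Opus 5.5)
We will argue by induction on $v(H)$, following the hint in the text that the 3-connected graphs and the cycles form the base case (Theorem \ref{theorem:main-theorem-H-equals-L}). In those two cases every choice of $s,a,b$ works. So for the extra clause we only need to pick $s \neq z$ with two neighbours $a,b$. This is possible: every vertex of a 2-connected graph has degree at least $2$, and there are at least $3$ vertices.

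Now suppose $H$ is 2-connected, but neither 3-connected nor a cycle. Then $H$ has a separator $S=\{u,v\}$ of size $2$. The key step is to choose the separation carefully. We take the SPQR / ear-type decomposition along $S$: $H = H_1 \cup H_2$ with $V(H_1)\cap V(H_2) = S$. We choose $S$ so that one side, say $H_2$, is minimal, i.e.\ $H_2$ together with a virtual edge $uv$ (or with an added $u,v$-path) is 2-connected and has no further 2-separator cutting it. Let $\hat H_1$ denote $H_1$ with the edge $uv$ added. Then $\hat H_1$ is 2-connected and smaller than $H$, by Lemma \ref{claim:generalized-glue-on-edges} applied in reverse via Menger. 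We apply induction to $\hat H_1$ with $z$ chosen to keep $s$ away from $S$ if possible, which is why the statement carries the clause ``$s\neq z$''. This gives $s,a,b$ lying in $H_1$, and we use the same $s,a,b$ for $H$. To honour a given $z$, we choose which side receives $s$ so that $z$ is avoided, passing $z$ (or a vertex of $S$) down to the induction as needed.

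To show that $A'$ is $H$-free, suppose $H'\subseteq A'$ is a copy of $H$ and let $P$ be a minimal support. By Lemma \ref{lem:|P|>=2}, $e(P)\ge 2$. Since $P$ is a forest, it has a cut vertex $t$, so by Lemma \ref{claim:separators of H'} the pair $\{s',w_t\}$ is a 2-separator of $H'$, and $H'$ splits into parts lying over the components of $P-t$. By Lemma \ref{lemma:intersections with H_{ij} are k-connected}, each intersection $V(H')\cap V(H_{ij})$ induces a 2-connected graph or has at most $2$ vertices. Lemma \ref{lemma:H'-contains-intermediate-vertices-generalized} then gives the following: if $H'$ meets both sides $f_{ij}(H_1\setminus S)$ and $f_{ij}(H_2\setminus S)$ of a basic copy, then $f_{ij}(S)$ is a 2-separator of $H'$. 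We use this to compare the 2-separator structure of $H'$ with that of $H$. The aim is to find a basic copy $H_{ij}$ such that $H'$ contains $f_{ij}(H_2)$ entirely. The minimal piece $H_2$ is then forced to be replicated exactly, because its pieces are 3-connected or cycles, as in the base-case argument. Contracting this piece to the edge $uv$ would then yield a copy of $\hat H_1$ inside $A(G,\hat H_1,s,a,b)$ minus the same star edges, contradicting the induction hypothesis. Alternatively, a counting argument on pieces of the decomposition gives $v(H')>v(H)$.

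The main obstacle is the middle step: controlling how $H'$ can distribute its 2-connected blocks (pieces of the decomposition) across different basic copies glued at $s'$ and at the $w_t$. It is conceivable that $H'$ takes one piece from $H_{ij}$ and another from $H_{jk}$. To handle this, I would compare the multiset of 3-connected components and cycles of $H'$ with those of $H$. I would use the cut at $\{s',w_t\}$ to show that the piece containing $s'$ cannot be completed without both star edges $s'w_i, s'w_j$ of some basic copy, one of which was deleted since $C$ is a vertex cover. Choosing $s,a,b$ inside a 3-connected or cycle piece, extremal in the decomposition tree, is what should make this counting go through.
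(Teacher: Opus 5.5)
\textbf{There is a genuine gap.} You never supply the step that derives a contradiction from a copy $H'\subseteq A'$, and the concrete route you sketch does not work.

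Your plan is to find $ij$ with $f_{ij}(H_2)\subseteq H'$ and contract it to the edge $uv$, obtaining a copy of $\hat H_1$ in $A(G,\hat H_1,s,a,b)$ minus the star edges. Nothing forces the copy of $H_2$ inside $H'$ (under an isomorphism $f:H\to H'$) to coincide with a basic $f_{ij}(H_2)$, with $f(u),f(v)$ matching $f_{ij}(u),f_{ij}(v)$. Even if it did, $f(H_1)$ may pass through internal vertices of $f_{k\ell}(H_2)$ for other edges $k\ell$. Those vertices have no counterpart in $A(G,\hat H_1,s,a,b)$, so the contraction does not land in the smaller construction. You concede that $H'$ may spread its pieces over several basic copies, but the proposed remedy (comparing multisets of 3-connected components and cycles, a count that ``should go through'') is a hope, not an argument.

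The choice of triple is also flawed:
\begin{itemize}
\item Induction on $\hat H_1$ only gives $sa,sb\in E(\hat H_1)$, so one of them may be the virtual edge $uv\notin E(H)$, and then $A(G,H,s,a,b)$ is not defined.
\item A single forbidden vertex cannot keep $s$ off both $u$ and $v$ and also off the externally prescribed $z$.
\item Nothing prevents $\{s,a\}$ from being a 2-cut of $H$ (e.g.\ $\{s,a\}=S$ when $uv\in E(H)$). Then the natural 2-cut count fails, because the new 2-cut $\{s',w_t\}$ of $H'$ may be the image of $\{s,a\}$.
\end{itemize}
Your closing remark (place $s,a,b$ in an extremal piece) is the right instinct, but it contradicts your earlier choice, which puts $s,a,b$ on the non-extremal side $H_1$.

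\textbf{What the paper does instead.} It fixes $s,a,b$ in a leaf $L$ (a 3-connected torso, or a triangle cut out of a leaf cycle) with $s\notin\{x,y\}$. It then proves the hereditary statement of Theorem \ref{theorem:2-connected main}: every such triple works.
\begin{itemize}
\item Given $H'$ with minimal support $P$, it pulls $H'$ back to $F:=H[L\cup\bigcup_{ij\in E(P)}f_{ij}^{-1}(V(H')\cap V(H_{ij}))]$. It shows $F$ is 2-connected via Lemmas \ref{lemma:intersections with H_{ij} are k-connected} and \ref{claim:generalized-glue-on-edges}, and notes that $H'$ contains a copy of $F$ inside $A(G,F,s,a,b)$ minus the same star edges.
\item If $F\neq H$, then $L$ is still a leaf of $F$, so induction applies to the induced subgraph $F$ with the same triple. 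The exception $V(F)=L$ with $\overline L$ 3-connected is settled directly with three internally disjoint paths. This is exactly why a bare existence statement is too weak as an inductive hypothesis.
\item If $F=H$, Lemma \ref{lemma:H'-contains-intermediate-vertices-generalized} maps 2-cuts of $H$ injectively to 2-cuts of $H'$. Injectivity uses that $\{s,a\},\{s,b\}$ are not 2-cuts (Lemma \ref{claim:sa-sb-not-2-cuts}). Meanwhile $\{s',w_t\}$ with $d_P(t)\geq 2$ is an extra 2-cut outside the image, a contradiction.
\item The ``furthermore'' clause comes from having two leaves with disjoint interiors (Proposition \ref{prop:2-connected leaves}), not from threading $z$ through the induction.
\end{itemize}
Your proposal contains none of the pullback $F$, the hereditary formulation, or the injectivity argument.
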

\noindent
The ``Furthermore" part of Theorem \ref{theorem:2-connected H} will be needed in Section \ref{sec:connected, delta >=2}.
Towards Theorem \ref{theorem:2-connected H}, we first establish some structural properties of $H$ in Section \ref{subsec:structure of H}. Theorem \ref{theorem:2-connected main} is then proved in Section \ref{subsec:2-connected main}.

\subsection{The structure of $H$}\label{subsec:structure of H}

The first main idea towards proving Theorem \ref{theorem:2-connected H} is to use Tutte's \cite{Tutte} decomposition of 2-connected graphs (see also \cite{Carmesin,CunninghamEdmonds}). This result states that every 2-connected graph has a tree-decomposition of adhesion 2 where every torso is 3-connected or a cycle. We now define these terms. 

\begin{definition}\label{def:tree decomposition}
    A {\em tree-decomposition} of a graph $H$ is a pair $(T,(B_t)_{t \in V(T)})$, where $T$ is a tree and $B_t \subseteq V(H)$ for all $t \in V(T)$, satisfying the following axioms:
    \begin{enumerate}
        \item $\bigcup_{t \in V(T)}B_t = V(H)$, and every edge of $H$ is contained in $B_t$ for some $t \in V(T)$.
        \item For every $v \in V(H)$, the set 
        $\{t \in V(T) : v \in B_t\}$ induces a connected subgraph (i.e., a subtree) of $T$.
    \end{enumerate}
\end{definition}

\noindent
The sets $B_t$ are called {\em bags}.
We also need the following notions. Let $\mathcal{D} = (T,(B_t)_{t \in V(T)})$ be a tree-decomposition.
\begin{itemize}
        \item $\mathcal{D}$ has {\em adhesion $2$} if $|B_t \cap B_{t'}| = 2$ for all $tt' \in E(T)$. 
        \item Assuming that $\mathcal{D}$ has adhesion 2, a {\em torso} of $\mathcal{D}$ is any graph obtained from the graph $H[B_t]$ (for a bag $B_t$) by adding an edge on the pair $B_t \cap B_{t'}$ for every $t' \in N_T(t)$ (if this edge is not already present).  
\end{itemize}


\noindent
The following is a basic and well-known property of tree-decompositions:
\begin{lemma}\label{lem:tree decomposition separator}
Let $(T,(B_t)_{t \in V(T)})$ be a 
tree-decomposition of a graph $H$. Let $tt' \in E(T)$, and let $C,C'$ be the two connected components of $T - \{tt'\}$. Then $B_t \cap B_{t'}$ separates $V := \bigcup_{s \in C} B_s$ and $V' := \bigcup_{s \in C'}B_s$ in $H$.
\end{lemma}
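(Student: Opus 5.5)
We need to prove Lemma \ref{lem:tree decomposition separator}: $B_t \cap B_{t'}$ separates $V$ and $V'$ in $H$. Put $X := B_t \cap B_{t'}$. By the definition of separation, it suffices to show that every $V,V'$-path in $H$ meets $X$. If such a path has a single vertex $v \in V \cap V'$, that already forces $v \in X$. So the plan is to prove two claims: first that $V \cap V' \subseteq X$, and second that every edge of $H$ with one endpoint in $V \setminus X$ and the other in $V' \setminus X$ is impossible.

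\textbf{Step 1: $V \cap V' \subseteq X$.} Let $v \in V \cap V'$. Then $v \in B_s$ for some $s \in C$ and $v \in B_{s'}$ for some $s' \in C'$. By axiom 2 of Definition \ref{def:tree decomposition}, the set $\{r : v \in B_r\}$ induces a subtree of $T$ containing both $s$ and $s'$. The unique $s,s'$-path in $T$ uses the edge $tt'$, because $s$ and $s'$ lie in different components of $T - \{tt'\}$. Hence the subtree contains both $t$ and $t'$, and so $v \in B_t \cap B_{t'} = X$.

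\textbf{Step 2: no crossing edges.} Let $Q$ be a $V,V'$-path in $H$. Since the bags cover $V(H)$ (axiom 1), every vertex of $H$ lies in $V \cup V'$. Assume for contradiction that $Q$ avoids $X$. By Step 1, $V \setminus X$ and $V' \setminus X$ are then disjoint, and every vertex of $Q$ lies in exactly one of them. Since $Q$ starts in $V$ and ends in $V'$, some edge $uw$ of $Q$ has $u \in V \setminus X$ and $w \in V' \setminus X$. By axiom 1, $\{u,w\} \subseteq B_r$ for some $r \in V(T)$. If $r \in C$, then $w \in V \cap V'$, so $w \in X$ by Step 1, a contradiction. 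If $r \in C'$, the symmetric argument gives $u \in X$, again a contradiction.

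Nothing in this is a real obstacle. The only point requiring care is the observation in Step 1 that the tree path between the two components passes through the edge $tt'$. This follows from $T - \{tt'\}$ having exactly the two components $C$ and $C'$.
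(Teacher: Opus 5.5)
Your proposal is correct and follows the paper's proof essentially step for step: the paper also proves $V \cap V' \subseteq B_t \cap B_{t'}$ via axiom 2, and then uses axiom 1 to show that every edge between $V$ and $V'$ meets $B_t \cap B_{t'}$. Your Step 2 additionally spells out, using $V \cup V' = V(H)$, why this edge condition forces every $V,V'$-path to meet $X$, a step the paper leaves implicit.
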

\begin{proof}
    First we observe that 
    $V \cap V' = B_t \cap B_{t'}$. 
    Clearly, $B_t \cap B_{t'} \subseteq V \cap V'$.
    Conversely, let $x \in V \cap V'$ and let $s \in C, s' \in C'$ such that $x \in B_s,B_{s'}$. Then $t,t'$ are on the unique $s,s'$-path in $T$. By Item 2 in Definition \ref{def:tree decomposition}, this means that $x \in B_t,B_{t'}$, as required.

    Next, let $xx' \in E(H)$ with $x \in V$ and $x' \in V'$. By Item 1 of Definition \ref{def:tree decomposition}, there exists $s \in V(T)$ with $x,x' \in B_s$. If $s \in C$ then $x' \in V \cap V' = B_t \cap B_{t'}$, and if $s \in C'$ then $x \in V \cap V' = B_t \cap B_{t'}$. In any case, every edge between $V$ and $V'$ intersects $B_t \cap B_{t'}$. 
\end{proof}

The following is Tutte's decomposition theorem for 2-connected graphs, stated in the modern language of tree-decompositions. This formulation appears in \cite{Carmesin}. The original theorem appears in \cite{CunninghamEdmonds,Tutte}.

\begin{theorem}\label{thm:Tutte}
Every 2-connected graph $H$ has a tree-decomposition of adhesion 2 such that each of its torsos is 3-connected or a cycle. 
\end{theorem}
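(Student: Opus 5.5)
The plan is to prove the statement by induction on $|V(H)|$, splitting $H$ along a $2$-separator and gluing the decompositions of the two pieces. No uniqueness or canonicity of the decomposition is needed, only existence.

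\textbf{Base case.} If $H$ is $3$-connected or a cycle, take $T$ to be a single vertex $t$ with $B_t = V(H)$. Adhesion $2$ holds vacuously (there are no tree edges), and the unique torso is $H$ itself. This covers every $2$-connected graph on $3$ vertices, since that graph is a triangle.

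\textbf{Splitting step.} Suppose $H$ is $2$-connected but neither $3$-connected nor a cycle. Then $|V(H)| \geq 4$ and there is a separator $\{x,y\}$ of $H$ of size exactly $2$. There is no separator of size at most $1$, and since $|V(H)| \geq 4$, the failure of $3$-connectivity means some $2$-set separates. Group the components of $H - \{x,y\}$ into two nonempty collections, with vertex sets $U_1, U_2$. Let $H_i$ be the subgraph induced on $U_i \cup \{x,y\}$, minus the edge $xy$ if present. Set $H_i^+ := H_i + xy$.

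Each $H_i^+$ has fewer vertices than $H$, because $U_{3-i} \neq \emptyset$. Each $H_i^+$ is also $2$-connected. To see this, remove any single vertex $v$ of $H_i^+$ and take two remaining vertices. They are joined by a path in $H - v$, since $H$ is $2$-connected. Every maximal excursion of this path into $U_{3-i}$ enters and leaves through $\{x,y\}$, and must use both $x$ and $y$, because it cannot return to the same vertex. Such an excursion can be replaced by the edge $xy$. Hence $H_i^+ - v$ is connected, and $|V(H_i^+)| \geq 3$. Verifying this $2$-connectivity is the one point needing care; the remainder is bookkeeping.

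\textbf{Gluing.} By induction, each $H_i^+$ has a tree-decomposition $(T_i,(B_t)_{t\in V(T_i)})$ of adhesion $2$ in which every torso is $3$-connected or a cycle. Since $xy \in E(H_i^+)$, Item 1 of Definition \ref{def:tree decomposition} gives a node $t_i \in V(T_i)$ with $\{x,y\} \subseteq B_{t_i}$. Form $T$ from $T_1 \cup T_2$ by adding the edge $t_1t_2$, keeping all bags.

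We check the axioms as follows.
\begin{itemize}
\item Every vertex and every edge of $H$ lies in $H_1$ or $H_2$, and hence in some bag. The edge $xy$, if it is in $H$, lies in $B_{t_1}$.
\item A vertex of $U_i$ appears only in bags of $T_i$, so its node set is still a subtree.
\item For $x$ and $y$, the node sets in $T_1$ and in $T_2$ are subtrees containing $t_1$ and $t_2$ respectively. They are therefore joined into one subtree by the edge $t_1t_2$.
\item The new adhesion is $B_{t_1} \cap B_{t_2} = \{x,y\}$, because $V(H_1^+) \cap V(H_2^+) = \{x,y\}$. So the adhesion is $2$.
\end{itemize}

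\textbf{Torsos.} Consider a node $t \in V(T_i)$. The graphs $H[B_t]$ and $H_i^+[B_t]$ differ at most in the edge $xy$. The neighbours of $t$ in $T$ are its neighbours in $T_i$, together with $t_{3-i}$ if $t = t_i$. If $xy$ is missing from $H$ but lies in $H_i^+[B_t]$, then $\{x,y\} \subseteq B_t$, and there are two cases.
\begin{itemize}
\item If $t = t_i$, the new tree edge $t_it_{3-i}$ has adhesion $\{x,y\}$, so the torso construction adds $xy$.
\item If $t \neq t_i$, the node sets of $x$ and of $y$ are subtrees containing both $t$ and $t_i$. So both $x$ and $y$ lie in the bag of the neighbour $t'$ of $t$ on the $t,t_i$-path in $T_i$. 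Then $B_t \cap B_{t'} = \{x,y\}$, and the torso again adds $xy$.
\end{itemize}
Conversely, the only adhesion edge added in $T$ but not already added in $T_i$ is $xy$ at $t_i$, which is already an edge of $H_i^+$. Hence every torso of the glued decomposition coincides with the corresponding torso for $H_i^+$, and is therefore $3$-connected or a cycle. This completes the induction.
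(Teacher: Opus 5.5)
Your proof is correct, but it is not the paper's route. The paper does not prove this statement at all; it imports Tutte's theorem as a black box, citing Tutte, Cunningham--Edmonds, and Carmesin's tree-decomposition formulation.

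You give instead the standard self-contained induction:
\begin{itemize}
\item split $H$ at a $2$-separator $\{x,y\}$ and add the virtual edge $xy$ to both sides;
\item check that each $H_i^+=H[U_i\cup\{x,y\}]+xy$ is a smaller $2$-connected graph, which is correct, since a maximal excursion into $U_{3-i}$ must leave through the other vertex of $\{x,y\}$ and can be shortcut by $xy$;
\item glue the two decompositions along an edge $t_1t_2$ between bags containing $\{x,y\}$.
\end{itemize}

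The one delicate step, the torso bookkeeping, is also right. If $xy\notin E(H)$ and $\{x,y\}\subseteq B_t$ with $t\neq t_i$, the subtree property puts $\{x,y\}$ into the bag of the neighbour of $t$ towards $t_i$. So $xy$ is already a virtual edge at $t$. The only new virtual edge is $xy$ at $t_i$, and it already lies in $H_i^+$. Hence every torso is unchanged.

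The comparison: the citation buys brevity and the full classical theory, including an essentially unique decomposition in which $2$-vertex bond pieces appear as a third type. Your argument buys an elementary proof of exactly the existence statement the paper uses, stated directly for simple graphs with only $3$-connected and cycle torsos. That is all Proposition~\ref{prop:2-connected leaves} needs: adhesion exactly $2$ and every bag of size at least $3$. Existence is also the most one can ask of this formulation. For $K_{2,3}$, no decomposition with only $3$-connected or cycle torsos is invariant under automorphisms.
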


Throughout the rest of this section, we assume that $H$ is a $2$-connected graph which is not $3$-connected or a cycle. 
We will apply Theorem \ref{thm:Tutte} and consider bags corresponding to leaves of the decomposition tree $T$.
We will (implicitly) see that if the torso of such a bag is a cycle, then one can refine the tree-decomposition such that some leaf torso is a triangle. Restricting such ``leaf cycles" to be triangles will simplify our analysis.
This leads to the following definition:
\begin{definition}\label{def:leaf}
    Let $H$ be a 2-connected graph. A 
    {\em leaf} in $H$ is a subset $L \subseteq V(H)$ satisfying one of the following: 
    \begin{itemize}
        \item $L = V(H)$ and $H$ is 3-connected or a cycle.
        \item $V(H) \setminus L \neq \emptyset$, and there exist $x, y \in L$ such that 
        $\{x, y\}$ separates $L \backslash \{x, y\}$ from $V(H) \backslash L$. Furthermore, 
        let $\overline{L}$ be the graph with vertex-set $L$ and edge-set 
        $E(L) \cup \{xy\}$. Then, $\overline{L}$ is 3-connected or a triangle. We call $\{x, y\}$ the {\em separator} of $L$.
    \end{itemize}
\end{definition}
\noindent
With a slight abuse of notation, we denote by $L$ both the vertex-set and the induced subgraph $H[L]$.

We will obtain leaves (as per Definition \ref{def:leaf}) by applying Theorem \ref{thm:Tutte} and taking bags which correspond to leaves of the tree $T$. We note that this is not a one-to-one correspondence: a leaf $L$ as per Definition \ref{def:leaf} need not come from a leaf-bag.
The advantage of Definition \ref{def:leaf} is that if $L$ is a leaf of $H$ (as per Definition \ref{def:leaf}), then $L$ is also a leaf of every 2-connected induced subgraph $F$ of $H$ with $L \subsetneq V(F)$. This will be important later on. Another advantage is that this definition is independent from the tree-decomposition.

\begin{proposition}\label{prop:2-connected leaves}
    Let $H$ be a 2-connected graph, and suppose that $H$ is neither 3-connected nor a cycle. Then $H$ has at least two leaves $L_1,L_2$. Furthermore, if $L_i$ has separator $\{x_i,y_i\}$, $i=1,2$, then 
    \begin{equation}\label{eq:distinct leaves are internally disjoint}
    (L_1 \setminus \{x_1, y_1\}) \cap (L_2 \setminus \{x_2, y_2\}) = \emptyset.
    \end{equation}
\end{proposition}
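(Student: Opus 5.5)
Apply Theorem \ref{thm:Tutte} to $H$ to get a tree-decomposition $\mathcal{D} = (T,(B_t)_{t \in V(T)})$ of adhesion 2 whose torsos are 3-connected or cycles. Among all such decompositions, take one with the minimum number of bags, so that no bag is contained in a neighbouring bag. Since $H$ is neither 3-connected nor a cycle, $T$ has at least two vertices: a one-bag decomposition would make $H$ itself the unique torso. Hence $T$ has at least two leaves $t_1,t_2$. For a leaf $t$ with unique neighbour $t'$, put $\{x,y\} := B_t \cap B_{t'}$. By Lemma \ref{lem:tree decomposition separator}, $\{x,y\}$ separates $B_t \setminus \{x,y\}$ from $V(H) \setminus B_t$. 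Moreover $V(H) \setminus B_t \neq \emptyset$, since otherwise $B_{t'} \subseteq B_t$ and we could contract the edge $tt'$, contradicting minimality. The torso at $t$ is exactly $\overline{L}$ for $L := B_t$, so it is 3-connected or a cycle.

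If the torso at a leaf bag is 3-connected, then $L = B_t$ is a leaf in the sense of Definition \ref{def:leaf}. If it is a cycle $x,y,v_1,\dots,v_r$ (with $xy$ the torso edge), we refine. Take $L := \{v_1, y', \dots\}$; concretely, for $r \ge 2$ choose three consecutive vertices $u,v,w$ of this cycle with $v \notin \{x,y\}$ and put $L := \{u,v,w\}$. Then $\{u,w\}$ separates $v$ from the rest, because $v$ has degree 2 in $H$: its only neighbours in $H$ are its torso-cycle neighbours, as $v$ lies in no other bag. Also $\overline{L}$ is a triangle, and $V(H) \setminus L \neq \emptyset$. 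If $r = 1$, then $L = B_t$ is already a triangle. So each leaf $t_i$ yields a leaf $L_i \subseteq B_{t_i}$ whose interior $L_i \setminus \{x_i,y_i\}$ is contained in $B_{t_i} \setminus B_{t_i'}$.

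For \eqref{eq:distinct leaves are internally disjoint}, observe that a vertex in $B_{t_1} \setminus B_{t_1'}$ lies only in the bag $B_{t_1}$, by the subtree axiom (Item 2 of Definition \ref{def:tree decomposition}), since $t_1$ is a leaf. Similarly for $t_2$. As $t_1 \neq t_2$, the interiors of $L_1$ and $L_2$ are disjoint.

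The main obstacle is the refinement step. One must check that $V(H)\setminus L\ne\emptyset$ and that the separator property holds in $H$ itself, not only in the torso. This also requires confirming that interior cycle vertices have degree 2 in $H$, which again uses the subtree axiom together with the fact that every edge lies in some bag. A second subtle point is that the two leaves must be distinct even when some bag containment occurs. The minimality assumption on the number of bags is there to handle this, and it should be verified that contracting an edge $tt'$ with $B_{t'} \subseteq B_t$ preserves both adhesion 2 and the torso condition. If that verification fails, I would instead argue directly that a leaf bag $B_t$ cannot satisfy $B_t = V(H)$ when $|V(T)|\ge2$ unless the decomposition is redundant.
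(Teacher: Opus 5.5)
Your proof is correct and follows essentially the same route as the paper: leaves of the Tutte decomposition tree give leaf bags, a cycle torso is refined to a triangle $\{u,v,w\}$ centred at a vertex $v\notin\{x,y\}$ (your degree-$2$ argument spells out the paper's ``it is easy to see''), and disjointness of the interiors follows from the subtree axiom. The minimum-bag device is unnecessary, because every bag has at least $3$ vertices (its torso is 3-connected or a cycle), so adhesion $2$ already rules out $B_{t'}\subseteq B_t$; this is exactly how the paper shows $V(H)\setminus B_t\neq\emptyset$.
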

\begin{proof}
    Take a tree-decomposition 
    $\mathcal{D} = (T,(B_t)_{t \in V(T)})$ of $H$ as in Theorem \ref{thm:Tutte}. If $|V(T)| = 1$, then $H$ is 3-connected or a cycle, contradicting the assumption of the proposition. Hence, $|V(T)| \geq 2$. Every tree on at least 2 vertices has at least 2 leaves.
    Consider any leaf $\ell \in V(T)$ of $T$, and take $L := B_{\ell}$. 
    Let $t$ be the unique neighbor of $\ell$ in $T$. 
    Since $\mathcal{D}$ has adhesion 2, we can write $B_{\ell} \cap B_t = \{x,y\}$. 
    By Lemma \ref{lem:tree decomposition separator},  
    $\{x,y\}$ separates 
    $B_{\ell} = L$ from 
    $\bigcup_{s \in V(T) \setminus \{\ell\}} B_s$. In particular, this means that 
    $L \setminus \{x,y\}$ is disjoint from $\bigcup_{s \in V(T) \setminus \{\ell\}} B_s$ (because otherwise there would be a trivial path between these two sets avoiding $x,y$). 
    Hence, 
    $\bigcup_{s \in V(T) \setminus \{\ell\}} B_s = (V(H) \setminus L) \cup \{x,y\}$, and $\{x,y\}$ separates $L \setminus \{x,y\}$ from $V(H) \setminus L$. 
    Also, since $|V(T)| \geq 2$, we have $V(H) \setminus L \neq \emptyset$; because otherwise, i.e., if $L = V(H)$, then the intersection of $L$ and each of its neighboring bags $B$ would have size $|B| \geq 3$, using that all torsos are cycles or 3-connected. But this would contradict that $\mathcal{D}$ has adhesion 2.
    
    Next, Theorem \ref{thm:Tutte} guarantees that $\overline{L} := L + \{xy\}$ is 3-connected or a cycle (because $\overline{L}$ is the torso corresponding to $B_\ell = L$). If $\overline{L}$ is 3-connected then we are done. Else (i.e., if $\overline{L}$ is a cycle) take three vertices $s,a,b \in L$ with $sa,sb \in E(H)$ and $s \notin \{x,y\}$, and replace $L$ with $L' := \{s,a,b\}$. It is easy to see that $L'$ is a leaf as per Definition \ref{def:leaf}, with separator $\{a,b\}$. 

    The above shows that $H$ has (at least) two leaves $L_1,L_2$, 
    and that each $L_i$ is contained in  $B_{\ell_i}$ for some distinct leaves $\ell_1,\ell_2$ of $T$. Let 
    $\{x_i,y_i\}$ be the separator of $L_i$. We saw above that $L_i \setminus \{x_i,y_i\}$ is disjoint from 
    $\bigcup_{s \in V(T) \setminus \{\ell_i\}} B_s$, and hence in particular disjoint from $L_{3-i}$. This proves \eqref{eq:distinct leaves are internally disjoint}.
\end{proof}

\noindent
Next, we collect some simple facts about leaves of 2-connected graphs.

\begin{lemma}\label{claim:H-L-x-y-connected}
    Let $H$ be a $2$-connected graph which is not $3$-connected or a cycle, and let $L \subseteq V(H)$ be a leaf of $H$ with separator $\{x, y\}$, as in Definition \ref{def:leaf}. 
    Then: 
    \begin{enumerate}
        \item 
        Let $H_1$ be the subgraph of $H$ induced by 
        $V(H) \setminus (L \setminus \{x,y\})$.
        Then $H_1 + \{xy\}$ is 2-connected, and hence 
        $H_1$ is connected. 
        \item For every $u \in V(H) \setminus L$, there is a $u,y$-path which avoids $L \setminus \{y\}$, as well as a $u,x$-path which avoids $L \setminus \{x\}$. 
    \end{enumerate}
\end{lemma}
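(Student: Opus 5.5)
Since $L$ is a leaf with $V(H)\setminus L \neq \emptyset$ (the first case of Definition \ref{def:leaf} is excluded because $H$ is neither 3-connected nor a cycle), we have a separator $\{x,y\}$ separating $L\setminus\{x,y\}$ from $V(H)\setminus L$. Write $I := L\setminus\{x,y\}$. Then $V(H_1) = (V(H)\setminus L)\cup\{x,y\}$ has at least $3$ vertices.

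For Item 1, we show that $H_1+\{xy\}$ has no cut vertex. Suppose for contradiction that some $v$ disconnects $H_1+\{xy\}$.

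If $v\notin\{x,y\}$, then $x,y$ lie in the same component of $(H_1+\{xy\})-v$, because they are adjacent there. Hence some component $Z$ of $H_1-v$ contains neither $x$ nor $y$, so $Z\subseteq V(H)\setminus L$. Every edge of $H$ leaving $Z$ goes either into $V(H_1)$ or into $I$. Edges into $I$ are impossible, since $\{x,y\}$ separates $I$ from $V(H)\setminus L$. Edges into $V(H_1)$ can only go to $v$, since $Z$ is a component of $H_1-v$. Thus $v$ is a cut vertex of $H$, contradicting 2-connectivity; here we use that $I$ or $y$ lies on the other side.

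If $v=x$ (the case $v=y$ is symmetric), then some component $Z$ of $H_1-x$ avoids $y$. As before, every edge from $Z$ goes to $x$ (edges to $I$ are excluded), so $x$ separates $Z$ from $y$ in $H$. This is again a contradiction.

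Hence $H_1+\{xy\}$ is 2-connected. Deleting the single edge $xy$ from a 2-connected graph leaves a connected graph, so $H_1$ is connected.

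For Item 2, let $u\in V(H)\setminus L$, so $u \in V(H_1)$ and $u\neq x,y$. Apply Menger (or the fan lemma) in the 2-connected graph $H_1+\{xy\}$ to obtain two internally vertex-disjoint $u,y$-paths. At most one of them uses the vertex $x$. Take one that avoids $x$. It also avoids the edge $xy$, so it is a path in $H_1$, and $V(H_1)\cap L=\{x,y\}$. Therefore it avoids $L\setminus\{y\}$. The case of $x$ is symmetric.

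The only subtle point is the cut-vertex case analysis in Item 1, namely verifying that no edge escapes from $Z$ into $I$. This follows directly from the separator property in Definition \ref{def:leaf}.
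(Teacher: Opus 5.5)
Your proof is correct. Item 2 is exactly the paper's argument, but for Item 1 you take a different route.

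\textbf{The paper's route.} It argues via Menger's theorem. For non-adjacent $u,v\in V(H_1)$ it takes two internally vertex-disjoint $u,v$-paths in $H$. It observes that a path entering $L\setminus\{x,y\}$ must pass through both $x$ and $y$. It then shortcuts that excursion by the edge $xy$, which gives two internally vertex-disjoint paths in $H_1+\{xy\}$.

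\textbf{Your route.} You argue by contradiction through separators. A vertex $v$ whose deletion disconnects $H_1+\{xy\}$ leaves a component $Z$ of $H_1-v$ with $Z\subseteq V(H)\setminus L$, while some vertex of $\{x,y\}\setminus\{v\}$ lies outside $Z$. The leaf separator forbids edges from $Z$ into $L\setminus\{x,y\}$, so the $H$-neighbourhood of $Z$ lies in $\{v\}$. This makes $v$ a cut vertex of $H$, a contradiction.

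\textbf{Comparison.} Your version uses only the edge-level consequence of the separator property and needs no path surgery. In particular, you avoid checking that only one of the two paths can make such an excursion and that the rerouted paths stay internally disjoint. The paper's version is constructive, and it matches the rerouting style reused in Lemma~\ref{claim:three-VDPs}.

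\textbf{One small point.} Under the paper's definition of $2$-connectivity, the empty set must also fail to be a separator, so $H_1+\{xy\}$ itself must be connected. Your argument does imply this: you show $(H_1+\{xy\})-v$ is connected for every $v$, and $|V(H_1)|\ge 3$. A disconnected graph on at least three vertices always has a vertex whose deletion leaves it disconnected. This deserves an explicit sentence.
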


\begin{proof}
    We start with Item 1. 
    We need to show that $H_1 + \{xy\}$ has two internally vertex-disjoint $u,v$-paths for every pair of non-adjacent distinct vertices $u,v \in V(H_1)$. 
    In particular, 
    $\{u,v\} \neq \{x,y\}$. 
    Since $H$ is 2-connected, there are two internally vertex-disjoint $u,v$-paths $P,Q$ in $H$. If one of these paths, say $P$, has vertices in 
    $L \setminus \{x,y\}$, then $P$ must contain both $x$ and $y$, since $\{x,y\}$ separates 
    $L \setminus \{x,y\}$ from $V(H) \setminus L$. We can then shorten $P$ by replacing the segment of $P$ between $x$ and $y$ with the edge $xy$. This gives two internally vertex-disjoint $u,v$-paths in $H_1 + \{xy\}$, as required.

    Item 2 follows from Item 1. Indeed, by Item 1, there are two internally vertex-disjoint $u,y$-paths in $H_1 + \{xy\}$. Only one of these paths can contain the vertex $x$. The other path avoids $x$ (and hence the edge $xy$), giving a $u,y$-path which avoids $L \setminus \{y\}$. The same argument gives a $u,x$-path which avoids 
    $L \setminus \{x\}$. 
\end{proof}

\begin{lemma}\label{claim:three-VDPs}
    Let $H$ be a $2$-connected graph which is not $3$-connected or a cycle, and let $L \subseteq V(H)$ be a leaf of $H$. If $\overline{L}$ is 3-connected then for all distinct vertices $u, v \in L$, there exist three internally vertex-disjoint $u,v$-paths in $H$.
\end{lemma}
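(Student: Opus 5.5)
We want three internally vertex-disjoint $u,v$-paths in $H$ for distinct $u,v \in L$ when $\overline{L} = H[L] + \{xy\}$ is 3-connected. The plan is to take three such paths in $\overline{L}$ and, if one of them uses the extra edge $xy$, replace that edge by an $x,y$-path running through $V(H)\setminus(L\setminus\{x,y\})$.

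\textbf{Step 1 (paths in $\overline{L}$).} Since $\overline{L}$ is 3-connected, Menger's theorem gives three internally vertex-disjoint $u,v$-paths $Q_1,Q_2,Q_3$ in $\overline{L}$. These paths are pairwise edge-disjoint, except possibly when $uv$ is itself an edge and one path is the single edge $uv$. Hence the edge $xy$ lies on at most one of them. If $xy \in E(H)$, or no $Q_i$ uses $xy$, then all three are paths in $H[L] \subseteq H$ and we are done. So assume $xy \notin E(H)$ and, without loss of generality, $Q_1$ uses $xy$.

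\textbf{Step 2 (the detour).} Since $L$ is a leaf with $L \ne V(H)$, we have $V(H)\setminus L \neq \emptyset$. Pick $w \in V(H)\setminus L$. By Item 1 of Lemma \ref{claim:H-L-x-y-connected}, the graph $H_1 = H[V(H)\setminus(L\setminus\{x,y\})]$ is connected. It contains $x$, $y$ and $w$, so it contains an $x,y$-path $R$. All internal vertices of $R$ lie in $V(H)\setminus L$: every vertex of $H_1$ other than $x,y$ is outside $L$, and $R$ visits $x$ and $y$ only as its endpoints. Item 2 of the same lemma gives the same conclusion, for instance by concatenating a $w,x$-path avoiding $L\setminus\{x\}$ with a $w,y$-path avoiding $L\setminus\{y\}$ and shortcutting.

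\textbf{Step 3 (splice and check).} Replace the edge $xy$ in $Q_1$ by $R$ to obtain a $u,v$-walk $Q_1'$ in $H$. The internal vertices of $R$ lie outside $L$, while every vertex of $Q_1$ lies in $L$, so $Q_1'$ is a path. For the same reason, the new internal vertices of $Q_1'$ are disjoint from $V(Q_2)\cup V(Q_3) \subseteq L$. Therefore $Q_1',Q_2,Q_3$ are three internally vertex-disjoint $u,v$-paths in $H$.

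The only delicate point is Step 2, namely confirming that the detour $R$ exists and avoids $L$ internally; Lemma \ref{claim:H-L-x-y-connected} supplies exactly this.
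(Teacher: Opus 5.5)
Your proof is correct and follows the paper's argument essentially step for step: take three internally vertex-disjoint $u,v$-paths in $\overline{L}$, observe that at most one of them uses $xy$, and, when $xy \notin E(H)$, replace that edge by an $x,y$-path in the connected graph $H_1$ from Item 1 of Lemma~\ref{claim:H-L-x-y-connected}. You also check explicitly that the detour's internal vertices lie outside $L$, so the spliced walk is a path and stays internally disjoint from the other two; the paper leaves this implicit, and your auxiliary vertex $w$ and the remark about the edge $uv$ are unnecessary but harmless.
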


\begin{proof}
    By assumption, there exist three internally vertex-disjoint $u,v$-paths $P_1, P_2, P_3$ in $\overline{L}$. Note that at most one such path uses the edge $xy$.
    If none of the paths $P_1,P_2,P_3$ uses $xy$ then these paths lie in $L$ (and thus, in $H$), and we are done. So suppose, without loss of generality, that $P_3$ uses $xy$. 
    Also, we may assume that $xy \notin E(H)$ (else we are done).
    By Item 1 of Lemma \ref{claim:H-L-x-y-connected}, $H_1 + \{xy\}$ is 2-connected, and hence $H_1$ is connected. 
    Let $Q$ be an $x,y$-path in $H_1$. In particular, $Q$ does not use the edge $xy$ (since $xy \notin E(H)$).
    By replacing the edge $xy$ in $P_3$ with the path $Q$, we get a $u,v$-path $P'_3$. Note that $P'_3$ is internally vertex-disjoint from $P_1,P_2$, because $P_3$ is internally vertex-disjoint from $P_1,P_2$ and $P_1,P_2 \subseteq L$. So $P_1,P_2,P'_3$ give the desired result.  
\end{proof}


\begin{lemma}\label{lemma:sv-not-2-cut-if-H-is-3-connected}
    Let $H$ be a $2$-connected graph which is not $3$-connected or a cycle. Let $L \subseteq V(H)$ be a leaf of $H$ with separator $\{x,y\}$, and suppose that $\overline{L}$ is 3-connected.
    Let $s \in L \setminus \{x,y\}$ and let $v \in V(H)$. Then, $\{s, v\}$ is not a separator of $H$.
\end{lemma}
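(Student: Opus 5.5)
We argue by contradiction. Suppose $\{s,v\}$ is a separator of $H$, so $H-\{s,v\}$ has at least two components. Since $H$ is 2-connected, $v \neq s$ and each component of $H - \{s,v\}$ contains a neighbour of $s$ and a neighbour of $v$. The plan is to show that all vertices of $H-\{s,v\}$ lie in one component. We do this in two steps: first show that $L \setminus \{s,v\}$ is connected in $H - \{s,v\}$, and then show that every vertex outside $L$ is joined to this set by a path avoiding $s$ and $v$.

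For the first step, we use the 3-connectivity of $\overline{L}$. The graph $\overline{L} - \{s,v\}$ is connected, since $\overline{L}$ is 3-connected (when $v \notin L$ we remove only $s$, which is even easier). The only possible difference from $H[L]-\{s,v\}$ is the edge $xy$, in case $xy \notin E(H)$. If a path in $\overline{L} - \{s,v\}$ uses $xy$, then neither $x$ nor $y$ equals $v$. We replace the edge $xy$ by an $x,y$-path in $H_1$, which exists by Item 1 of Lemma \ref{claim:H-L-x-y-connected}. This path avoids $s$ because $s \in L \setminus \{x,y\}$. It also needs to avoid $v$, which is automatic if $v \in L \setminus\{x,y\}$.

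The delicate case is $v \in V(H)\setminus L$. Here we need an $x,y$-path in $H_1 - v$, and $H_1$ may not be 2-connected, only $H_1 + xy$ is. In that case we simply avoid the edge $xy$. The set $L \setminus \{s\}$ is connected in $\overline{L} - s$. If $\overline{L}-s-xy$ disconnects into one part containing $x$ and another containing $y$, we can still treat the $x$-side and the $y$-side separately and connect the outside vertices to either side. So in all cases $L\setminus\{s,v\}$ splits into at most a few pieces, each containing $x$ or $y$, and is a single piece whenever $v \in L$.

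For the second step, take $u \in V(H) \setminus (L \cup \{v\})$. By Item 2 of Lemma \ref{claim:H-L-x-y-connected}, there is a $u,x$-path avoiding $L\setminus\{x\}$ and a $u,y$-path avoiding $L \setminus\{y\}$; both avoid $s$. We need one of them to avoid $v$ as well, and $v$ is the main obstacle. I would apply 2-connectivity of $H_1+xy$ in fan form: it gives two internally disjoint paths from $u$ to $\{x,y\}$ in $H_1+xy$ that end at different vertices, so at most one of them contains $v$ (when $v \notin \{x,y\}$; when $v \in \{x,y\}$, the other endpoint's path avoids $v$). The surviving path contains no $xy$ edge, since it ends upon first reaching $\{x,y\}$. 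It therefore lies in $H$ and joins $u$ to $x$ or $y$ in $H-\{s,v\}$.

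Combining the two steps, every vertex of $H-\{s,v\}$ reaches $\{x,y\}\setminus\{v\}$, and these vertices are connected via $L$ when $v \in L$. When $v \notin L$, $x$ and $y$ are connected either through $\overline{L}-s$ without using $xy$, or, if $xy\in E(H)$, directly. The remaining subcase is $xy \notin E(H)$ with $xy$ a bridge of $\overline{L}-s$. This contradicts 3-connectivity of $\overline{L}$ (removing $s$ and one of $x,y$ would isolate part), unless $|L|$ is tiny, which is handled directly. Hence $H-\{s,v\}$ is connected, the required contradiction.
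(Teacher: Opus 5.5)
Your proof is correct and follows essentially the same route as the paper. The 3-connectivity of $\overline{L}$ connects $L\setminus\{s,v\}$: the virtual edge $xy$ is either absent, replaced by an $x,y$-path in $H_1$, or not a bridge of $\overline{L}-s$. The 2-connectivity of $H_1+xy$ then attaches every vertex outside $L$ to $\{x,y\}\setminus\{v\}$. The paper organizes this as three cases on the location of $v$, using Lemma~\ref{claim:three-VDPs} and the connectivity of $(H_1+xy)-v$ where you use the fan lemma. Your hedging about $\overline{L}-s-xy$ splitting and about a ``tiny'' $L$ is unnecessary: $\overline{L}-s$ is 2-connected on at least three vertices, so it has no bridges and no exceptional case arises.
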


\begin{proof}

    We prove that $H - \{s,v\}$ is connected in each of the following cases. As in Lemma \ref{claim:H-L-x-y-connected}, let $H_1$ be the subgraph of $H$ induced by 
    $V(H) \setminus (L \setminus \{x,y\})$.
    
    \textbf{Case 1: $v \in L \backslash \{x, y\}$.} 
    By Item 1 of Lemma \ref{claim:H-L-x-y-connected}, 
    $H_1$ is connected. 
    Also, for each $u \in L \setminus \{s,v,y\}$, there are three internally vertex-disjoint $u,y$-paths in $H$ (by Lemma \ref{claim:three-VDPs}). At least one of these paths avoids the set $\{s,v\}$. So we see that every $u \in V(H) \setminus \{s,v\}$ is connected by a path to $y$ in $H - \{s,v\}$. Hence, $H - \{s,v\}$ is connected.
    
    \textbf{Case 2: $v = x$ or $v = y$.} Without loss of generality, suppose that $v = x$. We claim that every vertex $u \in V(H) \setminus \{s, v\}$ is connected via a path to $y$ in $H - \{s,v\}$. This is trivial for $u=y$, and if $u \in V(H) \setminus L$ then this holds by Item 2 of Lemma \ref{claim:H-L-x-y-connected}. Indeed, this lemma gives a $u,y$-path in $H$ which avoids $L \setminus \{y\}$ and hence avoids $\{s,x\} = \{s,v\}$. Suppose now that $u \in L \setminus \{y\}$. As in Case 1, Lemma \ref{claim:three-VDPs} gives three internally vertex-disjoint $u,y$-paths in $H$, one of which avoids \nolinebreak  $\{s,x\}$, \nolinebreak as \nolinebreak required. 
    
    
    \textbf{Case 3: $v \in V(H) \backslash L$.} 
    First, we claim that every vertex 
    $w \in L \backslash \{s, x, y\}$ is connected to both $x$ and $y$ in $L \backslash \{s\}$. Indeed, as $\overline{L}$ is 3-connected, there exist three internally vertex-disjoint $w,x$-paths in $\overline{L}$. At most one such path can use the edge $xy$, and at most one such path can use the vertex $s$. Thus, there exists at least one $w,x$-path in $L \backslash \{s\}$, as desired. An identical argument gives a $w,y$-path in $L \backslash \{s\}$. 
    Note that $|L| \geq 4$ (since $\overline{L}$ is 3-connected) and so 
    $L \backslash \{s, x, y\} \neq \emptyset$.
    Hence, we showed that the vertices of 
    $L \setminus \{s\}$ are all in the same connected component of $H - \{s,v\}$.

    In order to show that $H - \{s,v\}$ is connected, it now suffices to show that every
    $w \in V(H) \backslash (L \cup \{v\})$ is connected by a path in $H - \{s, v\}$ to $x$ or $y$. 
    And indeed, by Item 1 of Lemma \ref{claim:H-L-x-y-connected}, $H_1 + \{xy\}$ is 2-connected, and hence $(H_1 + \{xy\}) - \{v\}$ is connected. This means that $H_1 - \{v\}$ has at most two connected components, and each of these components contains $x$ or $y$. This proves our claim.
\end{proof}

\begin{lemma}\label{claim:sa-sb-not-2-cuts}
    Let $H$ be a $2$-connected graph which is not $3$-connected or a cycle, and let $L \subseteq V(H)$ be a leaf of $H$ with separator $\{x, y\}$.
    Let $s \in L \setminus \{x,y\}$ and let $a \in L$ be a neighbor of $s$.
    Then $\{s, a\}$ is not a separator of $H$.
\end{lemma}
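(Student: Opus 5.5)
We split according to the two possibilities in Definition \ref{def:leaf}: either $\overline{L}$ is 3-connected, or $\overline{L}$ is a triangle. The leaf cannot be all of $V(H)$, since $H$ is not 3-connected or a cycle, so the separator $\{x,y\}$ exists.

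\textbf{Case 1: $\overline{L}$ is 3-connected.} This is immediate from Lemma \ref{lemma:sv-not-2-cut-if-H-is-3-connected}. Apply it with $s \in L \setminus \{x,y\}$ and $v := a$. It says that $\{s,v\}$ is not a separator of $H$ for every $v \in V(H)$, in particular for $v = a$.

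\textbf{Case 2: $\overline{L}$ is a triangle.} Then $L = \{s,x,y\}$, since $s \in L \setminus \{x,y\}$ and $|L|=3$. So $a \in \{x,y\}$, and by symmetry we may take $a = x$. We must show $H - \{s,x\}$ is connected. Every vertex of $H - \{s,x\}$ lies in $(V(H)\setminus L) \cup \{y\}$.

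For $u \in V(H) \setminus L$, Item 2 of Lemma \ref{claim:H-L-x-y-connected} gives a $u,y$-path in $H$ avoiding $L \setminus \{y\} = \{s,x\}$. This path lies in $H - \{s,x\}$. Hence every remaining vertex is joined to $y$ in $H - \{s,x\}$, so $H - \{s,x\}$ is connected.

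We also need $H - \{s,x\}$ to be nonempty, so that "connected" is meaningful. This holds because $V(H) \setminus L \neq \emptyset$, and in any case $y$ survives. (Note that $H - \{s,x\}$ consisting of the single vertex $y$ would be connected anyway.)

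The only potential subtlety is confirming that the triangle case really forces $L=\{s,x,y\}$ with $a\in\{x,y\}$, and that Item 2 of Lemma \ref{claim:H-L-x-y-connected} applies to vertices outside $L$. Both follow directly from the definitions, so no real obstacle is expected.
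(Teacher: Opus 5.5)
Your proof is correct and essentially matches the paper's: Case 1 is handled by Lemma \ref{lemma:sv-not-2-cut-if-H-is-3-connected} in both. In the triangle case the paper notes that $H-\{s,x\} = (H_1+\{xy\})-\{x\}$ and applies the 2-connectivity from Item 1 of Lemma \ref{claim:H-L-x-y-connected}, whereas you use Item 2 of that lemma (which the paper derives from Item 1) to route every remaining vertex to $y$ while avoiding $\{s,x\}$ — the same idea.
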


\begin{proof}
    If $\overline{L}$ is 3-connected then we are done by Lemma 
    \ref{lemma:sv-not-2-cut-if-H-is-3-connected}. So suppose that $\overline{L}$ is a triangle. In particular, $a \in \{x,y\}$; say $a=x$. 
    Let $H_1$ be the subgraph of $H$ induced by 
    $V(H) \setminus (L \setminus \{x,y\})$. By Item 1 of Lemma \ref{claim:H-L-x-y-connected}, 
    $H_1 + \{xy\}$ is 2-connected. 
    But $H - \{s,x\} = (H_1 + \{xy\}) - \{x\}$, so 
    $H - \{s,x\}$ is connected, as required. 
\end{proof}

\subsection{Proof of Theorem \ref{theorem:2-connected H}}\label{subsec:2-connected main}

\noindent
We will prove Theorem \ref{theorem:2-connected H} in the following form:
\begin{theorem}\label{theorem:2-connected main}
    Let $H$ be a 2-connected graph. 
    Let $s,a,b \in V(H)$ be distinct vertices such that $sa,sb \in E(H)$, and moreover, if $H$ is neither 3-connected nor a cycle then there exists a leaf $L$ of $H$ with separator $\{x,y\}$ such that $s,a,b \in L$ and
    $s \notin \{x,y\}$. Let $G$ be a forest with $V(G) = [n]$ and let $A = A(G,H,s,a,b)$. Let $C$ be a vertex cover of $G$. Then $A' := A - \{s'w_i : i\in C\}$ is $H$-free. 
\end{theorem}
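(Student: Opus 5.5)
We argue by induction on $v(H)$. When $H$ is 3-connected or a cycle, the statement is exactly Theorem \ref{theorem:main-theorem-H-equals-L}. So assume $H$ is 2-connected but neither 3-connected nor a cycle, and fix the leaf $L$ with separator $\{x,y\}$ such that $s,a,b\in L$ and $s\notin\{x,y\}$.

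Suppose for contradiction that $H'\subseteq A'$ is a copy of $H$. Let $P$ be a minimal support for $H'$. By Lemma \ref{lem:|P|>=2}, $e(P)\ge 2$. Since $P$ is a forest with no isolated vertices (Fact \ref{fact:minimality of P}), it has a cut vertex $r$. By Lemma \ref{claim:separators of H'}, $\{s',w_r\}$ separates $H'$, and each component class $D_k$ meets $V(H')$. Because $H'$ is 2-connected, both $s'$ and $w_r$ lie in $V(H')$ and $\{s',w_r\}$ is a 2-separator of $H'$.

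The idea is to locate copies of the leaf $L$ inside $H'$. Leaves of $P$ help here. Take a leaf edge $ij$ of $P$ with $i$ a leaf of $P$. Then $\{s',w_j\}$ separates $D:=\{w_i\}\cup (V(H_{ij})\setminus\{s',w_i,w_j\})$ from the rest of $H'$, and $V(H')\cap D\ne\emptyset$. Applying this to every leaf of $P$ (there are at least two) yields at least two "pendant pieces" $X_{ij}:=V(H')\cap V(H_{ij})$. Each piece is attached to the rest of $H'$ through the 2-set $\{s',w_j\}$. The intersection of $H'$ with $H_{ij}$ plus the virtual edge $s'w_j$ is 2-connected, by the argument of Lemma \ref{lemma:intersections with H_{ij} are k-connected} applied to $H'+s'w_j$. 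Since $i$ is a leaf of $P$ and $i$ lies in the vertex cover unless $j$ does, we can choose the leaf so that $s'w_i$ is absent, or so that $s'w_j$ is absent. The absent edge means the piece is a proper subgraph of $H_{ij}$.

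The main step is a counting/structural comparison. Using the Tutte decomposition (Theorem \ref{thm:Tutte}), each 2-separator of $H'$ of the form $\{s',w_j\}$ corresponds to an adhesion set. We then show that some leaf $L'$ of $H'$ (Definition \ref{def:leaf}) sits entirely inside one pendant piece $X_{ij}$, and that $f_{ij}^{-1}(L')$ is a leaf-like subset of $H$ not using $s$ correctly.

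The key lemmas for this are Lemmas \ref{lemma:sv-not-2-cut-if-H-is-3-connected} and \ref{claim:sa-sb-not-2-cuts}. They say that $\{s,a\}$, $\{s,b\}$, and (when $\overline L$ is 3-connected) $\{s,v\}$ are never separators of $H$. Transferring through $f_{ij}$ with Lemma \ref{lemma:H'-contains-intermediate-vertices-generalized}, they show that $s'=f_{ij}(s)$, together with $w_i$ or $w_j$, cannot be a 2-separator of $H'$ that cuts off material of $H_{ij}$ containing $f_{ij}(L)\setminus\{s'\}$. This forces the copy $f_{ij}(L)$ to be either absent from $H'$ or only partially present.

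For the induction we use the hereditary property of leaves: if $L$ is a leaf of $H$, it is also a leaf of every 2-connected induced subgraph $F$ of $H$ with $L\subsetneq V(F)$. Concretely, we delete $L\setminus\{x,y\}$ from $H$, add the edge $xy$ to get $H_1+xy$, which is 2-connected by Lemma \ref{claim:H-L-x-y-connected}, and compare the leaf structure of $H'$ with that of $H$. Counting leaves of each isomorphism type, $H'$ must contain at least as many copies of $\overline{L}$-type leaves as $H$. A pendant piece that lacks the edge $s'w_i$ cannot host the image of $L$ with $s$ mapped to $s'$.

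I expect the main obstacle to be making this leaf-counting rigorous. One has to show that the leaves of $H'$ inside $H_{ij}$ correspond bijectively to leaves of $H$ other than the one through $s$. Then the missing edge at $s'$ accounts for one fewer leaf, or a smaller vertex count, which contradicts $H'\cong H$. The argument needs the choice of a minimal, extremal leaf $L$ of $H$, so that the comparison is between isomorphic leaf types. If this bookkeeping fails, I would fall back to a direct vertex-count argument, as in the cycle case of Theorem \ref{theorem:main-theorem-H-equals-L}. There, the piece $f_{ij}(L\setminus\{x,y\})$ is shown to be entirely contained in $H'$, and then $|V(H')|>|V(H_{ij})|$ follows.
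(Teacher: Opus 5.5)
Your proposal has a genuine gap: the inductive step is never carried out. The decisive ``leaf-counting'' comparison is only announced, and you concede that making it rigorous is the main obstacle. As sketched, it cannot close the argument. Since $H'\cong H$, the numbers of leaves of each type in $H$ and $H'$ agree trivially. You would need an injection from leaves of $H$ into leaves of $H'$ that misses something, and nothing in the sketch produces one. In particular, $H'$ may use only fragments of many basic copies $H_{ij}$, so a leaf of $H$ need not be carried to a leaf of $H'$ by any $f_{ij}$.

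Your use of Lemmas \ref{lemma:sv-not-2-cut-if-H-is-3-connected} and \ref{claim:sa-sb-not-2-cuts} is also inverted. By Lemma \ref{claim:separators of H'}, $\{s',w_r\}$ \emph{is} a 2-separator of $H'$ whenever $d_P(r)\ge 2$. What these lemmas really give is that this separator is not the $f_{ij}$-image of any 2-cut of $H$.

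The induction hypothesis is never invoked on a well-defined smaller instance. $H_1+xy$ does not contain $s$ (since $s\in L\setminus\{x,y\}$) and is not a subgraph of $H$. So there is no construction $A(G,H_1+xy,s,a,b)$ inside $A$ in which $H'$ would yield a forbidden copy.

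The fallback vertex count works only for cycles, where degree-$2$ propagation forces $V(H_{ij})\setminus\{s'\}\subseteq V(H')$. For general $H$, containing $f_{ij}(L\setminus\{x,y\})$ forces nothing of the sort. A minor point: before picking a cut vertex of $P$ you must show $P$ is connected, e.g.\ by Lemma \ref{claim:separators of H'} with $R=\emptyset$, since $2K_2$ has no cut vertex.

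You are missing two ideas. First, induct on the part of $H$ that $H'$ actually uses. Set $F:=H\bigl[L\cup\bigcup_{ij\in E(P)}f_{ij}^{-1}(V(H')\cap V(H_{ij}))\bigr]$.
\begin{itemize}
\item Each $A[V(H')\cap V(H_{ij})]$ is 2-connected (Lemma \ref{lemma:intersections with H_{ij} are k-connected}) and meets $f_{ij}(\{s,a,b\})\subseteq f_{ij}(L)$ in at least two vertices. Gluing these pieces along $L$ with Lemma \ref{claim:generalized-glue-on-edges} shows $F$ is 2-connected.
\item If $F\neq H$, then $L$ is still a leaf of $F$ containing $s,a,b$. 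Moreover, $A(G,F,s,a,b)$ is the subgraph of $A$ induced by $\{s',w_1,\dots,w_n\}\cup\bigcup_{ij}f_{ij}(V(F))$, and it contains $H'$. So the copy of $F$ inside $H'$ contradicts the induction hypothesis applied to $F$.
\item The one exception is $V(F)=L$ with $\overline{L}$ 3-connected. This case is handled directly, as in Theorem \ref{theorem:main-theorem-H-equals-L}, using Lemma \ref{claim:three-VDPs}.
\end{itemize}

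Second, if $F=H$, count 2-cuts instead of leaves. Every 2-cut $\{u,v\}$ of $H$ separates some $p$ from some $q\in\{s,a\}$ with $f_{ij}(p),f_{ij}(q)\in V(H')$ for a common $ij\in E(P)$. Hence $\{f_{ij}(u),f_{ij}(v)\}$ is a 2-cut of $H'$ by Lemma \ref{lemma:H'-contains-intermediate-vertices-generalized}. This map is injective because $\{s,a\}$ and $\{s,b\}$ are not 2-cuts of $H$. Finally, $\{s',w_r\}$ is a 2-cut of $H'$ outside the image, so $H'$ has strictly more 2-cuts than $H$, a contradiction.
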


\noindent
First, let us observe that Theorem \ref{theorem:2-connected main} implies Theorem \ref{theorem:2-connected H}.
\begin{proof}[Proof of Theorem \ref{theorem:2-connected H}]
We only need to show that there is a choice of 
$s,a,b \in V(H)$ such that the conditions of Theorem \ref{theorem:2-connected main} are satisfied and $s \neq z$. If $H$ is 3-connected or a cycle then simply take an arbitrary $s \in V(H) \setminus \{z\}$ and take $a,b$ to be two distinct neighbors of $s$. Suppose now that $H$ is neither 3-connected nor a cycle. By Proposition \ref{prop:2-connected leaves}, $H$ has two leaves $L_1,L_2$. Let $\{x_i,y_i\}$ denote the separator of $L_i$, $i=1,2$. Since the sets 
$L_1 \setminus \{x_1,y_1\}, L_2 \setminus \{x_2,y_2\}$ are disjoint (by Proposition \ref{prop:2-connected leaves}), there exists $i \in \{1,2\}$ such that 
$z \notin L_i \setminus \{x_i,y_i\}$. Now pick $s$ to be an arbitrary element of $L_i \setminus \{x_i,y_i\}$, and pick $a,b$ to be two distinct neighbors of $s$ in $L_i$; this is possible since $\overline{L_i}$ is 3-connected or a triangle.
\end{proof}

\begin{proof}[Proof of Theorem \ref{theorem:2-connected main}]
    The proof is by induction on $|V(H)|$. The base cases are when $H$ is 3-connected or a cycle, and these are handled by Theorem \ref{theorem:main-theorem-H-equals-L}. Hence, we assume from now on that $H$ is neither 3-connected nor a cycle. Thus, by the choice of $s,a,b$, there exists a leaf $L$ of $H$ with separator $\{x,y\}$ such that $s,a,b \in L$ and $s \notin \{x,y\}$. Note that if $\overline{L}$ is a triangle then 
    $\{x,y\} = \{a,b\}$.

    Suppose for the sake of contradiction that 
    $A' = A - \{s'w_i : i \in C\}$ contains a copy of $H$, denoted $H'$. 
    Let $P \subseteq G$ be a minimal support for $H'$, as in Definition \ref{def:minimal edge-set}. 
    By Lemma \ref{lem:|P|>=2} we have 
    $e(P) \geq 2$, and by Fact \ref{fact:minimality of P}, $P$ has no isolated vertices (here we use that $\delta(H') = \delta(H) \geq 2$, as $H$ is 2-connected).
    Also, $P$ is a forest (as a subgraph of $G$). 
    We now establish some basic properties of $H'$ and $P$.

    \begin{claim}\label{claim:simple properties of H',P}
        {\color{white} text}
        \begin{enumerate}
            \item $P$ is connected.
            \item $s' \in V(H')$.
            \item For every $i \in V(P)$ with 
            $d_P(i) \geq 2$, it holds that $w_i \in V(H')$ and $\{s',w_i\}$ is a separator of $H'$.
            \item For every $ij \in E(P)$, it holds that $w_i \in V(H')$ or $w_j \in V(H')$.
            \item If $\overline{L}$ is a triangle then $w_i,w_j \in V(H')$ for every $ij \in E(P)$.
            \item For every $ij \in E(P)$, the graph $A[V(H') \cap V(H_{ij})]$ is 2-connected.
        \end{enumerate}
    \end{claim}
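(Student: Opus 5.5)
I will prove the six items of the claim separately. Throughout, $H'$ is a copy of $H$ and therefore 2-connected, and $P$ is a minimal support for $H'$. The tools are Lemma \ref{claim:separators of H'} (separators of $P$ give separators of $H'$), Fact \ref{fact:minimality of P}, and Lemmas \ref{lemma:intersections with H_{ij} are k-connected} and \ref{lemma:H'-contains-intermediate-vertices-generalized}.

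\emph{Items 1 and 2.} Suppose $P$ is disconnected. Then $R=\emptyset$ is a separator of $P$, and Lemma \ref{claim:separators of H'} shows that $H'-\{s'\}$ is disconnected, with $H'$ meeting at least two of the sets $D_k$. Since $H'$ is 2-connected, $H'$ itself is connected, so $s'\in V(H')$ and $s'$ is a cut vertex of $H'$. This contradicts 2-connectivity, so $P$ is connected.

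For Item 2, $P$ is a tree with $e(P)\ge 2$, so some vertex $i$ has $d_P(i)\ge2$, and $R=\{i\}$ is a separator of $P$. By Lemma \ref{claim:separators of H'}, $H'-\{s',w_i\}$ is disconnected and $H'$ meets two different $D_k$'s. If $s'\notin V(H')$, then removing the single vertex $w_i$ (or nothing) would disconnect $H'$, which is impossible. Hence $s'\in V(H')$.

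\emph{Item 3.} Take $i$ with $d_P(i)\ge 2$. As above, $H'-\{s',w_i\}$ is disconnected. Since $H'$ is 2-connected, both $s'$ and $w_i$ must lie in $V(H')$, so $\{s',w_i\}$ is a separator of $H'$.

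\emph{Item 4.} Suppose $w_i,w_j\notin V(H')$ for some $ij\in E(P)$. By Fact \ref{fact:minimality of P}, $H'$ contains an internal vertex of $H_{ij}$, because the only other possible witness, the edge $w_iw_j$, would force $w_i,w_j\in V(H')$. By Fact \ref{fact:outside vertices}, $s'$ is then a cut vertex separating the internal vertices of $H_{ij}$ from the rest of $H'$. The rest of $H'$ is nonempty, since $V(H')\not\subseteq V(H_{ij})$ by Lemma \ref{lem:|P|>=2}. This contradicts 2-connectivity.

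\emph{Item 6.} Apply Lemma \ref{lemma:intersections with H_{ij} are k-connected} with $X=V(H')$ and $k=2$. This only requires $A[V(H')]$ to be 2-connected, which holds because it contains $H'$ as a spanning subgraph. It remains to rule out $|V(H')\cap V(H_{ij})|\le 2$. By Item 2, $s'$ is in this intersection, and by Item 4 so is $w_i$ or $w_j$. If the intersection has size at most 2, then it is exactly $\{s',w_i\}$, say, and $H'$ contains no internal vertex of $H_{ij}$ and not $w_j$. Then Fact \ref{fact:minimality of P}(1) fails: the only candidate edge is $w_iw_j$, which needs $w_j$. Hence the intersection has size at least 3.

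\emph{Item 5.} Here $\overline L$ is a triangle, so $\{x,y\}=\{a,b\}$ and $\{a,b\}$ separates $s$ from $V(H)\setminus L$. Suppose $w_j\notin V(H')$ with $w_i\in V(H')$. By Item 6, $V(H')\cap V(H_{ij})$ induces a 2-connected graph, and it contains $s'$ and $w_i$ but not $w_j$. Only $s'$ and $w_i$ connect it to the rest of $H'$, so $\{s',w_i\}$ separates this part from everything else.

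If $H'\subseteq A[V(H_{ij})\setminus\{w_j\}]$, this contradicts Lemma \ref{lem:|P|>=2}. Otherwise $\{s',w_i\}=f_{ij}(\{s,a\})$ is a separator of $H'$, and hence of $H$. This contradicts Lemma \ref{claim:sa-sb-not-2-cuts}, applied with $a$ or $b$ as appropriate. The same argument works if $w_i\notin V(H')$.

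This final use of Lemma \ref{claim:sa-sb-not-2-cuts} is the step I expect to be hardest. The point that needs care is showing that both sides of $H'-\{s',w_i\}$ are nonempty, which requires the internal vertices of $H_{ij}$ inside $H'$ together with a vertex outside $V(H_{ij})$.
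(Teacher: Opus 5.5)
\textbf{Gap in Item 5.} Items 1--4 and 6 are fine. Your Item 4 argument via Fact~\ref{fact:outside vertices} and Lemma~\ref{lem:|P|>=2} is valid. The paper gets Item 4 in one line from Item 3 instead: in a tree with at least two edges, every edge has an endpoint of degree at least 2.

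The gap is the step ``$\{s',w_i\}=f_{ij}(\{s,a\})$ is a separator of $H'$, and hence of $H$.'' The isomorphism between $H$ and $H'$ is some unknown $f\colon H\to H'$, not $f_{ij}$. A 2-cut $\{s',w_i\}$ of $H'$ only tells you that $f^{-1}(\{s',w_i\})$ is a 2-cut of $H$, and nothing forces this to be $\{s,a\}$. Indeed, Item 3 of the claim you are proving says $\{s',w_i\}$ \emph{is} a 2-cut of $H'$ whenever $d_P(i)\ge 2$, while Lemma~\ref{claim:sa-sb-not-2-cuts} says $\{s,a\}$ is not a 2-cut of $H$. If your inference were valid, Item 3 alone would already rule out $H'$. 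Case 2 of the proof of Theorem~\ref{theorem:2-connected main} exploits exactly this discrepancy by counting 2-cuts, and it would then be superfluous. Separators can only be transported from $H$ to $H'$, via Lemma~\ref{lemma:H'-contains-intermediate-vertices-generalized} and under its hypotheses, not back. So the 2-cut you construct gives no contradiction, and the nonemptiness issue you flag is beside the point.

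\textbf{How to fix it.} You already wrote down the needed fact. Since $\{a,b\}=\{x,y\}$ separates $s$ from $V(H)\setminus L$, and $sa,sb\in E(H)$, we get $N_H(s)=\{a,b\}$, hence $N_{H_{ij}}(s')=\{w_i,w_j\}$. By Items 2 and 6, $s'$ is a vertex of the 2-connected graph $A[V(H')\cap V(H_{ij})]$. So $s'$ has at least two neighbors there, and these can only be $w_i$ and $w_j$. This degree argument, not a separator argument, is how the paper proves Item 5.
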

    \begin{proof}
        We will apply Lemma \ref{claim:separators of H'} to the graph $K := H'$. 
        If $P$ is not connected then by Lemma \ref{claim:separators of H'} with $R := \emptyset$, the graph $H' - \{s'\}$ is disconnected, contradicting that $H'$ is 2-connected (since $H$ is 2-connected and $H'$ is a copy of $H$). This proves Item 1.

        For Items 2-3, consider any 
        $i \in V(P)$ with $d_P(i) \geq 2$. Then $R := \{i\}$ is a separator of $P$ (since $P$ is a tree). Now, again by Lemma \ref{claim:separators of H'}, we see that 
        $H' - \{s',w_i\}$ is disconnected. Since 
        $H'$ is 2-connected, it follows that $s',w_i \in V(H')$. 
        This proves Item 3. To infer Item 2, it suffices to note that $P$ has at least one vertex $i$ with $d_P(i) \geq 2$ (meaning that the above implication is not vacuous). Such an $i$ exists because $P$ is a tree with at least 2 edges. So we conclude that $s' \in V(H')$, as required.
        Note also that if $ij \in E(P)$ then 
        $d_P(i) \geq 2$ or $d_P(j) \geq 2$, so Item 4 follows from Item 3.

        Next, we prove Item 6. So let $ij \in E(P)$. First we claim that 
        $|V(H') \cap V(H_{ij})| \geq 3$. 
        Indeed, as we saw above, we have $d_P(i) \geq 2$ or $d_P(j) \geq 2$; without loss of generality, $d_P(j) \geq 2$. Then $s',w_j \in V(H')$ by Items 2,3. By Fact \ref{fact:minimality of P}, $H'$ either contains a vertex in $V(H_{ij}) \setminus \{s',w_i,w_j\}$ or an edge in
        $E(H_{ij}) \setminus \{ s'w_i,s'w_j \}$. In either case, this gives a vertex in $V(H') \cap V(H_{ij})$ not belonging to 
        $\{s',w_j\}$, proving our claim that $|V(H') \cap V(H_{ij})| \geq 3$. 
        We now apply Lemma \ref{lemma:intersections with H_{ij} are k-connected} with $X := V(H')$ and $k := 2$. Note that $A[V(H')]$ is 2-connected because $H'$ is 2-connected and $H'$ is a spanning subgraph of $A[V(H')]$.
        So by Lemma \ref{lemma:intersections with H_{ij} are k-connected},
        $A[V(H') \cap V(H_{ij})]$ is 2-connected.

        Finally, we prove Item 5. By Item 6, $A[V(H') \cap V(H_{ij})]$ is 2-connected, and hence has minimum degree at least 2. 
        By Item 2, $s' \in V(H')$.
        But note that if $\overline{L}$ is a triangle then the only neighbors of $s'$ in $H_{ij}$ are $w_i,w_j$. Therefore, 
        $w_i,w_j \in V(H')$.
    \end{proof}


    

    With the above properties in hand, we now arrive at the main part of the proof of the theorem. 
    Recall that $f_{ij} : H \rightarrow H_{ij}$ is the natural isomorphism from $H$ to $H_{ij}$.
    We define $F \subseteq H$ to be the subgraph of $H$ induced on the vertex set
    $$V(F) := L \cup 
    \Big(\bigcup_{ij \in E(P)}f_{ij}^{-1}\left( 
    V(H') \cap V(H_{ij}) \right)
    \Big).$$
    That is, $F$ is the induced subgraph of $H$ on the vertices in $L$ alongside the inverse image of each vertex used in the copy $H'$ of $H$. 
    
    In what follows, we will apply the induction hypothesis to the graph $F$. To this end, we first show that $F$ is 2-connected (which is required by Theorem \ref{theorem:2-connected main}).

    \begin{claim}\label{subgraph-F-is-2-connected-in-theorem}
        $F$ is 2-connected.
    \end{claim}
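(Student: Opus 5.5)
The plan is to write $V(F)$ as a union of pieces, each of which is 2-connected as a subgraph of $H$, and then glue the pieces together with Lemma \ref{claim:generalized-glue-on-edges} (with $k=2$).

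For each $ij \in E(P)$, put $X_{ij} := f_{ij}^{-1}(V(H') \cap V(H_{ij}))$. By Item 6 of Claim \ref{claim:simple properties of H',P}, $A[V(H') \cap V(H_{ij})]$ is 2-connected. This graph is an induced subgraph of $H_{ij}$, because $A[V(H_{ij})] = H_{ij}$. Applying the isomorphism $f_{ij}$, we get that $H[X_{ij}]$ is 2-connected.

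Next consider $L$. If $\overline{L}$ is 3-connected, I claim $H[L]$ is 2-connected. Deleting one vertex from $\overline{L}$ leaves a 2-connected graph, so deleting the edge $xy$ as well keeps it connected. Hence $H[L] = \overline{L} - xy$ is 2-connected. If $\overline{L}$ is a triangle, $H[L]$ is just the path $a\,s\,b$, which is not 2-connected. In that case I would use $H[L \cup X_{ij}]$ for a single edge $ij$ instead. By Item 5 of Claim \ref{claim:simple properties of H',P} and Item 2, we have $s', w_i, w_j \in V(H')$, so $X_{ij} \supseteq \{s,a,b\} = L$ and nothing new is added. In general, $H[L] \cup H[X_{ij}]$ is 2-connected by Lemma \ref{claim:generalized-glue-on-edges} once $|L \cap X_{ij}| \geq 2$.

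Now the gluing. Every $X_{ij}$ contains $s = f_{ij}^{-1}(s')$, using Item 2 of Claim \ref{claim:simple properties of H',P}. It also contains $f_{ij}^{-1}(w_i) = a$ or $f_{ij}^{-1}(w_j) = b$, by Item 4. Both $s$ and $a$, $b$ lie in $L$. Hence $|L \cap X_{ij}| \geq 2$ for every $ij \in E(P)$. I would then take $Y := L$ in the 3-connected case, or $Y := X_{i_0j_0}$ in the triangle case. Adding the sets $X_{ij}$ one at a time, the current union stays 2-connected by Lemma \ref{claim:generalized-glue-on-edges}. The required intersection of size at least 2 is always available, because each $X_{ij}$ meets $L$, which is contained in the union, in at least 2 vertices. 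The result is 2-connected with vertex set $V(F)$.

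Finally, the union of these subgraphs is a spanning subgraph of the induced graph $F = H[V(F)]$. Adding edges preserves 2-connectivity, so $F$ is 2-connected. The least routine step is checking that $H[L]$ is 2-connected when $\overline{L}$ is 3-connected, and handling the triangle case, where $L$ itself is not 2-connected. Both are covered by the arguments above.
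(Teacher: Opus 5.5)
Your proposal is correct and follows essentially the same argument as the paper. You glue the 2-connected pieces $H[X_{ij}]$, each meeting $L$ in at least two vertices by Items 2 and 4, onto $H[L]$ via Lemma \ref{claim:generalized-glue-on-edges}, and in the triangle case you use Items 2 and 5 to get $L \subseteq X_{ij}$, so $L$ can be dropped. The only differences are that you split on whether $\overline{L}$ is 3-connected rather than on whether $H[L]$ is 2-connected, and you justify that $H[L]$ is 2-connected when $\overline{L}$ is 3-connected, a step the paper leaves implicit; also, $H[L]$ is the path $a\,s\,b$ only when $ab \notin E(H)$, though your triangle-case argument does not depend on this.
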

    
    \begin{proof}
        By Item 6 of Claim \ref{claim:simple properties of H',P}, for every $ij \in E(P)$ it holds that 
        $H_{ij}[V(H') \cap V(H_{ij})] = 
        A[V(H') \cap V(H_{ij})]$ is 2-connected. Since $f_{ij}$ is an isomorphism, the graph 
        $F_{ij} := H[f_{ij}^{-1}\left( 
        V(H') \cap V(H_{ij}) \right)]$ is also 2-connected.
        Note also that for every $ij \in E(P)$ it holds that $|V(H') \cap V(H_{ij}) \cap \{s',w_i,w_j\}| \geq 2$ (by Items 2,4 of Claim \ref{claim:simple properties of H',P}), and therefore $|V(F_{ij}) \cap L| \geq 2$ (as $f_{ij}^{-1}(\{s',w_i,w_j\}) = \{s,a,b\} \subseteq L$).

        By definition, $V(F)$ is the union of $L$ and $V(F_{ij})$ for $ij \in E(P)$. Hence, the union 
        $F^* := 
        L \cup \bigcup_{ij \in E(P)}F_{ij}$ is a spanning subgraph of $F$. It therefore suffices to show that $F^*$ is 2-connected. 
        
        We now consider two cases. Suppose first that $L$ is 2-connected. Then
        $F^* = 
        L \cup \bigcup_{ij \in E(P)}F_{ij}$ is the union of 2-connected graphs. Also, $|L \cap V(F_{ij})| \geq 2$ for all 
        $ij \in E(P)$, as we saw above. Hence, by repeated applications of Lemma \ref{claim:generalized-glue-on-edges}, we conclude that $F^*$ is 2-connected, as required.

        Now suppose that $L$ is not 2-connected. This is only possible if $\overline{L}$ is a triangle and $ab \notin E(H)$ (indeed, if $\overline{L}$ were 3-connected then $L$ would be 2-connected). By Items 2,5 of Claim \ref{claim:simple properties of H',P}, we have 
        $L = \{s,a,b\} \subseteq V(F_{ij})$ for all $ij \in E(P)$. Hence, we can omit $L$ from the definition of $F^*$, i.e., 
        $F^* = \bigcup_{ij \in E(P)}F_{ij}$. It also follows that $|V(F_{ij}) \cap V(F_{k\ell})| \geq 3$ for all $ij,k\ell \in E(P)$. Again, by repeated applications of Lemma \ref{claim:generalized-glue-on-edges}, we conclude that $F^*$ is 2-connected. This completes the proof. 
    \end{proof}
    
    Proceeding with the inductive step, we now split into the cases $F \neq H$ and $F = H$. In the former case we will apply the induction hypothesis, and in the latter we will obtain a contradiction directly.

\paragraph{Case 1:} 
$F \neq H$; namely,
$V(F) \subsetneq V(H)$. 
In this case, we would like to apply the induction hypothesis to the graph $F$ (in place of $H$). Thus, we need to check that all conditions of Theorem \ref{theorem:2-connected main} are satisfied. 
First, $F$ is 2-connected by Claim \ref{subgraph-F-is-2-connected-in-theorem}. Second, we need that $L$ is a leaf of $F$. However, there is one case where this can fail: if $V(F) = L$ and $\overline{L}$ is 3-connected but $L = \overline{L} - \{xy\}$ is not 3-connected, then $L$ need not be a leaf of $F$. Hence, we handle this case separately (see Case 1.2 below).

\paragraph{Case 1.1:}
$L \subsetneq V(F)$ or $\overline{L}$ is a triangle. 
We claim that in this case, $L$ is a leaf of $F$. Indeed, if $L \subsetneq V(F)$ then this holds, because $V(F) \setminus L \neq \emptyset$ and 
$\{x,y\}$ separates 
$L \setminus \{x,y\}$ from $V(F) \setminus L$. Also, if $L = V(F)$ and $\overline{L}$ is a triangle then $F$ is also a triangle (as $F$ is 2-connected), in which case $L$ is also a leaf of $F$. Hence, all conditions in Theorem \ref{theorem:2-connected main} are satisfied. 
 
We now observe two crucial points.
First, note that $A_F := A(G,F,s,a,b)$ can be identified with the subgraph of $A = A(G,H,s,a,b)$ induced by 
$$
\{s'\} \cup \{w_1,\dots,w_n\} \cup 
\bigcup_{ij \in E(G)} f_{ij}(V(F)).
$$
Second, by the definition of the graph $F$, it holds that 
$V(H') \subseteq V(A_F)$ and hence $H' \subseteq A_F$. 
Now, let $f : H \rightarrow H'$ be an isomorphism from $H$ to its copy $H'$, and let $F' := f(F)$ be the subgraph of $H'$ playing the role of $F$. Then $F'$ is a copy of $F$ in $A_F$. 
Moreover, since $H' \subseteq A' = A - \{s'w_i : i \in C\}$, we get that
$F' \subseteq A'_F := A_F - \{s'w_i : i \in C\}$. But this contradicts the induction hypothesis.

\paragraph{Case 1.2:}
$V(F) = L$ and $\overline{L}$ is 3-connected. In this case we proceed similarly as in the proof of Theorem \ref{theorem:main-theorem-H-equals-L}. 
As above, let $f : H \rightarrow H'$ be an isomorphism from $H$ to its copy $H'$, and let $L' := f(L)$ be the copy of $L$ in $H'$. 
Note that $P$ contains a minimal support $P_L$ for the graph $L'$ (as in Definition \ref{def:minimal edge-set}). Indeed, simply take $P_L$ to be a minimal subgraph of $P$ with the property that 
$L' \subseteq A_{P_L}$; this is well-defined because 
$L' \subseteq A_P$. 
First we claim that $e(P_L) \geq 2$. So suppose by contradiction that $e(P_L) \leq 1$. 
Note that $L' \cong L$ is 2-connected, because $\overline{L}$ is 3-connected by assumption. In particular, $\delta(L') \geq 2$, and hence $P_L$ has no isolated vertices by Item 2 of Fact \ref{fact:minimality of P}.
Hence, $P_L$ is an edge, say $P_L = \{ij\}$ for some $ij \in E(G)$.
Then $L' \subseteq H_{ij}$. Next, we use the assumption that $V(F) = L$. By the definition of $F$, this means that $L'$ can only contain vertices from $f_{ij}(L)$; i.e., $L' \subseteq f_{ij}(L)$. Note that $L'$ and $A[f_{ij}(L)]$ are both isomorphic to $L$, and so 
$L' = A[f_{ij}(L)]$. However, one of the edges $s'w_i,s'w_j$ (both of which belong to 
$A[f_{ij}(L)]$) was deleted when turning $A$ into $A'$. Hence, this contradicts that $L' \subseteq H'$ is a subgraph of $A'$. 

Thus, $e(P_L) \geq 2$. Note that $P_L$ is connected. Indeed, if not, then by Lemma \ref{claim:separators of H'} with $K := L'$ and $R := \emptyset$, we get that $L' - \{s'\}$ is disconnected, which is impossible because $L'$ is 2-connected (as observed above).
Now, since $P_L$ is a tree with at least 2 edges, there is $i \in V(P_L)$ with $d_{P_L}(i) \geq 2$. Then $R := \{i\}$ is a separator of both $P_L$ and $P$ (as $P_L \subseteq P$ and $P$ is also a tree). 
Moreover, we can enumerate the connected components of $P_L - \{i\}$ as $C^L_1,\dots,C^L_m$ and 
the connected components of $P - \{i\}$ as $C^H_1,\dots,C^H_{m'}$, where $m' \geq m \geq 2$, such that $C^L_j \subseteq C^H_j$ for all $j \in [m]$ (here again we use that $P_L \subseteq P$ are trees).
We now apply Lemma \ref{claim:separators of H'} to both $H'$ and $L'$. 
Put $R' := \{s',w_i\}$. 
Let $D^L_1,\dots,D^L_m$ be the sets defined in \eqref{eq:separators of H'} when applying Lemma \ref{claim:separators of H'} to $K := L'$. Similarly, let $D^H_1,\dots,D^H_{m'}$ be the sets defined in \eqref{eq:separators of H'} when applying Lemma \ref{claim:separators of H'} to $K := H'$.  
Then $D^L_j \subseteq D^H_j$ for all $j \in [m]$.
By Lemma \ref{claim:separators of H'} (applied to $L'$), $L'$ intersects each of the sets $D^L_1,\dots,D^L_m$ and hence also each of the sets
$D^H_1,\dots,D^H_{m}$.
Moreover, by Lemma \ref{claim:separators of H'} applied to $H'$,
there is no path in $H'-R'$ between any two of the sets $D^H_1,\dots,D^H_{m}$.
Hence, there are $u,v \in L' \setminus R'$ such that every $u,v$-path in $H'$ passes through $R'$. 
However, by Lemma \ref{claim:three-VDPs}, there are three internally vertex-disjoint $u,v$-paths in $H'$ 
(here we use the assumption that $\overline{L}$ is 3-connected). As $|R'| \leq 2$, this is a contradiction, completing the proof of Case 1.2.

\paragraph{Case 2:}
$F = H$. We will obtain a contradiction by showing that $H'$ contains strictly more separators of size 2 (i.e. 2-cuts) than $H$, and thus, $H'$ cannot be isomorphic to $H$. To this end, we construct an injective but non-surjective mapping from the set of 2-cuts of $H$ to the set of 2-cuts \nolinebreak of \nolinebreak $H'$.

Consider any 2-cut $\{u, v\}$ in $H$. We have 
$\{u,v\} \neq \{s,a\}$, as $\{s, a\}$ is not a 2-cut in $H$ by Lemma \ref{claim:sa-sb-not-2-cuts}. Let 
$q \in \{s,a\} \setminus \{u,v\}$; also, if $s \notin \{u,v\}$ then pick $q = s$.  
Let $p \in V(H) \setminus \{u,v\}$ be some vertex such that $p,q$ are in different connected components of $H - \{u, v\}$. 

\begin{claim}\label{claim:f_ij(z) in H'}
    There exists $ij \in E(P)$ such that $f_{ij}(p) \in V(H')$. 
\end{claim}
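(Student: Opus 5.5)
We are in Case 2, so $F = H$. By the definition of $F$, every vertex of $V(H) \setminus L$ lies in $f_{ij}^{-1}(V(H') \cap V(H_{ij}))$ for some $ij \in E(P)$. So if $p \notin L$, there is $ij \in E(P)$ with $f_{ij}(p) \in V(H')$, and we are done. The remaining case is $p \in L$, where we must argue more carefully.

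Suppose first that $p \in \{s,a,b\}$. By Items 2 and 4 of Claim \ref{claim:simple properties of H',P}, for every $ij \in E(P)$ at least two of $s',w_i,w_j$ lie in $V(H')$. Since $P$ is a tree with at least two edges, we can choose the edge appropriately.
\begin{itemize}
\item If $p = s$, any edge works, since $f_{ij}(s) = s' \in V(H')$.
\item If $p = a$ or $p = b$, pick a vertex $t \in V(P)$ with $d_P(t) \geq 2$. Then $w_t \in V(H')$ by Item 3, and $t$ lies on at least two edges of $P$. Orient one such edge $ij$ so that $t$ plays the role of $i$ (the vertex mapped from $a$) or of $j$ (mapped from $b$), as needed.
\end{itemize}
The orientation $i<j$ is fixed, but since $d_P(t) \geq 2$ and $P$ is a tree, a suitable edge should exist. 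If not, we can use that $p$ and $q$ lie in different components of $H - \{u,v\}$ to reroute through another choice of edge.

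The main obstacle is $p \in L \setminus \{s,a,b\}$. Here I would use the separation structure. Since $q \in \{s,a\}$, $p \in L$, and $\{u,v\}$ separates $p$ from $q$, Lemma \ref{lemma:sv-not-2-cut-if-H-is-3-connected} applies when $\overline{L}$ is $3$-connected: it shows that $\{u,v\}$ cannot contain $s$ together with any other vertex as a separator. Hence $s \notin \{u,v\}$, so $q = s$.

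Next, suppose $\overline{L}$ is 3-connected. Then by Lemma \ref{claim:three-VDPs} the two vertices $p,s \in L$ are joined by three internally vertex-disjoint paths in $H$. This contradicts the assumption that $\{u,v\}$ separates $p$ from $q = s$. If instead $\overline{L}$ is a triangle, then $L = \{s,a,b\}$, and this case was already handled above.

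Therefore $p \in L$ forces $p \in \{s,a,b\}$, and combining the cases completes the proof of the claim. The step I expect to need the most care is the orientation bookkeeping for $p \in \{a,b\}$, namely ensuring that $w_t$ appears in the correct role $a$ or $b$ in some $H_{ij}$ with $ij \in E(P)$.
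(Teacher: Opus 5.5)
There is a genuine gap in your case $p \in \{a,b\}$. Your case $p \notin L$ matches the paper, and your three-paths argument for $p \in L$ when $\overline{L}$ is 3-connected is sound. The problem is that the roles in Construction \ref{construction:general-construction} cannot be oriented at will. For $ij \in E(G)$ with $i<j$, the labelling fixes $w_i = f_{ij}(a)$ and $w_j = f_{ij}(b)$.

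A vertex $t$ with $d_P(t)\ge 2$ can be smaller than all its $P$-neighbours. For example, take $P$ with edges $12,13$ and $t=1$. Then $w_t$ only ever plays the role of $a$. For $p=b$ you would need $w_2$ or $w_3$ in $V(H')$, and Items 2--4 of Claim \ref{claim:simple properties of H',P} do not give this. The fallback ``reroute using that $p,q$ lie in different components'' is not an argument.

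The case is also not vacuous. Suppose $\overline{L}$ is a triangle with $ab\notin E(H)$, and $H-s$ is the union of two 2-connected graphs, one containing $a$ and one containing $b$, sharing a single vertex $v\notin\{a,b\}$. Then $\{s,v\}$ is a 2-cut separating $a$ from $b$. This forces $q=a$, and $p=b$ is a legitimate choice.

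The missing ingredient is Item 5 of Claim \ref{claim:simple properties of H',P}: if $\overline{L}$ is a triangle, then $w_i,w_j\in V(H')$ for every $ij\in E(P)$. This holds because $A[V(H')\cap V(H_{ij})]$ is 2-connected by Item 6, $s'\in V(H')$, and the only neighbours of $s'$ in $H_{ij}$ are $w_i,w_j$. Hence $f_{ij}(L)=\{s',w_i,w_j\}\subseteq V(H')$, and any edge of $P$ works.

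You should also split on the type of $\overline{L}$ before splitting on whether $p\in\{s,a,b\}$. Lemma \ref{claim:three-VDPs} applies to any two distinct vertices of $L$. So when $\overline{L}$ is 3-connected, $p\in L$ is impossible outright, since $q\in\{s,a\}\subseteq L$; this includes $p\in\{a,b\}$. The detour through Lemma \ref{lemma:sv-not-2-cut-if-H-is-3-connected} to force $q=s$ is then unnecessary.

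With these two fixes your argument becomes exactly the paper's:
\begin{itemize}
\item $p\notin L$ is handled by the definition of $F$;
\item $p\in L$ with $\overline{L}$ 3-connected is contradictory;
\item $p\in L$ with $\overline{L}$ a triangle is handled by Item 5.
\end{itemize}
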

\begin{proof}
    First, suppose that $p \notin L$. Then 
    $p \in V(H) \setminus L = V(F) \setminus L$, so by definition of $V(F)$ there is some 
    $ij \in E(P)$ such that $p \in f_{ij}^{-1}(V(H') \cap V(H_{ij}))$; that is, $f_{ij}(p) \in V(H')$, as desired. Now, suppose that $p \in L$. If $\overline{L}$ is 3-connected then by Lemma \ref{claim:three-VDPs}, since $q \in \{s, a\} \subseteq L$, there are 3 internally vertex-disjoint $p,q$-paths in $H$. Hence, $p,q$ cannot be separated in $H$ by deleting two vertices, contradicting the choice of $p$. 
    And if $\overline{L}$ is a triangle, then by Items 2,5 of Claim \ref{claim:simple properties of H',P} we have $f_{ij}(p) \in f_{ij}(L) \subseteq V(H')$ for all 
    $ij \in E(P)$, as required.
\end{proof}

\begin{claim}\label{claim:f_ij(c) in H'}
    For every $ij \in E(P)$, it holds that 
    $f_{ij}(q) \in V(H')$.
\end{claim}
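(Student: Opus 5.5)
The plan is to split according to whether $q=s$ or $q=a$, using the way $q$ was chosen, namely $q=s$ whenever $s\notin\{u,v\}$.

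\emph{Case $q=s$.} Here $f_{ij}(q)=f_{ij}(s)=s'$ for every $ij\in E(P)$. By Item 2 of Claim \ref{claim:simple properties of H',P}, $s'\in V(H')$, so this case is immediate.

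\emph{Case $q=a$.} By the choice of $q$, this only happens when $s\in\{u,v\}$. Write $\{u,v\}=\{s,v\}$ for some vertex $v$. I will split further according to the type of the leaf $L$.
\begin{itemize}
\item If $\overline{L}$ is $3$-connected, then Lemma \ref{lemma:sv-not-2-cut-if-H-is-3-connected} applies, since $s\in L\setminus\{x,y\}$. It says that $\{s,v\}$ is not a separator of $H$ for any $v\in V(H)$. This contradicts the assumption that $\{u,v\}=\{s,v\}$ is a $2$-cut of $H$. So this subcase cannot occur.
\item If $\overline{L}$ is a triangle, then Items 2 and 5 of Claim \ref{claim:simple properties of H',P} give $s',w_i,w_j\in V(H')$ for every $ij\in E(P)$. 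That is, $f_{ij}(L)=f_{ij}(\{s,a,b\})\subseteq V(H')$. In particular $f_{ij}(a)=w_i\in V(H')$.
\end{itemize}

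Together these cases show $f_{ij}(q)\in V(H')$ for every $ij\in E(P)$. I do not expect a real obstacle here. The only point needing care is to notice that the case $q\neq s$ forces $s$ into the $2$-cut. That is exactly what Lemma \ref{lemma:sv-not-2-cut-if-H-is-3-connected} excludes in the $3$-connected case, which is why $q$ was chosen with priority on $s$.

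If I had to avoid that lemma, the fallback would be Items 4 and 6 of Claim \ref{claim:simple properties of H',P}. One would show that missing $w_i$ makes $\{s',w_j\}$ cut $V(H')\cap V(H_{ij})$ from the rest of $H'$, and then derive a contradiction. This would be much messier, so I will use the argument above.
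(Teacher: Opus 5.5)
Your proposal is correct and follows the paper's proof essentially step for step. The case $q=s$ is handled by Item 2 of Claim \ref{claim:simple properties of H',P}. When $q=a$, the choice of $q$ forces $s\in\{u,v\}$, so Lemma \ref{lemma:sv-not-2-cut-if-H-is-3-connected} rules out a 3-connected $\overline{L}$, and Item 5 gives $f_{ij}(a)=w_i\in V(H')$ when $\overline{L}$ is a triangle.
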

\begin{proof}
    Fix any $ij \in E(P)$.
    If $q = s$ then 
    $f_{ij}(s) = s' \in V(H')$ by 
    Item 2 of Claim \ref{claim:simple properties of H',P}. So suppose that $q \neq s$. By our choice of $q$, this means that $q = a$ and $s \in \{u,v\}$, say $u = s$. 
    Note that the case where $\overline{L}$ is 3-connected is impossible because by Lemma \ref{lemma:sv-not-2-cut-if-H-is-3-connected}, if $\overline{L}$ is 3-connected then
    $\{s,v\}$ is not a 2-cut of $H$ for any $v \in V(H)$.
    And if $\overline{L}$ is a triangle, then by Item 5 of Claim \ref{claim:simple properties of H',P}, it holds that 
    $f_{ij}(q) = f_{ij}(a) \in \{w_i,w_j\} \subseteq V(H')$, as required. 
\end{proof}

By Claims \ref{claim:f_ij(z) in H'} and \ref{claim:f_ij(c) in H'}, there exists 
$ij \in E(P)$ such that $f_{ij}(p), f_{ij}(q) \in V(H')$. Recall that $S := \{u,v\}$ separates $p,q$ in $H$.
Hence, by Lemma \ref{lemma:H'-contains-intermediate-vertices-generalized} (with $k=2$), we have that 
$f_{ij}(u), f_{ij}(v) \in V(H')$ and
$\{f_{ij}(u), f_{ij}(v)\}$ is a separator of $H'$.
This proves that for every 2-cut $\{u, v\}$ of $H$, there exists some $ij \in E(P)$ such that 
$\{f_{ij}(u), f_{ij}(v)\}$ is a 2-cut of $H'$.
This defines a mapping from the set of 2-cuts of $H$ to the set of 2-cuts of $H'$.
Furthermore, this map is injective. Indeed, if two distinct 2-cuts
$\{u,v\},\{u',v'\}$ of $H$ are mapped to the same pair, then there must be distinct edges 
$ij,k\ell \in E(G)$ such that $\{f_{ij}(u),f_{ij}(v)\} = \{f_{k\ell}(u'),f_{k\ell}(v')\}$ (indeed, if $ij = k\ell$ then $\{u,v\},\{u',v'\}$ are mapped to different pairs because $f_{ij}$ is a bijection). However, $V(H_{ij}) \cap V(H_{k\ell}) = \{s'\} \cup (\{w_i,w_j\} \cap \{w_k,w_{\ell}\})$. This means that $\{f_{ij}(u),f_{ij}(v)\} = \{f_{k\ell}(u'),f_{k\ell}(v')\} = \{s',w_t\}$ for some $t \in V(G)$, and hence $\{u,v\},\{u',v'\} \in \{\{s,a\}, \{s,b\}\}$. 
But $\{s,a\},\{s,b\}$ are not 2-cuts of $H$ by Lemma \ref{claim:sa-sb-not-2-cuts}. This proves injectivity. 

Now, we claim that there exists an additional 2-cut in $H'$ which is not obtained in the above way.
To see this, let $i \in V(P)$ be such that $d_P(i) \geq 2$; such an $i$ exists because $P$ is a tree with at least 2 edges. By Item 3 of Claim \ref{claim:simple properties of H',P}, $\{s',w_i\}$ is a 2-cut of $H'$. But observe that this 2-cut is not the image of any 2-cut of $H$, i.e., there is no 2-cut $\{u,v\}$ of $H$ such that $\{s',w_i\} = \{f_{ij}(u),f_{ij}(v)\}$ for some $ij \in E(P)$. Indeed, for every $ij \in E(P)$,
$\{ f^{-1}_{ij}(s'), f^{-1}_{ij}(w_i) \}$ equals 
$\{s,a\}$ or $\{s,b\}$, but these are not 2-cuts of $H$, by Lemma \ref{claim:sa-sb-not-2-cuts}.
Finally, we see that $H'$ has more 2-cuts than $H$, giving the desired contradiction. 
This completes the proof of Case 2, and hence the proof of the theorem. 
\end{proof}

\section{Connected $H$ with $\delta(H) \geq 2$}\label{sec:connected, delta >=2}
The goal of this section is to extend Theorem \ref{theorem:2-connected H} to all connected graphs $H$ with minimum degree at least 2. As we shall see, Construction \ref{construction:general-construction} is not suitable to handle all such graphs, and so in some cases we will need a variant of this construction (see Construction \ref{construction:second construction} below). 

Our approach for treating connected graphs $H$ with $\delta(H) \geq 2$ is to consider the decomposition of $H$ into biconnected components. Recall that this decomposition can be described using a tree, whose nodes correspond to the biconnected components and to cut vertices, see \cite[Section 3.1]{Diestel2017}. Alternatively, this is a tree-decomposition with adhesion 1, where each bag induces a 2-connected graph, an edge, or a single vertex. The following definition is analogous to Definition \ref{def:leaf}, only for adhesion 1. 

\begin{definition}[leaf biconnected component]
    Let $H$ be a connected graph. 
    A {\em leaf biconnected component} of $H$ is a subset $M \subseteq V(H)$ such that $H[M]$ is 2-connected and either 
    $M = V(H)$, or there is a vertex 
    $z \in M$ which separates $M$ from $V(H) \setminus M$.\footnote{With a slight abuse of notation, we will denote by $M$ both the vertex-set and the induced subgraph $H[M]$.} 
\end{definition}



\begin{fact}\label{fact:leaf biconnected component}
    Every connected graph $H$ with $\delta(H) \geq 2$ has a leaf biconnected component. 
\end{fact}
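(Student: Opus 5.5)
Every connected graph $H$ with $\delta(H)\ge 2$ contains a cycle, so it has at least one 2-connected block. The plan is to use the block--cut-vertex tree of $H$ and take a block that is a leaf of this tree. The one subtlety is that leaves of the block tree could a priori be bridges (single edges) rather than 2-connected blocks. The minimum degree condition rules this out.

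\textbf{Case $H$ 2-connected.} Then $M:=V(H)$ is a leaf biconnected component by definition.

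\textbf{Case $H$ not 2-connected.} Since $H$ is connected, it has a cut vertex. I would consider the block--cut tree $\mathcal{T}$: its nodes are the blocks of $H$ (maximal 2-connected subgraphs or bridges) and the cut vertices, with a block adjacent to each cut vertex it contains. Because $H$ has a cut vertex, $\mathcal{T}$ has at least two leaves, and every leaf of $\mathcal{T}$ is a block $B$ containing exactly one cut vertex $z$.

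First, $B$ is not a bridge. If $B=\{z,u\}$ were a bridge, then $u$ would not be a cut vertex, so all edges at $u$ lie in $B$. That gives $d_H(u)=1$, contradicting $\delta(H)\ge 2$. Hence $H[B]$ is 2-connected.

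Next, $z$ separates $B$ from $V(H)\setminus B$. Any vertex outside $B$ reaches $B$ only through cut vertices of $H$ lying in $B$, and $z$ is the only one. Equivalently, a path leaving $B\setminus\{z\}$ uses an edge not in $B$, and such an edge must be incident to a cut vertex of $B$. So every path from $B\setminus\{z\}$ to $V(H)\setminus B$ passes through $z$, and $M:=B$ is a leaf biconnected component.

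The only points needing care are the standard facts about the block--cut tree, namely that it is a tree whose leaves are blocks when a cut vertex exists, and the exclusion of pendant bridges. Both are routine; the statement is essentially an unpacking of the block structure as in \cite[Section 3.1]{Diestel2017}.
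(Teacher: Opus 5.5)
Your proof is correct and takes the same approach as the paper. Both take a leaf of the block--cut tree (or $M=V(H)$ if $H$ is 2-connected) and use $\delta(H)\ge 2$ to rule out the leaf block being a bridge; you additionally spell out why its unique cut vertex $z$ separates it from the rest of $H$, which the paper leaves implicit.
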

\begin{proof}
    Consider the decomposition of $H$ into biconnected components, and take $M$ to be the vertex-set of a biconnected component corresponding to a leaf in the corresponding tree (unless $H$ is 2-connected, in which case take $M = V(H)$). If $M$ is not 2-connected, meaning that $M$ is an edge or a single vertex, then $\delta(H) \leq 1$, a contradiction.
\end{proof}


\begin{definition}[good choice]
For a 2-connected graph $M$, we say that a triple $s,a,b$ is a {\em good choice} for $M$ if it satisfies the conclusion of Theorem \ref{theorem:2-connected H}.
\end{definition}

\noindent
When applying Construction \ref{construction:general-construction} to a connected graph $H$ with $\delta(H) \geq 2$, we will proceed as follows: Pick (arbitrarily) a leaf biconnected component $M$ of $H$.
In the case that $M \neq H$, we will denote by $z$ the cut vertex which separates $M$ from $V(H) \setminus M$. The following theorem describes the conditions under which we can apply Construction \ref{construction:general-construction} in this setting. The crucial condition is that $z \neq a,b$.

\begin{theorem}\label{theorem:connected H, delta >= 2, type 1}
    Let $H$ be a connected graph with $\delta(H) \geq 2$. Let $M$ be a leaf biconnected component of $H$, and, if $M \neq H$, let $z$ be the cut vertex separating $M$ from $V(H) \setminus M$. 
    Let $s,a,b \in M$ be a good choice for $M$ with $s \neq z$ (if $z$ is defined); such a choice exists by Theorem \ref{theorem:2-connected H}. 
    Suppose that $z \neq a,b$ (if $z$ is defined). 
    Let $G$ be a forest with $V(G) = [n]$, and let 
    $A = A(G, H, s, a, b)$ as in Construction \ref{construction:general-construction}. Let $C$ be a vertex-cover of $G$. Then 
    $A' := A - \{s'w_i : i \in C\}$ is $H$-free.
\end{theorem}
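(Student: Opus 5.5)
The plan is to reduce to the $2$-connected case (Theorem~\ref{theorem:2-connected H}), using the block structure of $A'$ relative to the cut vertex $z$. If $M = H$, then $H$ is $2$-connected and the statement is exactly Theorem~\ref{theorem:2-connected H}, so I assume $M \neq H$ and that $z$ is defined.

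\emph{Step 1: describe the blocks of $A$.} Since $s,a,b \in M \setminus \{z\}$, Fact~\ref{fact:outside vertices} shows that in each basic copy $H_{ij}$ the part $f_{ij}(V(H)\setminus M)$ attaches to the rest of $A$ only through the single vertex $f_{ij}(z)$, which is an internal vertex of $H_{ij}$. Accordingly I split $A$ into two kinds of pieces:
\begin{itemize}
\item the ``core'' $A_M := A(G,M,s,a,b)$, identified with the subgraph induced by $\{s',w_1,\dots,w_n\}\cup\bigcup_{ij} f_{ij}(M)$, as in Case~1.1 of the proof of Theorem~\ref{theorem:2-connected main};
\item for each $ij \in E(G)$, a ``pendant'' graph $Q_{ij} := H_{ij}[f_{ij}((V(H)\setminus M)\cup\{z\})]$. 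It meets the core only in $f_{ij}(z)$ and meets no other $Q_{k\ell}$.
\end{itemize}
Consequently every $2$-connected subgraph of $A$ (and of $A'$) lies either inside the core or inside a single $Q_{ij}$. In $A'$ the core becomes $A'_M := A_M - \{s'w_i : i \in C\}$. This graph is $M$-free by Theorem~\ref{theorem:2-connected H}, because $s,a,b$ is a good choice for $M$ and $G$ is a forest.

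\emph{Step 2: compare the blocks of $H$ and $H'$.} Suppose $H'$ is a copy of $H$ in $A'$. Let $\mathcal{B}$ be the multiset of $2$-connected blocks of $H$ that contain a subgraph isomorphic to $M$, and let $t := |\mathcal{B}| \ge 1$; note $M \in \mathcal{B}$. Each block of $H'$ that contains a copy of $M$ is $2$-connected, so by Step~1 it lies in the core or in some $Q_{ij}$. It cannot lie in the core, since $A'_M$ is $M$-free. Hence all $t$ such blocks of $H'$ lie in pendant graphs $Q_{ij}$.

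Each $Q_{ij}$ is isomorphic to $H - (M\setminus\{z\})$, so it contains exactly $t-1$ blocks with an $M$-copy, namely the images of $\mathcal{B}\setminus\{M\}$. More precisely, I would attach to $H$ a weighted invariant: the multiset of blocks ordered by size, or equivalently the block--cut tree together with the isomorphism types of its blocks. I would then show that $H'$, being spread over the core and several $Q_{ij}$ glued at the vertices $f_{ij}(z)$, cannot reproduce this invariant.

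The cleanest version I would try first is an extremal-block argument. Let $B^\ast$ be a block of $H$ of maximum size containing a copy of $M$, and count how many blocks of each isomorphism type of size $|B^\ast|$ appear in $H$ versus $H'$. If $H'$ meets only one $Q_{ij}$ together with part of the core, then its maximum blocks all lie in $Q_{ij}$. There are strictly fewer of them than in $H$ whenever $M$ itself is maximal, and otherwise the vertex count $v(Q_{ij}) < v(H)$ together with the core's contribution gives the contradiction. If $H'$ uses several pendant graphs $Q_{ij}$, these are joined only through the core, which is $M$-free.

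\emph{Main obstacle.} The hardest step is making this counting argument rigorous when $H'$ mixes core blocks with pieces of several $Q_{ij}$'s. A single pendant graph is missing a copy of $M$, but a union of pendant graphs may have too many big blocks. The way to prevent $H'$ from compensating appears to be the condition $z \neq a,b$: it forces every block of $H'$ lying in the core to be a proper $M$-free $2$-connected subgraph, attached to the pendant graphs only at the vertices $f_{ij}(z)$, which are not $w$-vertices. I would also look for an argument in the style of the Case~2 count of $2$-cuts in Theorem~\ref{theorem:2-connected main}, comparing numbers of cut vertices or leaf blocks of $H$ and $H'$, as a fallback if the block-type count does not close.
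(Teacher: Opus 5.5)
There is a genuine gap, and it is the one you flag yourself. Nothing in the proposal rules out a copy $H'$ that meets several pendants $Q_{ij}$, i.e.\ contains vertices of $f_{ij}(V(H)\setminus M)$ for two or more edges $ij$. The remark that the pendants ``are joined only through the $M$-free core'' does not settle this. As soon as $N=H-(M\setminus\{z\})$ contains a copy of $M$, two pendants together can carry at least as many copies of $M$ as $H$ does.

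Your chosen invariant also fails, even for a single pendant. The number of blocks containing a copy of $M$ is not monotone under passing to subgraphs. For example, a block of $N$ made of two $K_4$'s joined by two disjoint edges splits, after deleting one of those edges, into two blocks that each contain $K_4$. So $H'\cap Q_{ij}$ could have $t$ such blocks. The quantity that is monotone is the number of copies of $M$. All of them lie in pendants, and one pendant carries at most $|\mathcal{M}(N)|\le|\mathcal{M}(H)|-1$. Your Step~1 is correct and matches what the paper uses.

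The idea you are missing is an induction on $|V(H)|$ that forces $H'$ to use every vertex of $V(H)\setminus M$ before any counting is done. Set $F:=H[M\cup\bigcup_{ij}f_{ij}^{-1}(V(H')\cap V(H_{ij}))]$. One checks that $F$ is connected with $\delta(F)\ge 2$, and that $F$ satisfies the hypotheses with the same $M,z,s,a,b$.

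\emph{Case $F\neq H$.} Then $V(H')$ lies inside the copy of $A(G,F,s,a,b)$ sitting in $A$. Hence $A(G,F,s,a,b)-\{s'w_i:i\in C\}$ contains $f(F)\cong F$, which contradicts the induction hypothesis.

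\emph{Case $F=H$.} Now your fallback works. Let $W$ be the set of cut vertices of $H$, and fix $w\in W\setminus\{z\}$.
\begin{itemize}
\item Pick $p$ beyond $w$ and an edge $ij$ with $f_{ij}(p)\in V(H')$.
\item Pick $q\in\{s,a,b\}$ with $f_{ij}(q)\in V(H')$; this exists because $H'$ is connected and $V(H')\not\subseteq V(H_{ij})$.
\item Lemma~\ref{lemma:H'-contains-intermediate-vertices-generalized} with $k=1$ then makes $f_{ij}(w)$ a cut vertex of $H'$.
\end{itemize}
In the same way, each pendant met by $H'$ contributes the cut vertex $f_{ij}(z)$. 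Since $z\notin\{s,a,b\}$, all of these are distinct internal vertices of basic copies. If $I$ is the set of pendants met, $H'$ therefore has at least $|W|-1+|I|$ cut vertices, which forces $|I|\le 1$. The count of copies of $M$ then finishes the proof.

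Without first reducing to $F=H$, the cut-vertex count does not bound $|I|$. A partially used pendant need not contain images of the cut vertices of $N$.
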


Before proving Theorem \ref{theorem:connected H, delta >= 2, type 1}, we describe the construction which we use for the remaining graphs $H$. The crucial assumption in Theorem \ref{theorem:connected H, delta >= 2, type 1} is that $z \neq a,b$. There are, however, graphs where such a choice of $s,a,b$ is impossible, and furthermore, for which the conclusion of Theorem \ref{theorem:connected H, delta >= 2, type 1} does \nolinebreak not \nolinebreak hold.

\begin{example}
    Let $H$ be the graph consisting of two triangles sharing a vertex; namely, $V(H) = \{v_1,v_2,v_3,v_4,v_5\}$ and 
    $T_1 := \{v_1,v_2,v_3\}, 
    T_2 := \{v_3,v_4,v_5\}$ are triangles (so $T_1,T_2$ are the biconnected components of $H$). 
    Suppose we used Construction \ref{construction:general-construction} for this graph $H$, with $s,a,b$ being the vertices of one of the two triangles; say 
    $\{s,a,b\} = V(T_1)$. For every edge $ij \in E(G)$, the copy $H_{ij}$ in the construction includes a copy of the triangle $T_2$, and this triangle is ``hanging" on one of the vertices playing the roles of $s,a,b$, i.e., on $s'$, $w_i$ or $w_j$. Since vertices 
    $i \in V(G)$ may be incident to many edges, we obtain many such hanging triangles on some vertex in the set $\{s'\} \cup \{w_1,\dots,w_n\}$. In particular, any two of these triangles (hanging on the same vertex) form a copy of $H$. However, these copies contain no edges of the form $s'w_i$, $i \in [n]$, so it is no longer true that one can make the construction $H$-free by deleting only edges of this type. 
\end{example}

The solution to the above issue is to ``hang" only one copy of $H - (M \setminus \{z\})$ on each vertex $w_i$. (In the above example, 
$H - (M \setminus \{z\})$ is the triangle $T_2$.)
This leads to the following construction. 
\begin{construction}[$B(G,H,M,s,a,b)$]\label{construction:second construction}
Let $H$ be a connected graph with $\delta(H) \geq 2$. Let $M$ be a leaf biconnected component of $H$, and, if $M \neq H$, let $z$ be the cut vertex separating $M$ from $V(H) \setminus M$. Let $s,a,b \in M$ be distinct vertices with $sa,sb \in E(H)$. Let $G$ be a graph with $V(G) = [n]$. Define a graph $B = B(G,H,M,s,a,b)$ as follows:
\begin{itemize}
    \item Start with the construction 
    $A = A(G,M,s,a,b)$. We will denote by $M_{ij}$ the copy of $M$ which corresponds to an edge $ij \in E(G)$.\footnote{In Construction \ref{construction:general-construction}, these were denoted by $H_{ij}$.} 
    \item Assuming that $z$ is defined, for each $i \in [n]$, add a copy $H_i$ of 
    $N := H - (M \setminus \{z\})$ in which $w_i$ plays the role of $z$ (where $w_i$ is the vertex of $A$, as in Construction \ref{construction:general-construction}) and all other vertices are new (i.e., are disjoint from the vertices of $A$ and from all other copies). Denote by 
    $g_i : N \rightarrow H_i$ the corresponding isomorphism, so that $g_i(z) = w_i$. 
\end{itemize}
\end{construction}
\noindent
We will refer to the copies $H_i$ also in the case where $M = H$; in this case $H_i = \{w_i\}$. See Figure \ref{fig:B construction} for an illustration of Construction \ref{construction:second construction}.

\begin{figure}[h]
    \centering
    \begin{tikzpicture}[
  vertex/.style={circle,draw,thick,minimum size=7.5mm,inner sep=0pt,fill=white},
  smallvertex/.style={circle,draw,thick,minimum size=6.5mm,inner sep=0pt,fill=white},
  edge/.style={draw,thick},
  labelstyle/.style={font=\large},
  sidetitle/.style={font=\large\bfseries}
]

\node[vertex,label={[labelstyle]above:$s'$}] (sp) at (0,4.5) {};

\node[vertex,label={[labelstyle]above:$w_1$}] (w1) at (-4.5,2.2) {};
\node[vertex,label={[labelstyle]above:$w_2$}] (w2) at (-1.5,2.2) {};
\node[vertex,label={[labelstyle]above:$w_3$}] (w3) at ( 1.5,2.2) {};
\node[vertex,label={[labelstyle]above:$w_4$}] (w4) at ( 4.5,2.2) {};

\foreach \w in {w1,w2,w3,w4}{\draw[edge] (sp)--(\w);}
\draw[edge] (w1)--(w2)--(w3)--(w4);

\foreach \i/\x in {1/-4.5,2/-1.5,3/1.5,4/4.5}{
  \node[smallvertex] (l\i) at ($(w\i)+(-0.72,-1.05)$) {};
  \node[smallvertex] (d\i) at ($(w\i)+(0,-2.10)$) {};
  \node[smallvertex] (r\i) at ($(w\i)+(0.72,-1.05)$) {};
  \draw[edge] (w\i)--(l\i)--(d\i)--(r\i)--(w\i);
}

\node[font=\small] at (-2.4,2.75) {$M_{12}$};
\node[font=\small] at ( 0.0,2.75) {$M_{23}$};
\node[font=\small] at (2.4,2.75) {$M_{34}$};

\draw[thick] (6.15,-0.45)--(6.15,5.25);

\begin{scope}[scale = 0.75, xshift=3cm]
\node[sidetitle] at (9.20,1) {$H$};

\node[smallvertex,label=above:$s$] (Hs) at (7.35,3.55) {};
\node[smallvertex,label=above:$a$] (Ha) at (8.45,4.15) {};
\node[smallvertex,label={[xshift=-1mm]below:$b=z$}] (Hz) at (8.45,2.95) {};
\draw[edge] (Hs)--(Ha)--(Hz)--(Hs);

\node[smallvertex] (Hu) at (9.75,4.15) {};
\node[smallvertex] (Hv) at (10.85,4.15) {};
\node[smallvertex] (Ht) at (10.85,2.95) {};
\draw[edge] (Hz)--(Hu)--(Hv)--(Ht)--(Hz);
\end{scope}
\end{tikzpicture}
    \caption{The construction $B = B(G,H,M,s,a,b)$ with $G = \{12,23,34\}$ and the graph $H$ depicted on the right.}
    \label{fig:B construction}
\end{figure}

\begin{fact}\label{fact:copies-of-H-second-construction}
    Let $ij \in E(G)$, $i < j$. Let $M,z$ be as above, and suppose that if $M \neq H$ (so that $z$ is defined) then
    $z \in \{a, b\}$. Then, $B[M_{ij} \cup H_i] \cong H$ or $B[M_{ij} \cup H_j] \cong H$. 
\end{fact}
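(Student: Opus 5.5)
We handle the case $M = H$ first, and then the case $M \neq H$ with $z \in \{a,b\}$.

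\emph{Case $M = H$.} Here each $H_i = \{w_i\}$, so $M_{ij} \cup H_i = V(M_{ij})$. We will show that $B[V(M_{ij})] = M_{ij} \cong H$. By Construction \ref{construction:general-construction}, the only edges of $B$ inside $V(M_{ij})$ other than those of $M_{ij}$ could be edges $s'w_k$ or edges from other copies $M_{k\ell}$. Now $V(M_{ij}) \cap \{w_1,\dots,w_n\} = \{w_i,w_j\}$, and both $s'w_i, s'w_j$ already lie in $M_{ij}$ (they are the images of $sa, sb$). Suppose an edge of another copy $M_{k\ell}$ has both endpoints in $V(M_{ij})$. As in Fact \ref{lemma:edges-belong-uniquely}, both endpoints must lie in $\{s',w_i,w_j\} \cap \{s',w_k,w_\ell\}$. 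Edges $s'w$ are already present, so the only remaining possibility is the edge $w_iw_j$ with $\{k,\ell\}=\{i,j\}$. That would force $k\ell = ij$, a contradiction. So the claim holds, and in fact both alternatives hold.

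\emph{Case $M \neq H$, $z \in \{a,b\}$.} By symmetry, say $z = a$ (if $z = b$, swap the roles of $i$ and $j$). Then $f_{ij}(z) = w_i$, and $H_i$ is a copy of $N = H - (M \setminus \{z\})$ glued at $w_i = g_i(z)$. Define $\phi : V(H) \to V(M_{ij}) \cup V(H_i)$ by $\phi = f_{ij}$ on $M$ and $\phi = g_i$ on $V(N)$. The two maps agree on $M \cap V(N) = \{z\}$, since both send $z$ to $w_i$. The rest of $H_i$ is new, so $V(H_i) \cap V(M_{ij}) = \{w_i\}$, and hence $\phi$ is a bijection.

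It remains to show that $\phi$ preserves edges and non-edges; this is the main step. Every edge of $H$ lies in $M$ or in $N$, because $z$ separates $M \setminus \{z\}$ from $V(H) \setminus M$; such edges map to edges of $M_{ij}$ or $H_i$. Conversely, we must check that $B$ has no extra edges in $V(M_{ij}) \cup V(H_i)$:

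\begin{enumerate}
\item No edge joins $V(H_i) \setminus \{w_i\}$ to anything outside $H_i$, because these vertices are new and attached only within $H_i$.
\item Inside $V(M_{ij})$, the induced graph is exactly $M_{ij}$ by the argument of the first case.
\end{enumerate}

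Consequently the induced graph is exactly the union $M_{ij} \cup H_i$, and this union is isomorphic to $H$ via $\phi$. So $B[M_{ij} \cup H_i] \cong H$. In the case $z = b$, we get $B[M_{ij} \cup H_j] \cong H$ instead.
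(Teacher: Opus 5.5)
Your proposal is correct and follows the paper's argument: glue $M_{ij}$ and $H_i$ along $w_i=f_{ij}(z)=g_i(z)$ (or glue $H_j$ at $w_j$ when $z=b$), and handle the case $M=H$ separately as the trivial one. Your extra check that $B$ has no additional edges on $V(M_{ij})\cup V(H_i)$ is left implicit in the paper, and it justifies the induced form of the statement; it should also note that every edge of $H_k$ with $k\neq i$ has an endpoint outside this vertex set.
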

\begin{proof}
    If $M = H$ then clearly $M_{ij}$ is a copy of $H$ (and 
    $H_i = \{w_i\},H_j = \{w_j\} 
    \subseteq M_{ij}$). 
    Assume now that $M \neq H$.
    Suppose $z = a$. 
    Then $f_{ij}(z) = f_{ij}(a) = w_i$. 
    Now, $M_{ij}$ is a copy of $M$ where $w_i$ plays the role of $a = z$, and $H_i$ is a copy of $H - (M \setminus \{z\})$ where $w_i$ plays the role of $z$. Hence, $M_{ij} \cup H_i$ forms a copy of $H$. The case of $z = b$ is identical (taking $M_{ij} \cup H_j$ in place of $M_{ij} \cup H_i$). 
\end{proof}

\noindent
We now prove versions of Facts 
\ref{lemma:edges-belong-uniquely} and \ref{fact:subgraph} and
Lemmas \ref{lemma:easy-inequality} and \ref{thm:g-locally-a-forest} for Construction \ref{construction:second construction}. 

\begin{fact}\label{lemma:edges-belong-uniquely, type 2}
    Let $B = B(G,H,M,s,a,b)$, and let $e := vw \in E(B) \setminus \{s'w_i : i \in [n]\}$. Then, either there exists a unique edge $ij \in E(G)$ such that $e \in E(M_{ij})$, or there exists a unique $i \in V(G)$ such that $e \in E(H_i)$.
\end{fact}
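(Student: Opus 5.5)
We follow the proof of Fact \ref{lemma:edges-belong-uniquely}. By Construction \ref{construction:second construction}, the edge set of $B$ is
$$E(B) = \{s'w_i : i \in [n]\} \cup \bigcup_{ij \in E(G)} E(M_{ij}) \cup \bigcup_{i \in [n]} E(H_i),$$
where the last union is empty if $M = H$, since then $H_i = \{w_i\}$. Hence every edge $e = vw \notin \{s'w_i : i \in [n]\}$ lies in some $M_{ij}$ or in some $H_i$. This gives existence. It remains to show that $e$ lies in only one of these graphs.

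The key step is to record the pairwise vertex intersections of the pieces. Every vertex of an $M_{ij}$ outside $\{s',w_i,w_j\}$ is new, and every vertex of an $H_i$ other than $w_i$ is new, in each case distinct from all vertices of $A$ and of all other copies. Therefore:
\begin{itemize}
\item $V(M_{ij}) \cap V(M_{k\ell}) = \{s'\} \cup (\{w_i,w_j\} \cap \{w_k,w_\ell\})$, as in the proof of Fact \ref{lemma:edges-belong-uniquely};
\item $V(H_i) \cap V(H_k) = \emptyset$ for $i \neq k$;
\item $V(M_{ij}) \cap V(H_k) \subseteq \{w_k\}$, because $s' \notin V(H_k)$ and the only vertex of $H_k$ that is not new is $w_k$.
\end{itemize}

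With these intersections, each case of non-uniqueness is ruled out.

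\emph{Two copies $M_{ij} \neq M_{k\ell}$.} Then $v,w \in \{s'\} \cup (\{w_i,w_j\}\cap\{w_k,w_\ell\})$. Since $e \ne s'w_t$ for every $t$, neither endpoint is $s'$. So $\{v,w\} = \{w_i,w_j\} = \{w_k,w_\ell\}$, which forces $ij = k\ell$, a contradiction.

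\emph{Two copies $H_i \neq H_k$.} They share no vertices, so they cannot share an edge.

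\emph{A copy $M_{ij}$ and a copy $H_k$.} They share at most one vertex, so they cannot share an edge, which needs two endpoints. This also shows the two alternatives in the statement are mutually exclusive: no edge lies both in some $M_{ij}$ and in some $H_k$.

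Nothing here is a real obstacle. The only point needing care is the intersection $V(M_{ij}) \cap V(H_k)$. It rests on two facts from the construction: the copies $H_k$ are attached only at $w_k$, and they do not contain $s'$.
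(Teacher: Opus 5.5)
Your proof is correct and follows the same route as the paper. Edges of $A(G,M,s,a,b)$ are handled by the argument of Fact~\ref{lemma:edges-belong-uniquely}, which you re-derive inline rather than cite, and edges of the hanging copies are handled by the vertex-disjointness of $H_1,\dots,H_n$. Your extra observation that $|V(M_{ij}) \cap V(H_k)| \leq 1$, which makes the two alternatives mutually exclusive, is a point the paper leaves implicit.
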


\begin{proof}
    By definition, $B$ consists of the construction $A = A(G,M,s,a,b)$ and the ``hanging" copies $H_i$ of 
    $H - (M \setminus \{z\})$ (in the case $M \neq H$). 
    If $e \in E(A)$ then by Fact \ref{lemma:edges-belong-uniquely}, there exists a unique edge $ij \in E(G)$ such that $e \in E(M_{ij})$. 
    Otherwise, $e \in E(H_i)$ for some $i \in [n] = V(G)$. 
    Clearly, the graphs $H_1,\dots,H_n$ are vertex-disjoint, so $i$ is unique as desired. 
\end{proof}

\begin{fact}\label{fact:subgraph, type 2}
    Let $B = B(G,H,M,s,a,b)$, and let $P$ be a subgraph of $G$. Then $B_P := B(P,H,M,s,a,b)$ can be identified with the subgraph of $B$ having vertex-set
    \begin{equation}\label{eq:subgraph of B}
    V(B_P) := \{s'\} \cup 
    \Big( \bigcup_{i \in V(P)}V(H_i) \Big) \cup
    \Big( \bigcup_{ij \in E(P)}V(M_{ij}) \Big)
    \end{equation}
    and edge-set
    \begin{equation}\label{eq:subgraph of B, 2}
    \{s'w_i : i \in V(P)\} \cup 
    \Big( \bigcup_{i \in V(P)} E(H_i) \Big) 
    \cup 
    \Big( \bigcup_{ij \in E(P)}E(M_{ij}) \Big).
    \end{equation}
\end{fact}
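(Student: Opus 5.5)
The plan is to mirror the proof of Fact \ref{fact:subgraph}: I will write down the isomorphism explicitly and check that it is a bijection on vertices that matches the edge sets. By Construction \ref{construction:second construction}, $B_P = B(P,H,M,s,a,b)$ is built from $A(P,M,s,a,b)$ by hanging the copies $H_i$ for $i \in V(P)$. By Fact \ref{fact:subgraph}, applied to $A(G,M,s,a,b)$ in place of $A(G,H,s,a,b)$, the graph $A(P,M,s,a,b)$ is identified with the subgraph of $A(G,M,s,a,b) \subseteq B$ whose vertex-set is $\{s'\} \cup \{w_i : i \in V(P)\} \cup \bigcup_{ij \in E(P)} V(M_{ij})$ and whose edge-set is $\{s'w_i : i \in V(P)\} \cup \bigcup_{ij \in E(P)} E(M_{ij})$. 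Under this identification, the vertex $w_i$ of $B_P$ (for $i \in V(P)$) goes to the vertex $w_i$ of $B$, and the basic copy of $M$ in $B_P$ corresponding to $ij \in E(P)$ goes to $M_{ij}$ via $f_{ij}$.

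Next I will extend this identification to the hanging copies. For each $i \in V(P)$, the copy of $N = H - (M \setminus \{z\})$ hanging on $w_i$ in $B_P$ is sent to $H_i \subseteq B$ via $g_i$. This is consistent on the one shared vertex, since the hanging copy meets the rest of $B_P$ only at the vertex playing $z$, namely $w_i$, and $g_i(z) = w_i$. Since $w_i \in V(H_i)$, the vertex-set of the union is exactly \eqref{eq:subgraph of B}. The edge-set of $B_P$ is $E(A(P,M,s,a,b)) \cup \bigcup_{i \in V(P)} E(\text{copy of } N)$, and this maps onto \eqref{eq:subgraph of B, 2}. If $M = H$, then $H_i = \{w_i\}$ and there is nothing to add.

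It remains to check that the map is injective and preserves non-edges, i.e.\ that we get an isomorphism onto the subgraph with exactly the listed edges, not just a homomorphism. Injectivity holds because the new vertices of distinct $H_i$ are disjoint from each other and from $A(G,M,s,a,b)$, and the internal vertices of distinct $M_{ij}$ are disjoint. Edges are preserved in both directions because, by Fact \ref{lemma:edges-belong-uniquely, type 2}, every edge of $B$ other than the edges $s'w_i$ lies in a unique $M_{ij}$ or a unique $H_i$, and we only include those with $ij \in E(P)$ or $i \in V(P)$. All of this is routine; the only point needing care is the gluing at $w_i$ described above, and I do not expect any real obstacle.
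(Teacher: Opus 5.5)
Your proposal is correct and matches the paper's approach. The paper states this fact without proof, as the direct analogue of Fact~\ref{fact:subgraph}; your argument (apply that fact to $A(G,M,s,a,b)$, then glue in the hanging copies $H_i$, $i \in V(P)$, at $w_i$) is exactly the routine unwinding the paper leaves implicit. The closing appeal to Fact~\ref{lemma:edges-belong-uniquely, type 2} is harmless but unnecessary: injectivity on vertices, together with the listed edge-set being by definition the image of $E(B_P)$, already gives the isomorphism (and the listed subgraph need not be induced in $B$, e.g.\ $w_iw_j$ with $ij \in E(G)\setminus E(P)$ when $ab \in E(M)$).
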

Again, with a slight abuse of notation, we will sometimes write 
$B_P = B(P,H,M,s,a,b)$ for the subgraph of 
$B = B(G,H,M,s,a,b)$ with vertex-set and edge-set defined in \eqref{eq:subgraph of B} and \eqref{eq:subgraph of B, 2}, respectively. (By Fact \ref{fact:subgraph, type 2}, $B_P$ is indeed isomorphic to $B(P,H,M,s,a,b)$.)

\begin{lemma}\label{lemma:easy-inequality, type 2}
    Let $G$ be a graph on $[n]$, let $H,M,z,s,a,b$ be as in Construction \ref{construction:second construction}, and let 
    $B = B(G,H,M,s,a,b)$. 
    Suppose that $M = H$ or $z \in \{a,b\}$.
    Then 
    $\Rem_H(B) \geq \tau(G)$.
\end{lemma}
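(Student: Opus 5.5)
I will mimic the proof of Lemma \ref{lemma:easy-inequality}. Let $S \subseteq E(B)$ be such that $B - S$ is $H$-free; the goal is $|S| \geq \tau(G)$. I will build an edge set $S' \subseteq \{s'w_i : i \in [n]\}$ with $|S'| \leq |S|$, by charging each edge of $S$ to a single special edge, and then show that $C := \{i : s'w_i \in S'\}$ is a vertex cover of $G$.

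The set $S'$ is defined edge by edge, using Fact \ref{lemma:edges-belong-uniquely, type 2}.
\begin{itemize}
\item If $e \in S$ is of the form $s'w_i$, put $e$ into $S'$.
\item If $e \in E(M_{ij})$ for a (unique) $ij \in E(G)$, put one of $s'w_i, s'w_j$ into $S'$ arbitrarily.
\item If $e \in E(H_i)$ for a (unique) $i \in V(G)$, put $s'w_i$ into $S'$.
\end{itemize}
Every edge of $B$ falls into one of these three cases, so $S'$ is well defined. Each edge of $S$ contributes at most one edge to $S'$, hence $|S'| \leq |S|$.

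Now suppose for contradiction that some $ij \in E(G)$ (with $i<j$) has $i,j \notin C$. By the construction of $S'$, no edge of $S$ is one of $s'w_i, s'w_j$. No edge of $S$ lies in $M_{ij}$, since such an edge would have added $s'w_i$ or $s'w_j$ to $S'$. Likewise no edge of $S$ lies in $E(H_i) \cup E(H_j)$, since such an edge would have added $s'w_i$ or $s'w_j$. By Fact \ref{fact:copies-of-H-second-construction}, which applies because $M=H$ or $z\in\{a,b\}$, one of $B[M_{ij} \cup H_i]$ and $B[M_{ij}\cup H_j]$ contains a copy of $H$. That copy uses only edges of $M_{ij}$ together with edges of $H_i$ or of $H_j$, and none of these is in $S$. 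So $B-S$ contains $H$, a contradiction. Therefore $C$ is a vertex cover, and $|S| \geq |S'| = |C| \geq \tau(G)$.

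The only step needing care is checking that this copy of $H$ is made up exactly of edges of $E(M_{ij}) \cup E(H_i)$ (or $E(M_{ij}) \cup E(H_j)$). This holds because $M_{ij}$ and $H_i$ meet only in $w_i = f_{ij}(z)$, and the copy is precisely their union. Those edges are exactly the ones charged to $s'w_i$ or $s'w_j$, so no further obstacle is expected.
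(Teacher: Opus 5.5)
Your proposal is correct and is essentially the paper's own proof. Both charge each edge of $S$ to a single special edge $s'w_i$ via Fact \ref{lemma:edges-belong-uniquely, type 2}, and both use Fact \ref{fact:copies-of-H-second-construction} to conclude that $\{i : s'w_i \in S'\}$ is a vertex cover of $G$.
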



\begin{proof}
    We proceed similarly to the proof of Lemma \ref{lemma:easy-inequality}. Let $S \subseteq E(B)$ such that $B - S$ is $H$-free. We define another edge-set $S' \subseteq E(B)$ as follows. For each edge $e \in S$, if $e = s'w_i$ for some $i \in V(G)$, add $e$ to $S'$. Otherwise, by Fact \ref{lemma:edges-belong-uniquely, type 2}, there exists either a unique $ij \in E(G)$ with $e \in E(M_{ij})$ or a unique $i \in V(G)$ with $e \in E(H_i)$. In the former case, add $s'w_i$ or $s'w_j$ to $S'$ arbitrarily, and in the latter case, add $s'w_i$ to $S'$. This completes the definition of $S'$. Note that $|S'| \leq |S|$. Moreover, if for some $ij \in E(G)$ it holds that $s'w_i, s'w_j \not \in S'$, then $(E(M_{ij}) \cup E(H_i) \cup E(H_j)) \cap S = \emptyset$, which is impossible as $B - S$ is $H$-free, and either $B[M_{ij} \cup H_i] \cong H$ or $B[M_{ij} \cup H_j] \cong H$ by Fact \ref{fact:copies-of-H-second-construction}. Thus, $C := \{i \in V(G) : s'w_i \in S'\}$ is a vertex-cover of $G$, and so $|S| \geq |S'| \geq \tau(G)$ as desired.
\end{proof}


\begin{lemma}\label{thm:g-locally-a-forest, type 2}
    Let $B = B(G, H, M, s, a, b)$ and let $H'$ be a copy of $H$ in $B$. 
    Then there exists a subgraph $F \subseteq G$ with $v(F) \leq 2v(H)$ such that $H'$ is a subgraph of 
    $B_F = B(F,H,M,s,a,b)$.
\end{lemma}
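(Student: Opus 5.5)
We mimic the proof of Lemma \ref{thm:g-locally-a-forest}, building $F$ from the vertices of $H'$. We classify every vertex of $B$ other than $s'$ as one of three kinds:
\begin{enumerate}
\item a vertex $w_i$;
\item an internal vertex of some $M_{ij}$, i.e.\ a vertex of $V(M_{ij}) \setminus \{s',w_i,w_j\}$;
\item a non-$w_i$ vertex of some hanging copy, i.e.\ a vertex of $V(H_i) \setminus \{w_i\}$.
\end{enumerate}
By the construction, each such vertex determines a unique $i \in V(G)$ or a unique edge $ij \in E(G)$. We then define $F \subseteq G$ as follows. For each $w_i \in V(H')$ we put $i$ into $V(F)$. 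For each vertex of $H'$ in $V(H_i)\setminus\{w_i\}$ we also put $i$ into $V(F)$. For each vertex of $H'$ in $V(M_{ij}) \setminus \{s',w_i,w_j\}$ we put $i,j$ into $V(F)$ and $ij$ into $E(F)$. Finally, for each edge $w_iw_j \in E(H')$ we put $ij$ into $E(F)$; this is legitimate because such an edge lies in $M_{ij}$ with $ij \in E(G)$ by Fact \ref{lemma:edges-belong-uniquely, type 2}, since the hanging copies $H_i$ contain only the single vertex $w_i$ from $\{w_1,\dots,w_n\}$.

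Each vertex of $H'$ contributes at most two vertices to $F$, and the last rule adds only edges whose endpoints are already in $V(F)$. Hence $v(F) \leq 2v(H') = 2v(H)$.

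It remains to check $H' \subseteq B_F$, using the description of $B_F$ in Fact \ref{fact:subgraph, type 2}.

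\emph{Vertices.} The vertex $s'$ belongs to $B_F$ by definition. A vertex $w_i$ or a vertex of $H_i$ lies in $V(H_i)$ with $i \in V(F)$. An internal vertex of $M_{ij}$ lies in $V(M_{ij})$ with $ij \in E(F)$.

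\emph{Edges.} Take $e=uv \in E(H')$. If $e = s'w_i$, then $w_i \in V(H')$, so $i \in V(F)$ and $e \in E(B_F)$. Otherwise, Fact \ref{lemma:edges-belong-uniquely, type 2} gives two cases.
\begin{itemize}
\item If $e \in E(H_i)$, then one endpoint of $e$ is either $w_i$ or a vertex of $H_i \setminus \{w_i\}$, and in both cases $i \in V(F)$. Hence $e \in E(B_F)$.
\item If $e \in E(M_{ij})$ and $e \ne s'w_i, s'w_j$, then either some endpoint is internal to $M_{ij}$, so $ij \in E(F)$, or $e = w_iw_j$, and $ij \in E(F)$ by the last rule. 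Either way $e \in E(B_F)$.
\end{itemize}

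The only mild obstacle is the bookkeeping. Vertices of the $H_i$ must force only the single index $i$, and no edge of $B$ can join two different $H_i$'s or join $H_i$ to an internal vertex of some $M_{k\ell}$ other than through $w_i$. This is exactly the content of Fact \ref{lemma:edges-belong-uniquely, type 2}: it attributes every edge uniquely either to an $M_{ij}$ or to an $H_i$.
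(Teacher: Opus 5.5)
Your proposal is correct and follows essentially the same route as the paper. You define $F$ by the same three rules: indices $i$ whose hanging copy $H_i$ meets $H'$, edges $ij$ with $w_iw_j \in E(H')$, and edges $ij$ for which $H'$ contains an internal vertex of $M_{ij}$. You then spell out, via Fact \ref{lemma:edges-belong-uniquely, type 2}, the checks $V(H') \subseteq V(B_F)$ and $E(H') \subseteq E(B_F)$, which the paper leaves as ``easy to see''.
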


\begin{proof}
    The proof is similar to that of Lemma \ref{thm:g-locally-a-forest}. 
    Namely, define the subgraph $F \subseteq G$ as follows: For each $i \in V(G)$ such that 
    $V(H') \cap V(H_i) \neq \emptyset$, put $i$ into $V(F)$. 
    In particular, if $w_i \in V(H')$ then $i \in V(F)$.
    For each $ij \in E(G)$ such that 
    $w_iw_j \in E(H')$, put the edge $ij$ into $E(F)$ (the vertices $i,j$ are already in $V(F)$ by the above).
    Finally, for each $ij \in E(G)$ such that 
    $$
    V(H') \cap (V(M_{ij}) \setminus \{s',w_i,w_j\}) \neq \emptyset,
    $$
    put both vertices $i,j$ into $V(F)$ and the edge $ij$ into $E(F)$. It is easy to see that 
    $v(F) \leq 2v(H') = 2v(H)$,
    $V(H') \subseteq V(B_F)$, and $E(H') \subseteq E(B_F)$.
\end{proof}

\noindent
The second main result of this section is as follows:
\begin{theorem}\label{theorem:connected H, delta >= 2, type 2}
    Let $H$ be a connected graph with $\delta(H) \geq 2$. Let $M$ be a leaf biconnected component of $H$, and, if $M \neq H$, let $z$ be the cut vertex separating $M$ from $V(H) \setminus \nolinebreak M$. Let $s,a,b \in M$ be a good choice for $M$ with $s \neq z$ (if $z$ is defined); such a choice exists by Theorem \ref{theorem:2-connected H}. 
    Suppose that $z \in \{a,b\}$ (if $z$ is defined). 
    Let $G$ be a forest with $V(G) = [n]$, and let 
    $B = B(G, H, M, s, a, b)$ as in Construction \ref{construction:second construction}. Let $C$ be a vertex-cover of $G$. Then 
    $B' := B - \{s'w_i : \nolinebreak i \in \nolinebreak C\}$ is $H$-free.
\end{theorem}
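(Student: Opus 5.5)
Suppose for contradiction that $B'$ contains a copy $H'$ of $H$, and fix an isomorphism $f : H \to H'$. The plan is to locate the image $M' := f(M)$, show it must be a copy of $M$ living inside the $A(G,M,s,a,b)$ part of $B'$, and then invoke Theorem \ref{theorem:2-connected H} (for $M$ with the good choice $s,a,b$) to get a contradiction. If $M = H$ then $B = A(G,M,s,a,b)$ and $B' = A(G,M,s,a,b) - \{s'w_i : i\in C\}$, so the statement is exactly Theorem \ref{theorem:2-connected H}. So assume $M \neq H$, with $z \in \{a,b\}$, say $z = a$.

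\textbf{Step 1: locate the 2-connected blocks of $H'$.} Every 2-connected subgraph $K$ of $B'$ lies either inside some hanging copy $H_i$ or inside $A' := A(G,M,s,a,b) - \{s'w_i : i\in C\}$. This holds because $w_i$ is a cut vertex of $B$ separating $V(H_i)\setminus\{w_i\}$ from the rest. Hence each block of $H'$ of size $\geq 3$ lies entirely in some $H_i$ or in $A'$. In particular $M'$, being 2-connected, lies in some $H_i$ or in $A'$. If $M' \subseteq A'$, we are done by Theorem \ref{theorem:2-connected H}, since $A'$ is then $M$-free. So the main work is to rule out that every block of $H'$ isomorphic to $M$ avoids $A'$.

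\textbf{Step 2: a counting argument on blocks.} Let $\beta$ be the number of blocks of $H$ isomorphic to $M$. The hanging graph $N = H - (M\setminus\{z\})$ contains exactly $\beta-1$ such blocks. Now consider how $H'$ can sit in $B'$. Since $H'$ is connected, the hanging copies it meets are joined through $A'$. Suppose $H'$ has no $M$-block in $A'$. Then every block of $H'$ inside $A'$ is a 2-connected proper subgraph of the $A'$-part. I would then argue that $H'$ meets at most one $H_i$ substantially: if $H'$ met $H_i$ and $H_j$ for $i\neq j$, the part of $H'$ in $A'$ would be a connected subgraph containing the cut vertices $w_i,w_j$. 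Comparing block–cut trees, the copy would need at least $\beta$ blocks isomorphic to $M$ among its hanging parts. Since $|V(H')| = |V(H)|$, and each $H_i \cap H'$ together with the remaining structure has at most $|V(N)|$ vertices, I expect a vertex count to give a contradiction.

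The cleanest way to organise this is probably to choose $M$ maximizing or minimizing an invariant, but $M$ is given. So instead I would use extremality within $H'$: take a leaf block of $H'$ that is farthest in the block tree from $A'$. If $H'$ lies within $H_i \cup A'$ with its $A'$-part attached only at $w_i$, then $H' \cap H_i$ is a subgraph of $N$ while $H' - (H_i\setminus\{w_i\})$ is a connected graph in $A'$ containing $w_i$. Counting vertices, the $A'$-part has at least $|M|-1$ vertices outside $w_i$, and its blocks must match the blocks of $H$ missing from $N$. The isomorphism-invariant multiset of blocks then forces an $M$-block into $A'$. If $H'$ meets several $H_i$'s, the same multiset-of-blocks count applies, since each $H_i$ contributes at most $\beta-1$ copies of $M$ while its vertices total $|V(N)|$.

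\textbf{Step 3: conclude.} Once some block of $H'$ isomorphic to $M$ is shown to lie in $A'$, Theorem \ref{theorem:2-connected H} (using that $s,a,b$ is a good choice for $M$ and $G$ is a forest) gives a contradiction.

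I expect the main obstacle to be Step 2: excluding a copy whose $M$-blocks all sit in hanging copies while the rest is spread over $A'$. This needs a careful block-tree and vertex-count comparison. In particular, it must use that $A'$ contributes the missing blocks only through structures whose 2-connected pieces are proper subgraphs of copies of $M$, and it may need a multiset-of-blocks invariant ordered by size.
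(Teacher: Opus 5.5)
Your Steps 1 and 3 are sound. Each $w_i$ is a cut vertex of $B$, so every 2-connected subgraph of $B'$ lies in a single $H_i$ or in $A' = A(G,M,s,a,b)-\{s'w_i : i\in C\}$, and $A'$ is $M$-free because $s,a,b$ is a good choice. The case where $H'$ meets only one hanging copy is also easy if you count \emph{all} subgraphs isomorphic to $M$: $H'$ has $|\mathcal{M}(H)|$ of them, none lies in $A'$, and $H'\cap H_i\subseteq H_i\cong N$ contains at most $|\mathcal{M}(N)|=|\mathcal{M}(H)|-1$.

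The gap is Step 2, which you flag as the main obstacle. As written it is not an argument: the case where $H'$ meets two or more hanging copies is left at ``I expect a vertex count to give a contradiction''. The invariants you propose do not give one. Each $H'\cap H_i$ can have up to $|V(N)|$ vertices and up to $|\mathcal{M}(H)|-1$ copies of $M$. So two or more hanging copies can together account for $|V(H)|=|V(N)|+|M|-1$ vertices and $|\mathcal{M}(H)|$ copies of $M$ without any numerical contradiction. Counting blocks isomorphic to $M$ is not even monotone under taking subgraphs. A block of $H'$ inside $H_i$ need not be a block of $H_i$: for $M=K_3$ attached to a $K_5$-block, $N=K_5$ has no triangle blocks, yet it contains a bowtie with two triangle blocks. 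So ``$N$ has exactly $\beta-1$ blocks isomorphic to $M$'' does not bound the $M$-blocks of $H'\cap H_i$.

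The missing idea is a way to force $H'$ into a single hanging copy. The paper does this by induction on $|V(H)|$:
\begin{enumerate}
\item Put $F := H\big[M\cup\bigcup_i g_i^{-1}(V(H')\cap V(H_i))\big]$. One checks that $F$ is connected, $\delta(F)\geq 2$, and $M$ is still a leaf block of $F$ at $z$.
\item If $F\neq H$, then $V(H')\subseteq V(B(G,F,M,s,a,b))$. Hence $f(F)$ is a copy of $F$ in $B(G,F,M,s,a,b)-\{s'w_i : i\in C\}$, contradicting the induction hypothesis.
\item If $F=H$, every vertex of $V(H)\setminus M$ is used in some hanging copy. Then every cut vertex $w\neq z$ of $H$ has an image $g_i(w)$ that is a cut vertex of $H'$. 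In addition, every $w_i$ with $g_i(V(H)\setminus M)\cap V(H')\neq\emptyset$ is a cut vertex of $H'$. Comparing numbers of cut vertices in $H$ and $H'$ shows there is at most one such $i$.
\end{enumerate}
Only after this reduction does the $\mathcal{M}$-count above finish the proof.
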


We see that the conditions in Theorems \ref{theorem:connected H, delta >= 2, type 1} and \ref{theorem:connected H, delta >= 2, type 2} are complementary (i.e., 
$z \notin \{a,b\}$ and $z \in \{a,b\}$).
The proofs of these theorems are given in Sections 
\ref{subsec:type 1} and \ref{subsec:type 2}, respectively. 

\subsection{Proof of Theorem \ref{theorem:connected H, delta >= 2, type 1}}\label{subsec:type 1}
\begin{proof}[Proof of Theorem \ref{theorem:connected H, delta >= 2, type 1}]
The proof is similar to that of Theorem \ref{theorem:2-connected main} (and in fact somewhat simpler). We proceed by induction on $|V(H)|$. The base case is that $H$ is 2-connected (and hence $M = H$); this case is immediate, because the conditions of Theorem \ref{theorem:connected H, delta >= 2, type 1} require that the triple $s,a,b$ is a good choice for $M = H$, which means that the conclusion of Theorem \ref{theorem:2-connected H} holds.

Suppose now that $H$ is not 2-connected, and let $M,z$ be as in the statement of the theorem; in particular, $z \neq a,b$.
Let $A = A(G,H,s,a,b)$ and
$A' = A - \{s'w_i : i \in C\}$, and suppose by contradiction that $A'$ contains a copy $H'$ of $H$. 
Let $F$ be the subgraph of $H$ induced on the vertex \nolinebreak set
$$
V(F) := M \cup \bigcup_{ij \in E(G)}
f_{ij}^{-1}(V(H') \cap V(H_{ij})).
$$
If $F \neq H$ then we will apply the induction hypothesis for the graph $F$, and if $F = H$ then we will obtain a contradiction directly. First we need to verify that $F$ satisfies the conditions of Theorem \ref{theorem:connected H, delta >= 2, type 1}, i.e., that $F$ is connected and $\delta(F) \geq 2$. 
    

\begin{claim}\label{claim:type 1, F properties}
$F$ is connected and $\delta(F) \geq 2$.
\end{claim}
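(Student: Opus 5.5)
The plan is to check the two properties directly from the definition $V(F) = M \cup \bigcup_{ij \in E(G)} f_{ij}^{-1}(X_{ij})$, where $X_{ij} := V(H') \cap V(H_{ij})$. Two facts drive everything. First, $f_{ij}^{-1}(\{s',w_i,w_j\}) = \{s,a,b\} \subseteq M$. Second, by Fact \ref{fact:outside vertices}, internal vertices of $H_{ij}$ have all their $A$-neighbours inside $H_{ij}$. We also use that $F$ is an induced subgraph of $H$ and $f_{ij}$ is an isomorphism, so every edge of $A[X_{ij}]$ pulls back to an edge of $F$.

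\emph{Minimum degree.} Let $v \in V(F)$.
\begin{itemize}
\item If $v \in M$, then $d_F(v) \geq d_{M}(v) \geq 2$, since $F \supseteq H[M]$ and $H[M]$ is 2-connected.
\item Otherwise $v = f_{ij}^{-1}(u)$ for some $ij$ and some $u \in X_{ij}$. Since $v \notin M \supseteq \{s,a,b\}$, the vertex $u$ is internal to $H_{ij}$.
\item By Fact \ref{fact:outside vertices}, all neighbours of $u$ in $A$, and hence in $H'$, lie in $V(H_{ij})$. So $u$ has at least $d_{H'}(u) \geq \delta(H) \geq 2$ neighbours inside $X_{ij}$.
\item Their preimages under $f_{ij}$ are neighbours of $v$ in $H$ lying in $V(F)$. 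Since $F$ is induced, $d_F(v) \geq 2$.
\end{itemize}

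\emph{Connectivity.} The set $M$ induces a connected subgraph, so it suffices to show that for each $ij$ with $X_{ij} \neq \emptyset$, the set $f_{ij}^{-1}(X_{ij})$ induces a connected subgraph of $F$ that meets $M$.
\begin{itemize}
\item For connectedness, take $x,y \in X_{ij}$. Since $H'$ is connected, there is an $x,y$-path $Q$ in $H'$.
\item Lemma \ref{claim:exists-subpath-between-vertices-in-same-copy} applies since $G$ is a forest. It gives an $x,y$-path $Q'$ with $V(Q') \subseteq V(H_{ij}) \cap V(Q) \subseteq X_{ij}$. Then $f_{ij}^{-1}(Q')$ is a path in $H[f_{ij}^{-1}(X_{ij})] \subseteq F$.
\item For meeting $M$, it suffices that $X_{ij}$ contains one of $s',w_i,w_j$. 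Suppose not. Then $X_{ij}$ consists only of internal vertices. By Fact \ref{fact:outside vertices} and connectivity of $H'$, this forces $V(H') \subseteq V(H_{ij})$.
\item That inclusion is impossible. $A[V(H_{ij})] = H_{ij}$ has exactly $v(H)$ vertices, so it would force $H' = H_{ij}$. But one of $s'w_i, s'w_j$ lies in $H_{ij}$ and was deleted in $A'$, since $C$ is a vertex cover. This is the same argument as in Lemma \ref{lem:|P|>=2}, which needs $\delta(H) \geq 2$ only through $V(H') \subseteq V(H_{ij})$.
\end{itemize}

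The main obstacle is the connectivity step: arguing that each piece $f_{ij}^{-1}(X_{ij})$ stays connected inside $H$ rather than merely in $A$. Lemma \ref{claim:exists-subpath-between-vertices-in-same-copy} is exactly what handles this. The anchoring to $M$ relies on the observation that $H'$ cannot sit entirely inside one basic copy.
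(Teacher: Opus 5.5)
Your proof is correct. The minimum-degree part is the same as the paper's.

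For connectivity you take a slightly different route. You show that each trace $f_{ij}^{-1}(X_{ij})$ is connected in $H$ via Lemma \ref{claim:exists-subpath-between-vertices-in-same-copy}, which uses that $G$ is a forest and that $A[V(H_{ij})]=H_{ij}$. Separately, you show that $X_{ij}$ contains one of $s',w_i,w_j$. This is essentially the alternative the paper mentions and declines, namely Lemmas \ref{lemma:intersections with H_{ij} are k-connected} and \ref{claim:generalized-glue-on-edges} with $k=1$.

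The paper instead argues vertex by vertex. Take $v \in V(F)\setminus M$ with $f_{ij}(v)\in V(H')$. Since $V(H')\not\subseteq V(H_{ij})$ by Lemma \ref{lem:|P|>=2} and $H'$ is connected, a path in $H'$ from $f_{ij}(v)$ to a vertex outside $V(H_{ij})$ must pass through $\{s',w_i,w_j\}$, by Fact \ref{fact:outside vertices}. Its initial segment up to the first such vertex lies in $H'[V(H_{ij})]$. Pulling it back gives a path in $F$ from $v$ to $\{s,a,b\}\subseteq M$.

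That one truncated path supplies both the internal connection and the anchoring to $M$. So the paper needs neither the forest hypothesis nor Lemma \ref{claim:exists-subpath-between-vertices-in-same-copy} here. Your route proves the stronger but unneeded fact that each piece is connected on its own, which parallels the 2-connected case.

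A minor point: in your anchoring step the contradiction comes even faster. $V(H')$ would have to lie among the $v(H)-3$ internal vertices of $H_{ij}$, which is too few.
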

\begin{proof}
    We first show that $F$ is connected. 
    We could do this by using Lemmas \ref{lemma:intersections with H_{ij} are k-connected} and \ref{claim:generalized-glue-on-edges} (as in the proof of Theorem \ref{theorem:2-connected main}), but here the situation is simple enough to argue directly, as follows.
    By definition, we have $M \subseteq V(F)$, 
    and $M$ is connected. 
    Hence, it suffices to show that every 
    $v \in V(F) \setminus M$ has a path in $F$ to some vertex of $M$. 
    By the definition of $F$, there must exist some 
    $ij \in E(G)$ such that $f_{ij}(v) \in V(H')$ (as $v \in V(F) \setminus M$). By Lemma \ref{lem:|P|>=2}, $V(H') \not \subseteq V(H_{ij})$ (here we use the assumption that $\delta(H) \geq 2$). Since $H'$ is connected (as a copy of $H$), there must exist a path in $H'$ from $f_{ij}(v)$ to $V(H') \setminus V(H_{ij})$. But by Fact \ref{fact:outside vertices}, the only vertices in $V(H_{ij})$ which may have neighbors outside of $V(H_{ij})$ are $s',w_i,w_j$. Hence, there is a path $Q$ inside $H'[V(H_{ij})]$ from $f_{ij}(v)$ to $f_{ij}(M)$. Now $f_{ij}^{-1}(Q)$ is a path from $v$ to $M$, and 
    $f_{ij}^{-1}(Q) \subseteq 
    f_{ij}^{-1}(V(H') \cap V(H_{ij})) \subseteq V(F)$, \nolinebreak as \nolinebreak required. 
    

    Next we prove that $\delta(F) \geq 2$. So let $v \in V(F)$ and let us show that $d_F(v) \geq 2$. If $v \in M$ then this holds because $d_F(v) \geq d_M(v) \geq 2$ (as $M \subseteq V(F)$ and $M$ is 2-connected). Now suppose that $v \in V(H) \setminus M$. Since $v \in V(F)$, there exists some $ij \in E(G)$ such that $f_{ij}(v) \in V(H')$. We have $\delta(H') \geq 2$ (because $H'$ is a copy of $H$). Also, 
    $f_{ij}(v) \notin \{s',w_i,w_j\}$ (because $v \notin M$), and so (by Fact \ref{fact:outside vertices}), the only neighbors of $f_{ij}(v)$ in $A$ are inside $V(H_{ij})$. It follows that 
    $|N(f_{ij}(v)) \cap V(H') \cap V(H_{ij})| \geq 2$. 
    Applying $f_{ij}^{-1}$ to the set 
    $N(f_{ij}(v)) \cap V(H') \cap V(H_{ij})$, we get a set of at least two neighbors of $v$ in $F$, as required. 
\end{proof}

\noindent
We are now ready to proceed with the heart of the proof.

\paragraph{Case 1:} $F \neq H$; namely, $V(F) \subsetneq V(H)$. In this case we apply the induction hypothesis. Note that all conditions of Theorem \ref{theorem:connected H, delta >= 2, type 1} are satisfied. Indeed, $F$ is connected and $\delta(F) \geq 2$ (by Claim \ref{claim:type 1, F properties}); $M$ is a leaf biconnected component of $F$, and if $F \neq M$ then $z$ separates $M$ from $V(F) \setminus M$; and $s,a,b \in M$ is a good choice for $M$ with $z \notin \{s,a,b\}$. 

Observe that $A_F := A(G,F,s,a,b)$ can be identified with the subgraph of $A = A(G,H,s,a,b)$ induced by 
$$
\{s'\} \cup \{w_1,\dots,w_n\} \cup 
\bigcup_{ij \in E(G)} f_{ij}(V(F)).
$$
Moreover, by the definition of the graph $F$, it holds that 
$V(H') \subseteq V(A_F)$. 
Now, let $f : H \rightarrow H'$ be an isomorphism from $H$ to its copy $H'$, and let $F' := f(F)$ be the copy of $F$ in $H'$. Then $F'$ is a copy of $F$ in $A'_F := A_F - \{s'w_i : i \in C\}$, contradicting the induction hypothesis. This completes the proof in Case 1.

\paragraph{Case 2:} $F = H$. 
By the definition of $F$, this means that for each $v \in V(H) \setminus M$, there exists some $ij \in E(G)$ such that $f_{ij}(v) \in V(H')$. 
    
    \begin{claim}\label{claim:unique-i-1-connected-case-1}
        There exists at most one $ij \in E(G)$ such that $f_{ij}(V(H) \setminus M) \cap V(H') \neq \emptyset$.
    \end{claim}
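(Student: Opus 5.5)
The plan is to argue by contradiction: suppose there are two distinct edges $ij, k\ell \in E(G)$ with $f_{ij}(v) \in V(H')$ and $f_{k\ell}(v') \in V(H')$ for some $v,v' \in V(H)\setminus M$. The key structural fact is that $V(H)\setminus M$ is attached to the rest of $H$ only through the cut vertex $z$. Since $z \notin \{s,a,b\}$, its image $f_{ij}(z)$ is an internal vertex of $H_{ij}$.

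First, by Fact \ref{fact:outside vertices}, the set $X_{ij} := f_{ij}(V(H)\setminus M)$ consists of internal vertices of $H_{ij}$. Inside $H_{ij}$, every path from $X_{ij}$ to $\{s',w_i,w_j\}$ passes through $f_{ij}(z)$, because $z$ separates $V(H)\setminus M$ from $M \ni s,a,b$. Hence $z_{ij} := f_{ij}(z)$ separates $X_{ij}$ from $V(A)\setminus (X_{ij}\cup\{z_{ij}\})$ in all of $A$. The same holds for $k\ell$, and $X_{ij}\cup\{z_{ij}\}$ is disjoint from $X_{k\ell}\cup\{z_{k\ell}\}$.

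Next, I would use the block structure of $H'$ to reach a contradiction. Each vertex of $H'$ in $X_{ij}$ lies in a part of $H'$ that hangs off the single vertex $z_{ij}$ and lives inside the copy $f_{ij}(H - (M\setminus\{z\}))$. Let $Y_{ij} := V(H')\cap X_{ij}$, and note $z_{ij} \in V(H')$ (since $H'$ is connected and $V(H')\not\subseteq V(H_{ij})$ by Lemma \ref{lem:|P|>=2}). Then $H'[Y_{ij}\cup\{z_{ij}\}]$ is a union of blocks of $H'$ separated from the rest of $H'$ by $z_{ij}$. The same is true for $k\ell$. The natural invariant to count is the vertex-set sizes of the ``branches'' of $H'$ at cut vertices. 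The vertices of $H'$ outside $M$-type material are distributed over at least two disjoint pendant pieces $Y_{ij}$ and $Y_{k\ell}$, each of which is a proper subset (up to the isomorphism $f_{ij}$) of $V(H)\setminus M$. Meanwhile $F = H$ forces every $v \in V(H)\setminus M$ to be realized in some copy.

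Then I would find a contradiction via a leaf block. Consider the block-cut tree of $H$ rooted at $M$, and take a block $M^*$ of $H$ outside $M$ that is farthest from $M$, i.e.\ a leaf block other than $M$ (one exists, since $M\neq H$ and $\delta(H)\ge 2$ means leaf blocks are 2-connected). Every copy of $H$ has the same number of such 2-connected leaf blocks, each isomorphic to the corresponding one of $H$. Pieces in $H'$ built from $Y_{ij}$ are subgraphs of $f_{ij}(N)$, where $N = H-(M\setminus\{z\})$. I would try to show that $H'$ contains at least as many leaf blocks among the $Y$-pieces as $H$ has in $N$, plus at least one leaf block coming from $M$-material. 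The latter holds because $H'$ must contain a leaf block not hanging at any $z_{ij}$: the part through $s'$ involves at least two basic copies. A cleaner alternative is to count vertices. If $|Y_{ij}|+|Y_{k\ell}| \le |V(H)\setminus M|$ fails, we are done; otherwise the $M$-side of $H'$ has at most $|M|$ vertices while containing parts of at least two basic copies, and I would argue that it is then forced to be a copy of $M$ supported on a single $H_{ij}$. That contradicts Lemma \ref{lem:|P|>=2} in its $M$-version, i.e.\ the goodness of $s,a,b$ for $M$.

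I expect the main obstacle to be making this counting rigorous: identifying in $H'$ which block plays the role of $M$, and showing the pendant pieces cannot reassemble $N$ from two copies. My first attempt would be a vertex-count argument comparing the largest pendant component at a cut vertex in $H$ and in $H'$.
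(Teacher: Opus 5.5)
\textbf{There is a genuine gap.} Your observations are correct: each $z_{ij}=f_{ij}(z)$ lies in $H'$ and separates $Y_{ij}$ from the rest of $H'$. After that, though, you only sketch two counting strategies and concede that making them rigorous is the main obstacle, and neither works as stated.

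\emph{Leaf blocks.} The leaf blocks of $H$ are $M$ together with the leaf blocks of $H$ lying in $N$. So your target (as many leaf blocks in the $Y$-pieces as $H$ has in $N$, plus one from the $M$-material) just reproduces the leaf-block count of $H$ and yields no contradiction even if proved. You also give no reason the $Y$-pieces contain that many leaf blocks, since $F=H$ only guarantees that each vertex of $V(H)\setminus M$ is realized in \emph{some} copy.

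\emph{Vertex counts.} You give no reason why $|Y_{ij}|+|Y_{k\ell}|>|V(H)\setminus M|$ would be contradictory. In the other branch, $H'$ has \emph{at least} (not at most) $|M|$ vertices outside $Y_{ij}\cup Y_{k\ell}$. Finally, nothing forces the $M$-side of $H'$ to be a copy of $M$; goodness of $s,a,b$ only says it contains none.

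\textbf{The missing idea: count cut vertices.} You already have half of it. Let $W$ be the set of cut vertices of $H$ and $I$ the set of edges in question. Your separation observation shows that $f_{ij}(z)$ is a cut vertex of $H'$ for every $ij\in I$.

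For the other half, take $w\in W\setminus\{z\}$. Then $w\notin M$, so pick $p$ separated from $M$ by $w$. Since $F=H$, some $ij$ has $f_{ij}(p)\in V(H')$. We have $V(H')\not\subseteq V(H_{ij})$ by Lemma \ref{lem:|P|>=2}, and $H'$ is connected, so by Fact \ref{fact:outside vertices} $H'$ also contains $f_{ij}(q)$ for some $q\in\{s,a,b\}$. Lemma \ref{lemma:H'-contains-intermediate-vertices-generalized} with $k=1$ then makes $f_{ij}(w)$ a cut vertex of $H'$.

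Because $z\notin\{s,a,b\}$ and $w\notin M$, all these cut vertices are internal vertices of basic copies, hence pairwise distinct. So $H'$ has at least $|W|-1+|I|$ cut vertices, whereas $H'\cong H$ has exactly $|W|$. This forces $|I|\le 1$.
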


    \begin{proof}

        Let $I$ be the set of all $ij \in E(G)$ such that 
        $f_{ij}(V(H) \setminus M) \cap V(H') \neq \emptyset$. 
        To prove that $|I| \leq 1$, we will show that 
        if $|I| \geq 2$ then $H'$ contains more 1-cuts (i.e., cut vertices) than $H$, which is impossible as $H'$ is a copy of $H$. Let $W$ be the set of all $w \in V(H)$ which are cut vertices of $H$. In particular, $z \in W$. 

        We will prove the following two claims:
        \begin{enumerate}
            \item[(i)] For every $w \in W \setminus \{z\}$, there exists $ij \in E(G)$ such that $f_{ij}(w)$ is a cut vertex of $H'$. 
            \item[(ii)] For every $ij \in I$, $f_{ij}(z)$ is a cut vertex of $H'$. 
        \end{enumerate}
        Note that all cut vertices of $H'$ obtained by Items (i)-(ii) are distinct. Indeed, if 
        $f_{ij}(w) = f_{k\ell}(w')$ for some $w,w' \in W$ and edges 
        $ij,k\ell \in E(G)$, then 
        $ij = k\ell$ because $f_{ij}(w),f_{k\ell}(w')$ are internal vertices of the basic copies $H_{ij},H_{k\ell}$ (here we use the assumption that $z \notin \{s,a,b\}$) and then $w=w'$ (because $f_{ij}$ is injective). 
        Hence, combining (i) and (ii), we conclude that $H'$ has at least $(|W|-1) + |I|$ cut vertices. 
        On the other hand, $H'$ has exactly $|W|$ cut vertices (since $H' \cong H$). Thus, $|I| \leq 1$, \nolinebreak as \nolinebreak required. 

        It remains to prove (i) and (ii). We begin with (i). Let $w \in W \setminus \{z\}$, and let $p \in V(H)$ be such that $M$ and $p$ are in different components of $H - w$. Note that 
        $w \notin M$ (since $z$ is the only cut vertex of $H$ contained in $M$). Since $p \notin M$, there exists $ij \in E(G)$ such that 
        $f_{ij}(p) \in V(H')$ (as explained above). 
        By Lemma \ref{lem:|P|>=2}, $V(H') \not\subseteq V(H_{ij})$ (here we use the assumption that $\delta(H) \geq 2$). Also, by Fact \ref{fact:outside vertices}, $s',w_i,w_j$ are the only vertices in $V(H_{ij})$ which may have neighbors outside of $V(H_{ij})$. Since $H'$ is connected, it follows that 
        $V(H') \cap \{s',w_i,w_j\} \neq \emptyset$. In particular, there exists $q \in \{s,a,b\} \subseteq M$ such that $f_{ij}(q) \in V(H')$. 
        Note that $w$ separates $p,q$ in $H$.
        Hence, by Lemma \ref{lemma:H'-contains-intermediate-vertices-generalized} (with parameter $k=1$), we have that $f_{ij}(w)$ is a separator of $H'$, as required. This proves (i).

        For (ii), fix any $ij \in I$. By the definition of $I$, there exists 
        $p \in V(H) \setminus M$ such that $f_{ij}(p) \in V(H')$. In the previous paragraph we already proved that there also exists 
        $q \in \{s,a,b\}$ such that $f_{ij}(q) \in V(H')$. Since $z \neq s,a,b$, we see that $z$ separates $p$ and $q$ in $H$ (as $p \in V(H) \setminus M$ and 
        $q \in M \setminus \{z\}$). Now, by Lemma \ref{lemma:H'-contains-intermediate-vertices-generalized}, $f_{ij}(z)$ is a separator of $H'$, proving (ii). 
    \end{proof}
    We now use Claim \ref{claim:unique-i-1-connected-case-1} to obtain a contradiction. 
    Let $\mathcal{M}(H)$ denote the set of copies of $M$ in $H$, and let $\mathcal{M}(H')$ denote the set of copies of $M$ in $H'$. 
    \begin{claim}\label{claim:type 1, M-copies}
        $|\mathcal{M}(H)| > |\mathcal{M}(H')|$.
    \end{claim}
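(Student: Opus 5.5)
The plan is to derive the inequality by comparing the copies of $M$ on the two sides through the isomorphism $f_{ij}$ of a single basic copy. Since $H' \cong H$, the inequality is impossible, so it is exactly the contradiction that closes Case 2. We are in Case 2 with $F = H$, so every $v \in V(H)\setminus M$ has some $ij$ with $f_{ij}(v)\in V(H')$. Also $V(H)\setminus M \neq \emptyset$ because $H$ is not 2-connected. By Claim \ref{claim:unique-i-1-connected-case-1} there is therefore exactly one edge $e_0 = ij \in E(G)$ with $f_{e_0}(V(H)\setminus M)\cap V(H')\neq\emptyset$, and in fact $f_{e_0}(V(H)\setminus M)\subseteq V(H')$. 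Put $N := H[(V(H)\setminus M)\cup\{z\}]$. I would show that every copy $K$ of $M$ in $H'$ satisfies $V(K)\subseteq f_{e_0}(V(N))$. Then $K\mapsto f_{e_0}^{-1}(K)$ injects $\mathcal{M}(H')$ into the set of copies of $M$ in $N\subseteq H$. This set misses the copy $M$ itself, because $M\not\subseteq N$ (we have $s\in M\setminus\{z\}$). Hence $|\mathcal{M}(H')|\le|\mathcal{M}(H)|-1$.

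For the containment, fix a copy $K$ of $M$ in $H'$; it is 2-connected. There are two cases.

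Suppose first that $K$ meets $f_{e_0}(V(H)\setminus M)$. Because $z\notin\{s,a,b\}$, $f_{e_0}(z)$ is an internal vertex of $H_{e_0}$. By Fact \ref{fact:outside vertices}, the vertices of $f_{e_0}(V(H)\setminus M)$ have neighbours in $A$ only inside $H_{e_0}$. Inside $H_{e_0}\cong H$, the vertex $f_{e_0}(z)$ separates $f_{e_0}(V(H)\setminus M)$ from $f_{e_0}(M\setminus\{z\})\ni s',w_i,w_j$. So $f_{e_0}(z)$ is a cut vertex of $A$ separating $f_{e_0}(V(H)\setminus M)$ from the rest of $A$. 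A 2-connected $K$ meeting that side must lie entirely in it together with the cut vertex, so $V(K)\subseteq f_{e_0}(V(N))$.

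Otherwise $K$ avoids $f_{e_0}(V(H)\setminus M)$. For every other edge $k\ell\neq e_0$ it also avoids $f_{k\ell}(V(H)\setminus M)$, since by Claim \ref{claim:unique-i-1-connected-case-1} $H'$ does. So $K$ lies in the subgraph $\{s',w_1,\dots,w_n\}\cup\bigcup_{k\ell} f_{k\ell}(M)$. This subgraph is exactly $A(G,M,s,a,b)$ sitting inside $A$, in the sense of the identification used in Case 1 with $F = M$. Consequently $K\subseteq A(G,M,s,a,b)-\{s'w_t : t\in C\}$. But $s,a,b$ is a good choice for $M$ and $G$ is a forest, so this graph is $M$-free by the conclusion of Theorem \ref{theorem:2-connected H}. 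This case is therefore impossible.

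The step I expect to need the most care is the cut-vertex argument in the first case: checking that $f_{e_0}(z)$ really separates $f_{e_0}(V(H)\setminus M)$ from everything else in all of $A$, and not merely inside $H_{e_0}$. This uses $z\notin\{s,a,b\}$, Fact \ref{fact:outside vertices}, and the fact that $z$ separates $M$ from $V(H)\setminus M$ in $H$. Injectivity of $f_{e_0}^{-1}$ is immediate, and $f_{e_0}^{-1}(K)$ is a copy of $M$ in $H$ because $f_{e_0}$ is an isomorphism and $K\subseteq H_{e_0}$.
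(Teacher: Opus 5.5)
Your proof is correct and follows essentially the same route as the paper. A copy of $M$ avoiding the unique hanging part $f_{e_0}(V(H)\setminus M)$ would lie in $A(G,M,s,a,b)-\{s'w_t : t\in C\}$, which is $M$-free by the good choice. Otherwise the cut vertex $f_{e_0}(z)$ forces the $2$-connected copy into $f_{e_0}(V(H)\setminus(M\setminus\{z\}))$, which contains fewer copies of $M$ than $H$ does. Your per-copy case split, which uses $f_{e_0}(z)$ as a cut vertex of $A$ directly, is a slightly cleaner packaging of the paper's argument. The paper additionally checks that $H'$ has vertices outside $f_{e_0}(V(H)\setminus(M\setminus\{z\}))$, a step your version does not need.
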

    \begin{proof}
        Observe that $A_M := A(G,M,s,a,b)$ can be identified with the subgraph of
        $A = A(G,H,s,a,b)$ induced by
        $$
        \{s'\} \cup \{w_1,\dots,w_n\} \cup \bigcup_{ij \in E(G)} f_{ij}(M).
        $$
        Similarly, 
        $A'_M := A_M - \{s'w_i : i \in C\}$ is the subgraph of $A'$ induced by the same vertex set (and via the same identification). 
        Since $s,a,b$ is a good choice for $M$, the graph $A'_M$ is $M$-free. Hence, $H'$ has no copies of $M$ which are contained in $V(A'_M) = V(A_M)$. Also, by Claim
        \ref{claim:unique-i-1-connected-case-1}, there is at most one 
        $ij \in E(G)$ such that $H'$ contains vertices from $f_{ij}(V(H) \setminus M)$. 
        This means that $V(H') \subseteq V(A_M) \cup f_{ij}(V(H) \setminus M)$.
        Also, $V(H')$ must contain vertices from $V(A_M) \setminus \{f_{ij}(z)\}$, because 
        $f_{ij}(V(H) \setminus (M \setminus \{z\}))$ is too small to contain $V(H')$, as 
        $|V(H) \setminus 
        (M \setminus \{z\})| < |V(H)|$.
        Now, the vertex $f_{ij}(z)$ separates the sets $V(A_M)$ and $f_{ij}(V(H) \setminus M)$ in $A$ (and hence also in $H'$, in the case that $H'$ contains vertices from $f_{ij}(V(H) \setminus M)$). Since $M$ is 2-connected and $A'_M$ is $M$-free, every copy of $M$ in $H'$ is contained in 
        $f_{ij}(V(H) \setminus (M \setminus \{z\}))$. But 
        $V(H) \setminus (M \setminus \{z\})$ clearly contains at most $|\mathcal{M}(H)| - 1$ copies of $M$ (because $H$ contains $M$ itself). This proves the \nolinebreak claim. 
    \end{proof}
    \noindent
    Claim \ref{claim:type 1, M-copies} contradicts the fact that $H'$ is isomorphic to $H$, completing the proof of the theorem. 
\end{proof}

\subsection{Proof of Theorem \ref{theorem:connected H, delta >= 2, type 2}}\label{subsec:type 2}
\begin{proof}[Proof of Theorem \ref{theorem:connected H, delta >= 2, type 2}]
The proof is similar to that of Theorem \ref{theorem:connected H, delta >= 2, type 1}, so we will be brief in some places. The proof is by induction on $|V(H)|$, and the base case is that $H$ is 2-connected (and hence $M = H$); this case is immediate, because the triple $s,a,b$ is assumed to be a good choice for $M = H$, and in this case $B(G,H,M,s,a,b) = A(G,H,s,a,b)$.

Suppose now that $H$ is not 2-connected, and let $M,z$ be as in the statement of the theorem.
Let $B = B(G,H,M,s,a,b)$, and recall that 
$B$ consists of $A := A(G,M,s,a,b)$ and, for each $i \in [n]$, of a copy $H_i$ of 
$N := H - (M \setminus \{z\})$ ``hanging" on the vertex $w_i \in A$, which plays the role of $z$ in this copy. Also, 
$g_i : N \rightarrow H_i$ denotes the corresponding isomorphism (so $g_i(z) = w_i$).
Let
$B' = B - \{s'w_i : \nolinebreak i \in \nolinebreak C\}$, and suppose by contradiction that $B'$ contains a copy $H'$ of $H$. 
Let $F$ be the subgraph of $H$ induced by the vertex set
$$
V(F) := M \cup \bigcup_{i \in [n]}
g_i^{-1}(V(H') \cap V(H_i)).
$$
\begin{claim}\label{claim:type 2, F properties}
$F$ is connected and $\delta(F) \geq 2$.
\end{claim}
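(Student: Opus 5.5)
The plan mirrors the proof of Claim \ref{claim:type 1, F properties}, now adapted to the hanging copies $H_i$. The key structural fact is the analogue of Fact \ref{fact:outside vertices} for $B$: every vertex of $V(H_i)\setminus\{w_i\}$ has all its neighbours inside $V(H_i)$. This is immediate from Construction \ref{construction:second construction}, since the copies $H_i$ are attached to the rest of $B$ only through $w_i$.

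\textbf{Connectivity.} Since $M \subseteq V(F)$ and $M$ is 2-connected, hence connected, it suffices to join every $v \in V(F)\setminus M$ to $M$ by a path inside $F$.
\begin{itemize}
\item Such a $v$ lies in $V(N)\setminus\{z\}$ (as $z \in M$), and $g_i(v) \in V(H')$ for some $i$; in particular $g_i(v) \neq w_i$.
\item We need $V(H') \not\subseteq V(H_i)\setminus\{w_i\}$. This holds by counting: $|V(H_i)| = |V(N)| = |V(H)| - |M| + 1 < |V(H)|$, because $|M| \geq 3$.
\item Hence $V(H')\not\subseteq V(H_i)$ and $H'$ is connected. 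By the fact above, the only exit from $V(H_i)$ is $w_i$, so $H'$ contains a path $Q$ from $g_i(v)$ to $w_i$ inside $H'[V(H_i)]$.
\item Then $g_i^{-1}(Q)$ is a path in $H$ from $v$ to $z \in M$. All its vertices lie in $g_i^{-1}(V(H')\cap V(H_i)) \subseteq V(F)$, and $F$ is induced, so the path lies in $F$.
\end{itemize}

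\textbf{Minimum degree.} Let $v \in V(F)$.
\begin{itemize}
\item If $v \in M$, then $d_F(v) \geq d_M(v) \geq 2$, since $M$ is 2-connected.
\item Otherwise $g_i(v) \in V(H')$ for some $i$, with $g_i(v) \neq w_i$. All neighbours of $g_i(v)$ in $B$ lie in $H_i$, and $\delta(H') \geq 2$. So $g_i(v)$ has at least two neighbours in $V(H') \cap V(H_i)$.
\item Applying the isomorphism $g_i^{-1}$ gives at least two neighbours of $v$ in $N$, all lying in $V(F)$. Since $F$ is an induced subgraph of $H$, we get $d_F(v) \geq 2$.
\end{itemize}

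\textbf{Where care is needed.} The only delicate point is the size argument showing that $H'$ is not contained in a single hanging copy. It relies on $M$ having at least two vertices other than $z$, which holds because $M$ is 2-connected, so $|M| \ge 3$. If instead $H'$ reaches $w_i$, the path argument goes through as above.
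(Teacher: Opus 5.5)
Your proof is correct and follows the paper's argument essentially step for step. For connectivity, the bound $|V(H_i)|<|V(H)|$ forces $H'$ to leave $H_i$ through $w_i=g_i(z)$, which gives a path to $z\in M$ that pulls back under $g_i^{-1}$. For the degree bound, you use that all neighbours of $g_i(v)$ lie in $H_i$, exactly as the paper does.
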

\begin{proof}
    We first show that $F$ is connected. By definition, we have $M \subseteq F$, and $M$ is connected. Hence, it suffices to show that every 
    $v \in V(F) \setminus M$ has a path in $F$ to some vertex of $M$. 
    By the definition of $F$, there must exist some 
    $i \in [n]$ such that $g_i(v) \in V(H')$ (because $v \in V(F) \setminus M$). 
    Note that $V(H') \not \subseteq V(H_i)$, since $|V(H_i)| < |V(H)| = |V(H')|$. Also, $w_i = g_i(z)$ is the only vertex of $H_i$ with neighbors outside of $H_i$. Since $H'$ is connected, it follows that there is a path $Q$ in $H'[V(H_i)]$ from $g_i(v)$ to $g_i(z)$. Then $g_i^{-1}(Q)$ is a path in $H$ from $v$ to $z \in M$, and $g_i^{-1}(Q) \subseteq g_i^{-1}(V(H') \cap V(H_i)) \subseteq V(F)$, as required. 
    
    Now we show that $d_F(v) \geq 2$ for every $v \in V(F)$. If $v \in M$ then this holds because $d_F(v) \geq d_M(v) \geq 2$. Now suppose that $v \in V(H) \setminus M$. Since $v \in V(F)$, there exists some $i \in [n]$ such that $g_i(v) \in V(H')$. We have $\delta(H') \geq 2$ (because $H'$ is a copy of $H$). Also, since $v \neq z$, 
    all neighbors of $g_i(v)$ (in the graph $B$) are inside $H_i$. Hence, 
    $|N(g_i(v)) \cap V(H') \cap V(H_i)| \geq 2$. 
    Applying $g_i^{-1}$ to the set 
    $N(g_i(v)) \cap V(H') \cap V(H_i)$, we get a set of at least two neighbors of $v$ in $F$, as required. 
\end{proof}
\noindent
As in the proof of Theorem \ref{theorem:connected H, delta >= 2, type 1}, we now consider the following two cases:
    
    \paragraph{Case 1:} $F \neq H$; namely, $V(F) \subsetneq V(H)$. In this case we apply the induction hypothesis. Note that all conditions of Theorem \ref{theorem:connected H, delta >= 2, type 2} are satisfied. Indeed, $F$ is connected and $\delta(F) \geq 2$ (by Claim \ref{claim:type 2, F properties}); $M$ is a leaf biconnected component of $F$, and if $F \neq M$ then $z$ separates $M$ from $V(F) \setminus M$; and $s,a,b \in M$ is a good choice for $M$ with $z \neq s$ and 
    $z \in \{a,b\}$. 
 
    Observe that the construction $B_F := B(G,F,M,s,a,b)$ can be identified with the subgraph of 
    $B = B(G,H,M,s,a,b)$ induced by 
    $$
    V(A) \cup
    \bigcup_{i \in [n]} g_i(V(F) \cap V(N)),
    $$
    where $A = A(G,M,s,a,b)$. 
    Moreover, by the definition of the graph $F$, it holds that 
    $V(H') \subseteq V(B_F)$. 
    Now, let $f : H \rightarrow H'$ be an isomorphism from $H$ to its copy $H'$, and let $F' := f(F)$ be the copy of $F$ in $H'$. Then $F'$ is a copy of $F$ in $B'_F := B_F - \{s'w_i : i \in C\}$, contradicting the induction hypothesis. This completes the proof in Case 1.

        \paragraph{Case 2:} $F = H$. 
    By the definition of $F$, this means that for each $v \in V(H) \setminus M$, there exists some $i \in [n]$ such that $g_i(v) \in V(H')$. 
    
    \begin{claim}\label{claim:unique-i-1-connected-case-2}
        There exists at most one $i \in [n]$ such that $g_i(V(H) \setminus M) \cap V(H') \neq \emptyset$.
    \end{claim}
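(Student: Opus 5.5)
We mirror the proof of Claim \ref{claim:unique-i-1-connected-case-1} and compare cut vertices. Let $I$ be the set of all $i \in [n]$ with $g_i(V(H) \setminus M) \cap V(H') \neq \emptyset$, and let $W$ be the set of cut vertices of $H$; note $z \in W$ and $z$ is the only cut vertex of $H$ lying in $M$. Assuming $|I| \geq 2$, we will exhibit at least $(|W|-1)+|I| > |W|$ distinct cut vertices of $H'$. This is impossible since $H' \cong H$.

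\emph{Step (i).} Fix $w \in W \setminus \{z\}$, so $w \in V(N) \setminus \{z\}$. Choose $p \in V(H)$ in a component of $H - w$ not containing $M$. Since $p \notin M$ and $F = H$, there is $i$ with $g_i(p) \in V(H')$. We have $V(H') \not\subseteq V(H_i)$ because $|V(H_i)| < |V(H)|$. Also $w_i = g_i(z)$ is the only vertex of $H_i$ with neighbours outside $H_i$, and $H'$ is connected. Hence every $g_i(p)$-to-outside path in $H'$ runs inside $H_i$ to $w_i$, and in particular $w_i \in V(H')$.

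Apply the argument of Lemma \ref{lemma:H'-contains-intermediate-vertices-generalized} with $k=1$ to the hanging copy $H_i$ in place of $H_{ij}$. This needs the analogue of Lemma \ref{claim:exists-subpath-between-vertices-in-same-copy} for $H_i$, which is trivial here: $H_i$ is attached to the rest of $B$ only through the single vertex $w_i$, so any path in $B$ between two vertices of $H_i$ already lies inside $H_i$. Since $w$ separates $p$ from $z$ in $N$, every $g_i(p),w_i$-path in $H'$ passes through $g_i(w)$. So $g_i(w) \in V(H')$ and $g_i(w)$ is a cut vertex of $H'$. The point is that $g_i(p)$ and $w_i$ both lie in $H'$, and $H'$ is 1-connected.

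\emph{Step (ii).} For each $i \in I$, pick $p \in V(H)\setminus M$ with $g_i(p) \in V(H')$. As above, $w_i \in V(H')$. Since $V(H') \not\subseteq V(H_i)$, the graph $H'$ also contains a vertex outside $V(H_i)$. Any path from $g_i(p)$ to such a vertex must pass through $w_i$. So $w_i$ separates $g_i(p)$ from $V(H')\setminus V(H_i)$ in $H'$, and $w_i = g_i(z)$ is a cut vertex of $H'$.

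\emph{Step (iii): distinctness.} The vertices from (i) are internal vertices of the hanging copies, that is, of $V(H_i)\setminus\{w_i\}$. The copies $H_i$ are pairwise vertex-disjoint and each $g_i$ is injective, so these vertices are pairwise distinct. They are also distinct from the vertices $w_i$ produced in (ii), and the $w_i$ for different $i \in I$ are distinct. This gives at least $|W|-1+|I|$ cut vertices of $H'$, so $|I| \leq 1$.

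The main obstacle is ensuring that the cut vertices produced are genuinely distinct and genuinely cuts of $H'$. Unlike in Section \ref{subsec:type 1}, where $z \notin \{s,a,b\}$ was needed, here $w_i$ may also be a vertex of some $M_{ij}$. This causes no problem, because distinctness only uses that the hanging copies are disjoint and that each $w_i$ belongs to the single copy $H_i$. In step (ii), the separation argument must use that $V(H')$ meets the complement of $V(H_i)$. That is exactly what the size bound $|V(H_i)| = |V(N)| < |V(H)|$ provides, since $M \setminus \{z\} \neq \emptyset$.
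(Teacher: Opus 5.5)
Your proof is correct and follows the paper's argument essentially step for step. You use the same set $I$, the same count of at least $(|W|-1)+|I|$ cut vertices of $H'$ via items (i) and (ii), and the same distinctness argument from the pairwise vertex-disjointness of the hanging copies $H_i$. Your observation that any path between two vertices of $H_i$ must stay inside $H_i$, because $w_i$ is its only attachment vertex, is exactly the step the paper uses inline in place of the subpath lemma.
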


    \begin{proof}
        Let $I$ be the set of all $i \in [n]$ such that 
        $g_i(V(H) \setminus M) \cap V(H') \neq \emptyset$. 
        To prove that $|I| \leq 1$, we will show that 
        if $|I| \geq 2$ then $H'$ contains more 1-cuts (i.e., cut vertices) than $H$, which is impossible as $H'$ is a copy of $H$. Let $W$ be the set of all $w \in V(H)$ which are cut vertices of $H$. In particular, $z \in W$. 
        We will prove the following two claims:
        \begin{enumerate}
            \item[(i)] For every $w \in W \setminus \{z\}$, there exists $i \in [n]$ such that $g_i(w)$ is a cut vertex of $H'$. 
            \item[(ii)] For every $i \in I$, 
            $g_i(z)$ is a cut vertex of $H'$. 
        \end{enumerate}
        Note that all cut vertices of $H'$ obtained by Items (i)-(ii) are distinct, because the copies $H_1,\dots,H_n$ are pairwise vertex-disjoint and the maps $g_i : N \rightarrow H_i$ are bijective.  
        Thus, combining (i) and (ii), we conclude that $H'$ has at least $(|W|-1) + |I|$ cut vertices. On the other hand, $H'$ has exactly $|W|$ cut vertices (since $H' \cong H$). Thus, $|I| \leq 1$, as required. 

        It remains to prove (i) and (ii). We begin with (i). Let 
        $w \in W \setminus \{z\}$, and let $p \in V(H)$ be such that $M$ and $p$ are in different components of $H - w$. Note that 
        $w \notin M$ (since $z$ is the only cut vertex of $H$ contained in $M$). Also, since $p \notin M$, there exists $i \in [n]$ such that 
        $g_i(p) \in V(H')$ (as explained above). 
        Note that 
        $V(H') \setminus V(H_i) \neq \emptyset$ (as $|V(H_i)| < |V(H')|$), and that $w_i = g_i(z)$ is the only vertex in $H_i$ with neighbors outside of $H_i$ (in the graph $B$). 
        Now, since $H'$ is connected, it follows that 
        $g_i(z) \in V(H')$. We claim that every $g_i(z),g_i(p)$-path in $B$ (and thus, in $H'$) contains $g_i(w)$. Indeed, let $Q$ be such a path. As $g_i(z)$ is the only vertex in $H_i$ with neighbors outside of $H_i$, we have $Q \subseteq H_i$ and so $g_i^{-1}(Q) \subseteq 
        H - (M \setminus \{z\}) \subseteq H$ is a $p,z$-path in $H$. But $w$ separates $p,z$ in $H$ (as $z \in M$), and so $w \in g_i^{-1}(Q)$, giving $g_i(w) \in Q$. Thus, $g_i(w)$ separates $g_i(p)$ from $g_i(z)$ in $H'$, as desired.

        For (ii), fix any $i \in I$. As in the previous paragraph, we have 
        $V(H') \setminus V(H_i) \neq \emptyset$, 
        and $w_i = g_i(z)$ separates $g_i(V(H) \setminus M)$ from $V(H') \setminus V(H_i)$. Since
        $g_i(V(H) \setminus M) \cap V(H') \neq \emptyset$ (as $i \in I$), it follows that $g_i(z)$ is a separator of $H'$, as required. 
\end{proof}

 We now use Claim \ref{claim:unique-i-1-connected-case-2} to obtain a contradiction. 
    Let $\mathcal{M}(H)$ denote the set of copies of $M$ in $H$, and let $\mathcal{M}(H')$ denote the set of copies of $M$ in $H'$. 
    \begin{claim}\label{claim:type 2, M-copies}
        $|\mathcal{M}(H)| > |\mathcal{M}(H')|$.
    \end{claim}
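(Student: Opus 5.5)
I will mirror the proof of Claim \ref{claim:type 1, M-copies}, with the hanging copies $H_i$ playing the role of the sets $f_{ij}(V(H)\setminus M)$.

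First, note that $A = A(G,M,s,a,b)$ is a subgraph of $B$, and $A' := A - \{s'w_i : i \in C\}$ is the subgraph of $B'$ induced on $V(A)$. Since $s,a,b$ is a good choice for $M$ and $G$ is a forest, $A'$ is $M$-free by Theorem \ref{theorem:2-connected H}. Hence no copy of $M$ in $H'$ has all its vertices in $V(A)$.

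Next, by Claim \ref{claim:unique-i-1-connected-case-2}, there is at most one index $i$ such that $H'$ meets $g_i(V(H)\setminus M)$. It follows that
$$V(H') \subseteq V(A) \cup g_i(V(H)\setminus M)$$
for this single $i$ (or $V(H') \subseteq V(A)$ if no such $i$ exists). In $B$, the vertex $w_i = g_i(z)$ separates $V(A)\setminus\{w_i\}$ from $g_i(V(H)\setminus M)$.

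Now let $M'$ be any copy of $M$ in $H'$. Since $M$ is 2-connected, $M' - w_i$ is connected. Therefore $M'$ lies either entirely in $V(A)$ or entirely in $V(H_i) = g_i(V(N))$. The first option is impossible by $M$-freeness of $A'$, so every copy of $M$ in $H'$ lies in $H_i$. Moreover, $H'[V(H_i)]$ is a subgraph of $H_i \cong N = H - (M\setminus\{z\})$. This gives
$$|\mathcal{M}(H')| \le |\mathcal{M}(N)|.$$
If no such $i$ exists, we get $|\mathcal{M}(H')| = 0$ directly.

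Finally, every copy of $M$ in $N$ is a copy of $M$ in $H$, and $M$ itself is a copy of $M$ in $H$ that is not contained in $N$, since $M\setminus\{z\}$ is nonempty and removed from $N$. Hence
$$|\mathcal{M}(N)| \le |\mathcal{M}(H)| - 1 < |\mathcal{M}(H)|,$$
which proves the claim.

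The main point needing care is the separation step. It requires that $w_i$ is the only vertex of $H_i$ with neighbours outside $H_i$, which holds by construction. It also uses that a 2-connected copy of $M$ cannot straddle a single cut vertex.
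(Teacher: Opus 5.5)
Your proposal is correct and follows the paper's proof essentially step for step. The steps are the same: $M$-freeness of $A'$ rules out copies of $M$ inside $V(A)$; Claim~\ref{claim:unique-i-1-connected-case-2} gives a single index $i$; the separation at $w_i=g_i(z)$, combined with the 2-connectivity of $M$, forces every copy of $M$ in $H'$ into $H_i\cong N$; and $|\mathcal{M}(N)|\le|\mathcal{M}(H)|-1$. Your version of the separation step, with $w_i$ separating $V(A)\setminus\{w_i\}$ from $g_i(V(H)\setminus M)$ and using that $M'-w_i$ is connected, is slightly more careful than the paper's wording.
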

    \begin{proof}
        First, since $s,a,b$ is a good choice for $M$, the graph 
        $A' := A - \{s' w_i : i \in C\}$ is $M$-free. 
        Hence, $H'$ has no copies of $M$ which are contained in $V(A') = V(A)$. Also, by Claim
        \ref{claim:unique-i-1-connected-case-2}, there is at most one 
        $i \in [n]$ such that $H'$ contains vertices from $g_i(V(H) \setminus M)$. 
        This means that 
        $V(H') \subseteq 
        V(A) \cup g_i(V(H) \setminus M)$.
        Note that $V(H') \not\subseteq g_i(V(H) \setminus M)$ because $|V(H) \setminus M| < |V(H)|$.
        Moreover, $g_i(z)$ separates the sets $V(A)$ and 
        $g_i(V(H) \setminus M)$ in $B$ (and hence also in $H'$, in the case that $H'$ contains vertices from $g_i(V(H) \setminus M)$). Since $M$ is 2-connected, every copy of $M$ in $H'$ is contained in 
        $g_i(V(H) \setminus (M \setminus \{z\}))$. But 
        $V(H) \setminus (M \setminus \{z\})$ clearly contains at most $|\mathcal{M}(H)| - 1$ copies of $M$ (because $H$ contains $M$ itself). This proves the claim. 
    \end{proof}
    \noindent
    Claim \ref{claim:type 2, M-copies} contradicts the fact that $H'$ is isomorphic to $H$, completing the proof of the theorem. 
\end{proof}



\section{Putting it all together}\label{sec:main}
In this section we prove Theorem \ref{thm:main}. We start with the case that $H$ is connected and $\delta(H) \geq 2$. This proof combines all of the results of the previous sections.

\begin{proof}[Proof of Theorem \ref{thm:main} for connected $H$ with $\delta(H) \geq 2$]

We will establish a reduction from the vertex cover problem for graphs with girth larger than $2v(H)$ to the problem of computing $\Rem_H(\cdot)$.

Let $M$ be a leaf biconnected component of $H$, and, if $M \neq H$, let $z$ be the cut vertex separating $M$ from $V(H) \setminus M$. Let $s,a,b \in M$ be a good choice for $M$ with $s \neq z$ (if $z$ is defined); such a choice exists by Theorem \ref{theorem:2-connected H}. We consider the following two cases:

\paragraph{Case 1:} $H = M$ or 
$z \notin \{a,b\}$. Let $G$ be an input to the vertex-cover problem, where 
$V(G) = [n]$ and $\girth(G) > 2v(H)$. Let 
$A := A(G,H,s,a,b)$ be given by Construction \ref{construction:general-construction}. 
Note that $v(A) \leq 1+n+e(G) \cdot (v(H)-3) = \poly(n)$.
We claim that $\Rem_H(A) = \tau(G)$. First, Lemma \ref{lemma:easy-inequality} gives 
$\Rem_H(A) \geq \tau(G)$. In the other direction, let $C$ be a vertex-cover of $G$ of size $|C| = \tau(G)$. 
We claim that $A' := A - \{s'w_i : i \in C\}$ is $H$-free. This would imply that $\Rem_H(A) \leq |C| = \tau(G)$, as required. So suppose by contradiction that $A'$ contains a copy $H'$ of $H$. By Lemma \ref{thm:g-locally-a-forest}, there is a subgraph $F \subseteq G$ with 
$v(F) \leq 2v(H)$
such that 
$H'$ is a subgraph of 
$A_F = A(F, H, s, a, b)$. 
(Recall that we identify $A_F$ with the subgraph of $A$ defined in Fact \ref{fact:subgraph}.)
In fact, $H'$ is a subgraph of 
$A'_F := A_F - \{s'w_i : i \in C \cap V(F)\}$, because $H'$ is a subgraph of $A'$. 
Also, $F$ is a forest (because $\girth(G) > 2v(H)$). 
But $C \cap V(F)$ is a vertex-cover of $F$, and so $A'_F$ is $H$-free, by Theorem \ref{theorem:connected H, delta >= 2, type 1}. This contradiction completes the proof in Case 1.

\paragraph{Case 2:} $z \in \{a,b\}$. Let $G$ be an input to the vertex-cover problem with 
$V(G) = [n]$ and $\girth(G) > 2v(H)$. Let 
$B := B(G,H,M,s,a,b)$ be given by Construction \ref{construction:second construction}. 
Note that 
$$
v(B) \leq 1+n+e(G) \cdot \nolinebreak (v(H)-3) + n \cdot v(H) = \poly(n).
$$
We claim that $\Rem_H(B) = \tau(G)$. First, Lemma \ref{lemma:easy-inequality, type 2} gives 
$\Rem_H(B) \geq \tau(G)$. In the other direction, let $C$ be a vertex-cover of $G$ of size $|C| = \tau(G)$. 
We claim that $B' := B - \{s'w_i : i \in C\}$ is $H$-free. This would imply that $\Rem_H(B) \leq |C| = \tau(G)$, as required. 
So suppose by contradiction that $B'$ contains a copy $H'$ of $H$. By Lemma \ref{thm:g-locally-a-forest, type 2}, there is a subgraph 
$F \subseteq G$ with 
$v(F) \leq 2v(H)$
such that 
$H'$ is a subgraph of 
$B_F := B(F,H,M,s,a,b)$. (Recall that we identify $B_F$ with the subgraph of $B$ defined in Fact \ref{fact:subgraph, type 2}.)
In fact, $H'$ is a subgraph of
$B'_F := B_F - \{s'w_i : i \in C \cap V(F)\}$, because $H'$ is a subgraph of $B'$. 
Also, $F$ is a forest (because $\girth(G) > 2v(H)$).
But $C \cap V(F)$ is a vertex-cover of $F$, and so 
$B'_F$ is $H$-free, by Theorem \ref{theorem:connected H, delta >= 2, type 2}. This contradiction completes the proof in Case 2, and hence the whole proof of the theorem.
\end{proof}

\noindent
In the rest of this section, we first prove Theorem \ref{thm:main} for all connected graphs $H$ containing a cycle (Section \ref{subsec:connected, delta=1}) and then finally in full generality (Section \ref{subsec:disconnected}).

\subsection{Theorem \ref{thm:main} for connected graphs $H$}\label{subsec:connected, delta=1}

Here we prove Theorem \ref{thm:main} for every connected graph $H$ containing a cycle. We already proved the theorem in the case that $\delta(H) \geq 2$, and we will now reduce the general case to this one. The key notion is that of a {\em 2-core} of a graph. 

\begin{definition}[2-core]\label{def:2-core}
The {\em 2-core} of a graph $H$, denoted $H_*$, is the maximal subgraph of $H$ with minimum degree at least 2.
\end{definition}

\noindent
It is easy to see (and well-known) that the 2-core is unique, and it can be obtained from $H$ by repeatedly deleting vertices of degree at most 1 until no such vertices remain. Also, if $H$ is connected and $H_*$ is non-empty then $H_*$ is also connected; and $H_*$ is non-empty if and only if $H$ contains \nolinebreak a \nolinebreak cycle. 

\begin{lemma}\label{lemma:min-degree-2-contained-in-2-core}
    If a graph $H$ is a subgraph of a graph $G$ and 
    $\delta(H) \geq 2$, then $H$ is a subgraph of $G_*$.
\end{lemma}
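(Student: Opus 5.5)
The plan is to argue directly from the maximality in Definition \ref{def:2-core}, rather than from the peeling procedure. I would first make precise that the 2-core $G_*$ is the union of all subgraphs of $G$ with minimum degree at least $2$. Let $G_*$ be any maximal subgraph of $G$ with $\delta(G_*) \geq 2$, and suppose $H \subseteq G$ satisfies $\delta(H) \geq 2$. Consider the union $G_* \cup H$, meaning the graph with vertex-set $V(G_*) \cup V(H)$ and edge-set $E(G_*) \cup E(H)$, which is again a subgraph of $G$.

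The key step is to check that $\delta(G_* \cup H) \geq 2$. Take any vertex $v$ of the union. If $v \in V(G_*)$, then $d_{G_* \cup H}(v) \geq d_{G_*}(v) \geq 2$. Otherwise $v \in V(H)$, and then $d_{G_* \cup H}(v) \geq d_H(v) \geq 2$. In both cases the degree only increases when passing to the union, because every edge of either graph is kept.

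Maximality of $G_*$ then forces $G_* \cup H = G_*$, so $H \subseteq G_*$. The same union argument also shows that the maximal subgraph with minimum degree at least $2$ is unique: two such maximal subgraphs have a union with the same property, so each equals the union. This confirms that $G_*$ is well defined as used in the statement.

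The only care needed is in interpreting ``maximal'' as maximal with respect to the subgraph relation, covering both vertices and edges. Under that reading no real obstacle is expected.

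If one prefers the peeling characterization instead, I would argue by induction on the number of deletions that no vertex of $H$ is ever removed. A vertex $v$ of $H$ keeps all its $H$-neighbours while none of them has been removed, so its degree stays at least $d_H(v) \geq 2$ and it is never deleted.
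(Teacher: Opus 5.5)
Your proposal is correct and follows essentially the same argument as the paper: take the union $G_* \cup H$, observe that degrees only increase in the union so its minimum degree is at least $2$, and conclude from the maximality of $G_*$. The paper phrases this by contradiction while you argue directly; your remarks on uniqueness and on the peeling characterization are correct extras that the paper only mentions in passing.
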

\begin{proof}
    If not, then consider the graph $H \cup G_*$. This is a strict supergraph of $G_*$, and hence has $\delta(H \cup G_*) \leq 1$ (by the maximality of $G_*$). But every vertex of $G_*$ has degree $\geq 2$ in $G_*$ and hence in 
    $H \cup G_*$. Therefore, there exists 
    $v \in V(H)$ with $d_H(v) \leq d_{H \cup G_*}(v) \leq 1$, contradicting $\delta(H) \geq 2$.
\end{proof}

\begin{theorem}\label{theorem:2-core reduction}
Let $H$ be a connected graph containing a cycle and let $H_*$ be its 2-core. Then computing $\Rem_{H_*}(\cdot)$ is polynomially reducible to computing $\Rem_H(\cdot)$.
\end{theorem}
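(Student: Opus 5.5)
We reduce by attaching pendant trees to every vertex of the input graph. Given an input $G$ for $\Rem_{H_*}(\cdot)$, we build $G^+$: for every vertex $v \in V(G)$, attach $v(H)$ vertex-disjoint copies of every rooted tree that appears hanging off $H_*$ in $H$, so that every vertex of $G$ can play the role of any vertex of $H_*$. Concretely, $H$ is $H_*$ together with trees $T_u$ rooted at the vertices $u \in V(H_*)$; we can take $T_u$ to be the component containing $u$ of $H$ minus the edges of $H_*$. Let $T$ be the rooted tree obtained by gluing all the $T_u$, $u \in V(H_*)$, at their roots. To each $v \in V(G)$ we attach $m := e(G)+1$ disjoint copies of $T$ rooted at $v$, with all other vertices new. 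The size of $G^+$ is polynomial in $v(G)$ for fixed $H$. We claim that
\[
\Rem_H(G^+) = \Rem_{H_*}(G).
\]

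For the upper bound, let $S \subseteq E(G)$ be such that $G - S$ is $H_*$-free. Then $G^+ - S$ has 2-core contained in $G - S$, because the pendant trees contribute only vertices that are peeled off. Every copy of $H$ contains a copy of $H_*$ with minimum degree at least 2, so by Lemma \ref{lemma:min-degree-2-contained-in-2-core} that copy lies in $(G^+ - S)_* \subseteq G - S$. This is impossible, so $\Rem_H(G^+) \le \Rem_{H_*}(G)$.

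For the lower bound, let $S \subseteq E(G^+)$ be such that $G^+ - S$ is $H$-free, and let $S_0 := S \cap E(G)$. We may assume $|S| \le \Rem_{H_*}(G) \le e(G) < m$. Suppose $G - S_0$ contains a copy $K$ of $H_*$. For each $u \in V(H_*)$, the image vertex $\phi(u)$ has $m$ disjoint attached copies of $T$, and fewer than $m$ of these meet $S$, so some copy of $T$ at $\phi(u)$ is untouched. Inside that untouched copy we use the branch $T_u$. The branches used at distinct vertices of $K$ are disjoint from each other and from $V(G)$, so together with $K$ they form a copy of $H$ in $G^+ - S$, a contradiction. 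Hence $G - S_0$ is $H_*$-free, and $|S| \ge |S_0| \ge \Rem_{H_*}(G)$.

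The main point needing care is the precise definition of the attached trees. The decomposition of $H$ into $H_*$ plus trees, each hanging at a single vertex of $H_*$, holds because $H$ is connected and contains a cycle, so $H_*$ is nonempty and connected and every vertex outside it lies in a unique tree attached at one core vertex. Attaching all the $T_u$ at every vertex is harmless: the added material is acyclic and pendant, so it cannot create new copies of $H_*$.
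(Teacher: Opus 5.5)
Your proof is correct and is essentially the paper's argument: you attach $e(G)+1$ pendant copies of the hanging trees at every vertex of $G$, get the upper bound from Lemma \ref{lemma:min-degree-2-contained-in-2-core} via $(G^+ - S)_* \subseteq G - S$, and get the lower bound because a minimum deletion set has size at most $e(G)$ and therefore leaves some attached copy untouched at every vertex. The differences are cosmetic: you glue all the $T_u$ into one rooted tree $T$ instead of attaching each tree separately, and the opening phrase about attaching $v(H)$ copies should be dropped, since your argument uses (and needs) $m = e(G)+1$ copies.
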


\begin{corollary}\label{cor:connected H}
    Theorem \ref{thm:main} holds for all connected graphs $H$ containing a cycle. 
\end{corollary}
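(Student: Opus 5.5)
The corollary follows by combining Theorem \ref{theorem:2-core reduction} with the already-established case of connected graphs of minimum degree at least 2. Let $H$ be a connected graph containing a cycle, and let $H_*$ be its 2-core.

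First, we verify that $H_*$ satisfies the hypotheses of the case already proved. Since $H$ contains a cycle, $H_*$ is non-empty. Since $H$ is connected, $H_*$ is connected, as noted after Definition \ref{def:2-core}. By definition, $\delta(H_*) \geq 2$. Therefore the proof of Theorem \ref{thm:main} for connected graphs with minimum degree at least $2$ applies to $H_*$, and computing $\Rem_{H_*}(\cdot)$ is NP-hard.

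Second, Theorem \ref{theorem:2-core reduction} gives a polynomial-time reduction from computing $\Rem_{H_*}(\cdot)$ to computing $\Rem_H(\cdot)$. Composing this reduction with the NP-hardness of computing $\Rem_{H_*}(\cdot)$ shows that computing $\Rem_H(\cdot)$ is NP-hard.

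If $\delta(H) \geq 2$, then $H_* = H$ and the statement is just the earlier case. No further argument is needed, since all the real content lies in Theorem \ref{theorem:2-core reduction}. The only points to check are the standard facts that the 2-core of a connected graph with a cycle is non-empty and connected, and these were recorded earlier in the paper.
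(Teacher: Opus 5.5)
Your proposal is correct and follows the same route as the paper: the 2-core $H_*$ is nonempty, connected and has $\delta(H_*)\geq 2$, so the already-proved case makes $\Rem_{H_*}(\cdot)$ NP-hard. Theorem~\ref{theorem:2-core reduction} then transfers this hardness to $\Rem_H(\cdot)$.
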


\begin{proof}[Proof of Theorem \ref{theorem:2-core reduction}]
Recall that $H_*$ is obtained from $H$ by deleting vertices of degree at most 1, as long as such vertices exist. Hence, $H$ is obtained from $H_*$ by ``gluing" certain trees on the vertices of $H^*$. More precisely, writing $V(H_*) = \{v_1,\dots,v_h\}$, there are rooted trees $T_1,\dots,T_k$ and a function $\varphi : [k] \rightarrow [h]$ such that $H$ is obtained from $H_*$ by adding, for each $i \in [k]$, a copy of $T_i$ whose root is $v_{\varphi(i)}$ and whose other vertices are new. 

Let $G$ be an $n$-vertex input for the problem $\Rem_{H_*}(\cdot)$. Let $G'$ be the graph obtained from $G$ by gluing $e(G)+1$ copies of $T_i$ at every vertex of $G$, for every $i \in [k]$. Then 
$$
v(G') \leq n + n \cdot (e(G)+1) \cdot \sum_{i=1}^k v(T_i) = O(n^3) = \poly(n).
$$
The following proves the validity of this reduction.
\begin{claim}\label{claim:np-hard-min-degree-1}
        $\Rem_{H}(G') = \Rem_{H_*}(G)$.
    \end{claim}
    
    \begin{proof}
        Let $E \subseteq E(G)$ be a subset of edges such that $G - E$ is $H_*$-free. We claim that $G' - E$ is $H$-free. Indeed, suppose for the sake of contradiction that $H'$ is a copy of $H$ in $G' - E$. 
        Let $H_*'$ denote the copy of $H_*$ in $H'$. 
        By Lemma \ref{lemma:min-degree-2-contained-in-2-core}, $H_*'$ is contained in the 2-core $G_*'$ of $G'$ (because $\delta(H_*') \geq 2$). But since $G'$ is obtained from $G$ by gluing trees, it holds that 
        $G_*' \subseteq G$. Hence, $H_*'$ is a subgraph of $G$, and thus of $G-E$. But $G-E$ is $H_*$-free, a contradiction.  
        This shows that 
        \begin{equation}\label{eq:2-core reduction, easy direction}
        \Rem_{H}(G') \leq \Rem_{H_*}(G)
        \end{equation} 
        
        To show the other inequality, let 
        $E' \subseteq E(G')$ be a 
        minimum-size subset of $E(G')$ such that $G' - E'$ is $H$-free. 
        Crucially, note that $|E'| \leq \Rem_{H_*}(G) \leq e(G)$, due to \eqref{eq:2-core reduction, easy direction}.
        We claim that $G-E'$ is $H_*$-free. So suppose otherwise, and let $H_*'$ be a copy of $H_*$ in $G-E'$. Consider any $v \in H_*'$ and any $i \in [k]$. Since $|E'| \leq e(G)$, one of the $e(G)+1$ copies of $T_i$ which were glued on $v$ contains no edges from $E'$. Hence, we can choose copies of $T_1,\dots,T_k$ (each glued on an appropriate vertex $v \in V(H_*')$) to obtain a copy of $H$ in $G'-E'$, a contradiction to the choice of $E'$. This proves our claim that $G-E'$ is $H_*$-free. It follows that 
        $\Rem_{H_*}(G) \leq |E'| = \Rem_{H}(G')$, as required. 
\end{proof}

\noindent
Claim \ref{claim:np-hard-min-degree-1} completes the proof of the theorem.
\end{proof}

\subsection{Disconnected graphs}\label{subsec:disconnected}

So far we proved Theorem \ref{thm:main} for all connected graphs $H$ (containing a cycle). Here we extend this to disconnected graphs as well, by showing, in a sense, that a graph $H$ is at least as hard as its hardest connected component. 
This is essentially a known reduction, and we follow the presentation from \cite{gishboliner2024trimming}. For a graph $G$ and integer $k$, denote by $kG$ the disjoint union of $k$-many copies of $G$. 
The following lemma is equivalent to 
\cite[Lemma 5.1]{gishboliner2024trimming}.\footnote{Lemma 5.1 in \cite{gishboliner2024trimming} was formulated in terms of the parameter
$\ex(G, H) := e(G) - \Rem_H(G)$. In other words, $\ex(G,H)$ is the maximum number of edges in an $H$-free spanning subgraph of $G$. It is easy to see that Lemma \ref{lemma:np-hard-containing-cycle-1} is equivalent.}

\begin{lemma}\label{lemma:np-hard-containing-cycle-1}
    Let $C$ be a connected graph with $v(C) \geq 2$, and let $k \geq 2$. Then, for every graph $G$, 
    $$
    \Rem_{kC}(kG) = \Rem_{kC}((k - 1)G) + 
    \Rem_{C}(G).
    $$
\end{lemma}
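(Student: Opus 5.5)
We must prove $\Rem_{kC}(kG) = \Rem_{kC}((k-1)G) + \Rem_C(G)$ for connected $C$ with $v(C)\ge 2$. Write $kG = G_1 \cup \dots \cup G_k$ (disjoint copies of $G$). The key observation is that since $C$ is connected, every copy of $C$ in any graph lies inside a single connected component, and hence inside a single copy $G_i$. Consequently a subgraph $K \subseteq kG$ (or of $(k-1)G$) contains $kC$ if and only if one can pick $k$ vertex-disjoint copies of $C$ in it, distributed among the $G_i$'s. For a subgraph $G_i - E_i$ let $\nu(G_i - E_i)$ denote the maximum number of vertex-disjoint copies of $C$ in it; then $kG - \bigcup E_i$ is $kC$-free iff $\sum_i \nu(G_i - E_i) \le k-1$. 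So both sides become optimization problems over ``budget profiles'': let $r_t := $ minimum number of edges to delete from $G$ so that $\nu \le t$ (so $r_0 = \Rem_C(G)$, and $r_t$ is nonincreasing in $t$, with $r_t=0$ for $t \ge \nu(G)$). Then
\[
\Rem_{kC}(mG) = \min\Big\{ \sum_{i=1}^m r_{t_i} : t_i \ge 0,\ \sum_{i=1}^m t_i \le k-1 \Big\}.
\]

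\textbf{Upper bound} ($\le$): take an optimal deletion set for $(k-1)G$ realizing profile $(t_1,\dots,t_{k-1})$ with $\sum t_i \le k-1$, and in the $k$-th copy delete an optimal $C$-freeness set (cost $r_0=\Rem_C(G)$, $t_k = 0$). The total profile still sums to at most $k-1$, so the result is $kC$-free.

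\textbf{Lower bound} ($\ge$), the main step: given an optimal profile $(t_1,\dots,t_k)$ for $kG$ with $\sum t_i \le k-1$, by pigeonhole some $t_j = 0$ (there are $k$ nonnegative integers summing to at most $k-1$). That copy contributes $r_0 = \Rem_C(G)$, and the remaining $k-1$ copies have profile summing to at most $k-1$, hence cost at least $\Rem_{kC}((k-1)G)$. This gives the inequality.

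The only step needing care is the reformulation: justifying that $kG - E$ is $kC$-free iff $\sum_i \nu(G_i - E_i) \le k-1$ (using connectivity of $C$ so each copy lies in one $G_i$, and that disjoint copies in different $G_i$ are automatically disjoint), and that deletions decouple across components so the minimum splits as a sum of the per-copy $r_{t_i}$. The hypothesis $v(C)\ge 2$ only ensures $C$ has an edge so that $\Rem$ is well defined; I expect no real obstacle beyond stating the pigeonhole step cleanly.
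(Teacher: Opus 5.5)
Your proof is correct. The paper gives no proof of this lemma. It imports the statement from \cite[Lemma 5.1]{gishboliner2024trimming}, where it is phrased as $\ex(kG,kC)=\ex((k-1)G,kC)+\ex(G,C)$ with $\ex(G,H)=e(G)-\Rem_H(G)$. That is the same identity, because $e(kG)=e((k-1)G)+e(G)$. So you have given a self-contained argument where the paper relies on a citation.

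Your reduction to $\nu$ and the cost profile $r_t$ is sound. Connectivity of $C$ enters exactly in $\nu(K)=\sum_i\nu(K\cap G_i)$. The profile formulation also buys more than needed: a formula for $\Rem_{kC}(mG)$ for every $m$.

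The lemma itself only needs your pigeonhole step, which can be stated without the machinery. If $E=E_1\cup\dots\cup E_k$ makes $kG$ $kC$-free, then some $G_j-E_j$ is $C$-free, since otherwise one copy of $C$ from each $G_i$ gives $kC$. Meanwhile $\bigcup_{i\neq j}(G_i-E_i)$ is a subgraph of $kG-E$, hence $kC$-free, and it is $(k-1)G$ with $|E\setminus E_j|$ edges removed. This gives $\geq$ without using connectivity at all. Connectivity is needed only for $\leq$: to see that the disjoint union of a $kC$-free $(k-1)G-E''$ and a $C$-free $G-E_0$ is still $kC$-free, because each of the $k$ copies of $C$ in a hypothetical $kC$ would have to lie entirely in the first part.
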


\begin{lemma}\label{lemma:np-hard-containing-cycle-2}
    For each connected $H$ containing a cycle and every $k \geq 1$, computing $\Rem_{kH}( \cdot )$ is NP-hard.
\end{lemma}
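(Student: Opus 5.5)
We argue by induction on $k$. The base case $k=1$ is Corollary \ref{cor:connected H}: computing $\Rem_H(\cdot)$ is NP-hard for every connected $H$ containing a cycle.

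For the inductive step, let $k \geq 2$ and assume that computing $\Rem_{(k-1)H}(\cdot)$ is NP-hard. We will instead reduce from $\Rem_H(\cdot)$ directly, using Lemma \ref{lemma:np-hard-containing-cycle-1} with $C := H$, which is connected with $v(H) \geq 3 \geq 2$. Given an input $G$ for $\Rem_H(\cdot)$, the reduction forms the two graphs $kG$ and $(k-1)G$. Both have polynomial size, since $k$ is a constant depending only on $H$. It then queries an oracle for $\Rem_{kH}(\cdot)$ on each graph and outputs
\[
\Rem_H(G) = \Rem_{kH}(kG) - \Rem_{kH}((k-1)G),
\]
which is exactly the identity of Lemma \ref{lemma:np-hard-containing-cycle-1}.

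This gives a polynomial-time Turing reduction (two oracle calls) from $\Rem_H(\cdot)$ to $\Rem_{kH}(\cdot)$. Since the former is NP-hard by Corollary \ref{cor:connected H}, so is the latter. The induction therefore collapses: we never need the hypothesis for $(k-1)H$, and the argument applies for every $k \geq 2$ directly, with $k=1$ handled by the corollary.

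There is one subtlety, and it is the only real obstacle. We need NP-hardness under Turing reductions rather than many-one reductions, and the whole paper should be read with that notion. Using two oracle calls and a subtraction is standard for optimization problems.

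If a single-call reduction is preferred, an alternative is available. One would show that $\Rem_{kH}((k-1)G)$ is easy to compute, or find some other trick, but this is not needed. All the substantive combinatorial content sits inside Lemma \ref{lemma:np-hard-containing-cycle-1}, which we are allowed to assume.
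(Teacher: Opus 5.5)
Your argument is correct and is essentially the paper's proof. The case $k=1$ is Corollary \ref{cor:connected H}, and for $k \geq 2$ the identity $\Rem_H(G) = \Rem_{kH}(kG) - \Rem_{kH}((k-1)G)$ from Lemma \ref{lemma:np-hard-containing-cycle-1} with $C := H$ gives a polynomial-time (Turing) reduction from $\Rem_H(\cdot)$ to $\Rem_{kH}(\cdot)$. The induction framing is superfluous, as you noticed, and the closing speculation about a single-call reduction should be dropped: $\Rem_{kH}((k-1)G)$ need not be easy in general, since $(k-1)G$ may well contain $kH$.
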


\begin{proof}
    The case $k = 1$ follows by Corollary \ref{cor:connected H}, so suppose $k \geq 2$. By Lemma \ref{lemma:np-hard-containing-cycle-1}, if we could compute $\Rem_{kH}(G)$ in polynomial time for every graph $G$, then we could compute $\Rem_H(G)$ in polynomial time for every graph $G$, as
    $$
    \Rem_{H}(G) = \Rem_{kH}(kG) - 
    \Rem_{kH}((k - 1)G).
    $$
    But computing $\Rem_{H}(G)$ is NP-hard by Corollary \ref{cor:connected H}.
\end{proof}

\begin{proof}[Proof of Theorem \ref{thm:main}]
    Let $H$ be a graph containing a cycle, 
    and let $C_1, \dots, C_{\ell}$ be the connected components of $H$. Suppose without loss of generality that $C_1$ contains a cycle and contains the largest number of edges among the components which contain a cycle. By permuting the indices, we can also assume that $C_1, \dots, C_k$ are isomorphic to $C_1$, and 
    $C_{k + 1}, \dots, C_{\ell}$ are not isomorphic to $C_1$ (for some $1 \leq k \leq \ell$). Note that $C_1$ is not a subgraph of $C_i$ for any $i \geq k+1$, because 
    either $C_i$ has no cycles (and then clearly $C_1 \not\subseteq C_i$), or else 
    $e(C_i) \leq e(C_1)$ and $C_i$ is not isomorphic to $C_1$. 
    
    We reduce the problem of computing 
    $\Rem_{kC_1}( \cdot )$ to the problem of computing $\Rem_{H}( \cdot )$, the former being NP-hard by Lemma \ref{lemma:np-hard-containing-cycle-2}. Let $G$ be an input graph with $n$ vertices. Let $G'$ be the graph obtained from $G$ by adding to it, for each $k + 1 \leq i \leq \ell$, a collection of $e(G)+1$ disjoint copies of $C_i$ which are also disjoint from $G$. 
    Then $v(G') \leq n + (e(G)+1) \cdot v(H) = O(n^2) = \poly(n)$.
    We will show that
    $\Rem_{H}(G') = \Rem_{kC_1}(G),$
    which will prove the correctness of the reduction. Observe that if we destroy all copies of $kC_1$ in $G'[V(G)]$ then the resulting subgraph of $G'$ is $kC_1$-free, since $C_1$ is connected and none of $C_{k + 1}, \dots, C_{\ell}$ contain $C_1$ as a subgraph. In particular, this gives an $H$-free subgraph of $G'$, showing that 
    $\Rem_{H}(G') \leq \Rem_{kC_1}(G).$
    Note that in particular,
    $\Rem_{H}(G') \leq e(G)$. 

    To show the other direction, let $E' \subseteq E(G')$ be a minimum-size edge-set such that $G'-E'$ is $H$-free. By the above, $|E'| \leq e(G)$. Hence, for each 
    $k + 1 \leq i \leq \ell$, $E'$ misses (at least) one of the $e(G)+1$ disjoint copies of $C_i$ added to $G'$. But then $G - E'$ must be $kC_1$-free, because otherwise $G'-E'$ would contain a copy of $H$. 
    Hence, 
    $\Rem_{H}(G') = |E'| \geq \Rem_{kC_1}(G)$
    as desired, completing the proof.
\end{proof}

\paragraph{Acknowledgments:}
The first author thanks Asaf Shapira and Yevgeny Levanzov for very useful conversations on the topic of this paper.

\bibliographystyle{abbrv}
\bibliography{bibfile}

\end{document}